\title{Orientals as free algebras}
\author{Dimitri Ara}
\address{Dimitri Ara, Aix~Marseille~Univ,~CNRS,~I2M,~Marseille,~France}
\email{dimitri.ara@univ-amu.fr}
\author{Yves Lafont}
\address{Yves Lafont, Aix~Marseille~Univ,~CNRS,~I2M,~Marseille,~France}
\email{yves.lafont@univ-amu.fr}
\author{Fran\c{c}ois M\'etayer}
\address{Fran\c{c}ois M\'etayer, Universit\'e Paris Cit\'e, CNRS, IRIF, F-75013, Paris, France \&
Universit\'e Paris Nanterre}
\email{metayer@irif.fr}
\subjclass{18N30, 18N50, 55U10}
\keywords{augmented directed complexes, cones, expansions, polygraphs,
orientals, simplices, strict \oo-categories}
\newcommand\letenv[2]{%
\expandafter\expandafter\expandafter\let\expandafter\expandafter
\csname #1\endcsname\csname #2\endcsname
\expandafter\expandafter\expandafter\let\expandafter\expandafter
\csname end#1\endcsname\csname end#2\endcsname
}
\theoremstyle{plain}
\newtheorem{theorem}{Theorem}[subsection]
\newtheorem{proposition}[theorem]{Proposition}
\newtheorem{lemma}[theorem]{Lemma}
\newtheorem{corollary}[theorem]{Corollary}
\theoremstyle{definition}
\newtheorem{definition}[theorem]{Definition}
\newtheorem{remark}[theorem]{Remark}
\newtheorem{remarks}[theorem]{Remarks}
\newtheorem{example}[theorem]{Example}
\newtheorem{examples}[theorem]{Examples}
\let\paragraph\undefined
\newtheorem{paragraph}[theorem]{}
\newcommand\ndef\emph
\newcommand\nbd\nobreakdash
\newcommand\oo{$\omega$\nbd}
\newcommand\asterism\medskip
\newcommand \myspace {\vspace{1.5ex}}
\newcommand\e\epsilon
\newcommand \set[1] {\{#1\}}
\newcommand \setbis[2] {\left{#1\; |\; #2\right}}
\renewcommand \emptyset \varnothing
\newcommand \sing 1
\newcommand\N{\mathbb{N}}
\newcommand\Z{\mathbb{Z}}
\newcommand \Sing {\mathbf 1}
\newcommand \Set {\mathbf{Set}}
\newcommand \Cat {\mathbf{Cat}}
\newcommand \sketch \Sigma
\newcommand \esketch {\sketch_e}
\newcommand \Mod[1] {\operatorname{Mod}(#1)}
\newcommand{\scat}{\Delta}
\newcommand{\sset}{\widehat{\scat}}
\newcommand {\nrv}{N} 
\newcommand\deltan[1]{\Delta_{#1}} 
\newcommand{\nglob}[1]{\mathbf{Glob}_{#1}}
\newcommand{\oglob}{\nglob{\omega}}
\newcommand{\ncat}[1]{\mathbf{Cat}_{#1}}
\newcommand \ncatpl[1] {\ncat{#1}^{+}} 
\newcommand{\ocat}{\ncat{\omega}}
\newcommand\ooCat\ocat
\newcommand{\ocatpt}{\ncat{\omega, \ast}} 
\newcommand{\npol}[1]{\mathbf{Pol}_{#1}}
\newcommand{\opol}{\npol{\omega}}
\newcommand{\doubl}[2]{\ar@<2pt>[l]^{#2}\ar@<-2pt>[l]_{#1}}
\newcommand{\doubr}[2]{\ar@<2pt>[r]^{#1}\ar@<-2pt>[r]_{#2}}
\newcommand{\Doubl}{\doubl{}{}}
\newcommand{\doubld}[2]{\ar@<2pt>[ld]^{#2}\ar@<-2pt>[ld]_{#1}}
\newcommand{\DOUBL}{\ar@<2pt>[l]^(.4){\SCE}\ar@<-2pt>[l]_(.4){\TGE}}
\newcommand{\DOUBLD}{\ar@<2pt>[ld]^(.4)*-<4pt>{\scriptstyle{\SCE}}\ar@<-2pt>[ld]_(.4)*-<4pt>{\scriptstyle{\TGE}}}
\newcommand \Right[1] {U_{#1}}
\newcommand \Left[1] {L_{#1}}
\newcommand \Free[1] {F_{#1}}
\newcommand \oFree {\Free \omega}
\newcommand \free[1] {#1^{\ast}}
\newcommand \comp[1] {\ast_{#1}}
\newcommand \unit[1]{1_{#1}}
\newcommand \Unit[2] {1^{#1}_{#2}}
\newcommand{\adjd}[2]{\ar@<1ex>[d]^{#1}_{\dashv}\ar@<-1ex>@{<-}[d]_{#2}}
\newcommand{\adjr}[2]{\ar@<1ex>[r]^{#1}_{\top}\ar@<-1ex>@{<-}[r]_{#2}}
\newcommand \cyl{\Gamma}
\newcommand \Cyl[1]{\cyl(#1)}
\newcommand \nCyl[2]{\cyl_{#1}(#2)}
\newcommand \cto{\curvearrowright}
\newcommand \Top {\pi^1}
\newcommand \Bot {\pi^2} 
\newcommand \Triv{\tau}
\newcommand \orient{\mathcal{O}}
\newcommand \orienta{\mathcal{O}_{+}}
\newcommand \norient[1]{\orient_{#1}}
\newcommand \faced[2]{\delta_{#1}^{#2}}
\newcommand \degend[2]{\sigma_{#1}^{#2}}
\newcommand \NN {\mathbb N} 
\newcommand \homset[3]{#1(#2,#3)}
\renewcommand \epsilon \varepsilon
\renewcommand \setbis[2] {\{#1 \; | \; #2\}} 
\newcommand \sce[1] {#1^{-}} 
\newcommand \tge[1] {#1^{+}} 
\renewcommand \comp[1] {\mathop{\ast_{#1}}} 
\renewcommand \Top {\overline \pi} 
\renewcommand \Bot {\underline \pi} 
\newcommand \expcat {\mathbf{Cat}_{\omega, e}}  
\newcommand \proj \Pi 
\newcommand \trunc[2] {#2^{(#1)}} 
\newcommand \expan[1]{#1^{\lhd}} 
\newcommand \fexpan[1]{#1^{{\lhd} {\ast}}} 
\newcommand \eexpan[1]{#1^{{\lhd} {\lhd}}}
\newcommand\expanorig[1]{(\expan #1, \orig)}
\newcommand \fgf U 
\newcommand \frf F 
\newcommand \exmon T 
\newcommand \isom \simeq 
\newcommand \emb \hookrightarrow 
\newcommand \id[1] {\mathrm{id}_{#1}} 
\newcommand \munit \eta 
\newcommand \mult \mu 
\newcommand \scata {\scat_{+}} 
\newcommand\End{\operatorname{End}}
\newcommand \eps[1] {#1^\epsilon} 
\newcommand \SCE {\partial^{-}} 
\newcommand\source\SCE
\newcommand \TGE {\partial^{+}} 
\newcommand\target\TGE
\newcommand \EPS {\partial^\epsilon} 
\newcommand \ind \sharp  
\newcommand \ppal[1] {|#1|} 
\newcommand \tp \overline 
\newcommand \bt \underline 
\newcommand \cone \Lambda 
\newcommand \Cone[1] {\cone(#1, \orig)} 
\newcommand \Bas \pi 
\newcommand \inc {\eta} 
\newcommand \orig o 
\newcommand \jok {{\ast}} 
\newcommand \econe \xi 
\newcommand \ec \hat 
\newcommand \ch[1] {\langle #1 \rangle} 
\newcommand \cch[1] {\ch{\! \ch{#1} \!}} 
\newcommand \shift[1] {\lceil #1 \rceil} 
\newcommand \sch[2] {\ch{#1, #2}} 
\newcommand \smp \ch 
\newcommand \schsmp[2] {\ch{#1, \! \smp{#2} \!}}  
\newcommand\ADC{\mathbf{ADC}}
\newcommand\ev[1]{\operatorname{ev}^{}_{#1}} 
\begin{document}

\begin{abstract}
The aim of this paper is to give an alternative construction of Street's
cosimplicial object of orientals, based on an idea of Burroni that orientals
are free algebras for some algebraic structure on strict \oo-categories.
More precisely, following Burroni, we define the notion of an expansion on an
\oo-category and we show that the forgetful functor from strict
\oo-categories endowed with an expansion to strict \oo-categories is
monadic. By iterating this monad starting from the empty \oo-category, we
get a cosimplicial object in strict \oo-categories. Our main contribution is
to show that this cosimplicial object is the cosimplicial objects of
orientals. To do so, we prove, using Steiner's theory of augmented directed
chain complexes, a general result for comparing polygraphs having same
generators and same linearized sources and targets.
\end{abstract}

\maketitle

\section*{Introduction}

The $n$-th oriental $\norient{n}$, introduced by Street in~\cite{StreetOrient}, is a (strict, globular) $n$-category shaped on the standard $n$-simplex.
More precisely, $\norient{n}$ is an $n$-category freely generated by a polygraph (or~computad) whose generating $k$-cells
correspond to the $k$-faces of the standard $n$-simplex.
Orientals organize themselves into a cosimplicial object into $\ooCat$, the category of (strict, globular) \oo-categories and (strict) \oo-functors,
that is, into a functor
\begin{displaymath}
\orient:\scat\to\ocat,
\end{displaymath}
where $\scat$ denotes the simplex category. This cosimplicial object induces a functor
\begin{displaymath}
\nrv:\ocat\to \sset,
\end{displaymath}
called \ndef{Street's~nerve}, taking each \oo-category $C$ to the simplicial set
\begin{displaymath}
 \nrv C: \deltan n\mapsto \homset{\ocat}{\norient{n}}{C}.
\end{displaymath}
The original motivation of Street was to define a cohomology with
coefficients in an \oo-category.

The combinatorics involved in \cite{StreetOrient} is notoriously hard.  This
led Street to extract in~\cite{StreetParComp} (see also
\cite{StreetParCompCorr}) the essential properties making it work. This was
formalized in his notion of a \ndef{parity complex}.
Using~\cite{StreetParComp}, the $n$-th oriental becomes the $n$-category
associated to a simple structure, the parity complex given by the faces of the
standard $n$-simplex, all the difficulty being now
hidden in the general machinery of parity complexes. In the same paper, he
defined a join construction for parity complexes, leading to an inductive
construction of the orientals, that is, a construction of $\norient{n+1}$
from $\norient{n}$.

Alternative definitions of the orientals were given by various people.
Burroni proposed during a presentation~\cite{BurOxford} an
inductive definition with explicit formulas but he didn't compare his
definition to Street's one. A short summary of this work was published a few
years later~\cite{BurOrientCahiers}. A similar approach was taken
independently by Buckley and Garner in \cite{BucGar} who did compare their
definition to Street's one. Another definition was given by
Steiner using his theory of augmented directed complexes \cite{Steiner}
(see \cite{SteinerOrient} for a comparison of the two definitions). Finally,
the first author and Maltsiniotis defined in \cite{AraMaltsiJoint} a join
construction for strict \oo-categories and showed that the cosimplicial
object of orientals is induced by the unique monoid structure for this join
supported by  the terminal \oo-category.

One of the drawbacks of the inductive definitions of the orientals is that they
don't give for free a cosimplicial object, except if one can show that the
iterated construction is equipped with the structure of a monad. It
is claimed in \cite{BurOxford} and \cite{BurOrientCahiers} that it is indeed
the case but this is far from obvious from the defining formulas. Burroni
gave a beautiful solution to this difficulty in a
draft~\cite{BurOrientPreprint} that was meant to be the extended version of
\cite{BurOrientCahiers}: the iterated construction is the monad
corresponding to some explicit algebraic structure on \oo-categories
that he called ``\oo-initial''.

\asterism

The purpose of this paper is two-fold. First, we give a formal, complete
account of Burroni's ideas on orientals --- up to now only circulated as short
papers, preprints and presented in talks. Second, we show that Burroni's
definition is equivalent to Street's one. For this purpose, we prove a
general result for comparing polygraphs with same generators and same
linearized sources and targets using Steiner's theory of augmented directed
complexes.

Let us explain in more details Burroni's construction. If $C$ is an
\oo-category, an \ndef{expansion} on~$C$ (called an ``\oo-initial
structure'' in \cite{BurOrientPreprint}, and a contraction in
\cite{AraMaltsiCondE}, where the notion was introduced independently)
consists of a $0$-cell $\orig$ of $C$, called the \ndef{origin}, and a
directed homotopy, that is, an oplax natural transformation,
from the constant \oo-functor $\orig : C \to C$
to the identity \oo-functor $\id C : C \to C$, satisfying some degeneracy
conditions. When $C$ is a category (seen as an \oo-category with only
trivial cells from dimension $2$ on), the possible origins for an expansion
on $C$ are precisely the initial objects of $C$. In general, the origin of
an expansion should be thought of as an \oo-initial object (hence Burroni's
terminology). By abstract nonsense, the forgetful functor $\fgf : \expcat
\to \ocat$, where $\expcat$ stands for the category of \oo-categories
endowed with an expansion, admits a left adjoint and we thus get a monad $T
: \ocat \to \ocat$. This monad induces a cosimplicial object $\orient :
\scat \to \ocat$ defined on objects by $\norient{n} = T^{n+1}(\emptyset)$,
where $\emptyset$ is the empty \oo-category. This is the definition of
orientals given in~\cite{BurOrientPreprint}.

\asterism

The paper is organized as follows: Section~\ref{basic} recalls the basic
definitions about $\omega$\nbd-categories and polygraphs, and sets related
notations. We particularly stress the role of the endofunctor of cylinders
in $\ocat$ first introduced in \cite{MetResPol}. Section~\ref{monad}
contains the main definition of the paper, namely the notion of {\em
$\omega$-category with expansion}, and introduces the associated adjunction
between $\expcat$ and $\ocat$, leading to an abstract, very compact
definition of the orientals. We then give an explicit description of the
resulting monad when applied to an $\omega$-category freely generated by
a polygraph. In particular, we get that our orientals are freely generated
by polygraphs. In Section~\ref{sec:calculus} we give a refined description
of the combinatorics of these objects by means of a convenient notation, the
{\em oriental calculus}. Section~\ref{comparison} finally establishes that
our orientals coincide with those originally defined by Street.  To do so,
we prove, using Steiner's theory of augmented directed complexes, a general
result for comparing polygraphs having same generators and same linearized
sources and targets.

\subsection*{Acknowledgement}

We warmly thank Albert Burroni for having shared with us over the
years, during lengthy conversations, his numerous insights and a
categorical wisdom not found in books.

\section{Basic notions on higher dimensional categories} \label{basic}

This section briefly recalls the basic definitions concerning (strict, globular) \oo-categories
and fixes the notations to be used throughout this work. 

\subsection{Globular sets and higher dimensional categories}

\begin{paragr}
We write $\oglob$ for the \ndef{category of globular sets}:
\begin{itemize}
\item A \ndef{globular set} $C$ is an infinite sequence of sets $C_0, C_1, C_2, \ldots$
together with infinitely many \ndef{source} maps $\SCE$ and \ndef{target}
maps $\TGE$
\[
\begin{xy}
\xymatrix{C_0 & C_1 \doubl \SCE \TGE & C_2 \doubl \SCE \TGE & \doubl \SCE\TGE \cdots}
\end{xy}
\]
satisfying the \ndef{globular conditions}:
\[
\SCE \SCE = \SCE \TGE, \qquad
\TGE \SCE = \TGE \TGE.
\]
\item A \ndef{globular morphism} $f : C \to D$ is an infinite sequence of maps
\[
f_0 : C_0 \to D_0, f_1 : C_1 \to D_1, f_2 : C_2 \to D_2, \ldots
\]
commuting to the above maps, that is, making the  diagram
\[
\begin{xy}
  \xymatrix{C_0 \ar[d]_{f_0}	& C_1 \doubl \SCE \TGE \ar[d]_{f_1}		& C_2 \doubl \SCE \TGE \ar[d]_{f_2}		& \cdots \doubl{\SCE}{\TGE} \\
		 D_0				& D_1\doubl \SCE \TGE				& D_2 \doubl \SCE \TGE				&
   \cdots \doubl{\SCE}{\TGE}}
\end{xy}  
\] 
commute, in the sense that $f_n \EPS = \EPS f_{n+1}$ for $\e = \pm$ and $n
\ge 0$. Whenever $x$ belongs to~$C_n$, we shall simply write $f(x)$ for
$f_n(x)$.
\end{itemize}
\end{paragr}

\begin{paragr}
An element $x$ of $C_n$ is called a \ndef{cell of dimension} $n$, or simply
an $n$-\ndef{cell}, \ndef{in} $C$.
\begin{itemize}
\item If $n > 0$, its \ndef{source} $\sce x= \SCE x$ and its \ndef{target} $\tge x= \TGE x$
are $(n{-}1)$-cells in $C$.
\item For $i \leq  n$, its $i$-\ndef{source} (resp.~its $i$-\ndef{target})
is the $i$-cell $\EPS_i x = \EPS \! \cdots \EPS x$, where $\e$ stands
for~$-$ (resp.~for $+$) and $\EPS$ is applied $n-i$ times to $x$. We shall
also write $\eps x_i$ for $\EPS_i x$. In~particular, we get $\eps x_n = x$.
\end{itemize}
If $p < n$, to make the $i$-source and $i$-target of the $n$-cell $x$
explicit for $p \leq i < n$, we write
\[
x : \sce x_{n-1} \to \tge x_{n-1} : \cdots : \sce x_p \to \tge x_p.
\]
We say that two $n$-cells $x$ and $y$ of $C$ are \ndef{parallel} if, either $n =
0$, or $n > 0$ and $x$ and $y$ have same source and same target.
\end{paragr}

\begin{examples}
We get $x : \sce x_0 \to \tge x_0$ if $x$ is a 1-cell, and $x : \sce x_1 \to \tge x_1 : \sce x_0 \to \tge x_0$ if $x$~is~a~2-cell.
\[
\begin{xy}
    \xymatrix{\scriptstyle{\sce x_0}\cdot\ar[r]^-{x} & \cdot\ \scriptstyle{\tge x_0}}
  \end{xy}
\hspace{5em}
  \begin{xy}
    \xymatrix{\scriptstyle{\sce x_0}\cdot
    \rrtwocell^{\sce x_1}_{\tge x_1}{x}
    & &\cdot\ \scriptstyle{\tge x_0}}
  \end{xy}
\]
\end{examples}

\begin{paragr}
By restriction to finite sequences $C_0,\dots,C_n$, we get the \ndef{category $\nglob n$ of $n$-globular sets}.
In particular, $\nglob 0$ is $\Set$, the category of sets, and $\nglob 1$ is the category of (directed) graphs.
Note that there is an obvious \ndef{truncation functor} from $\oglob$ to $\nglob n$,
mapping $C$ to the $n$-globular set $\trunc n C$ obtained by removing all cells of dimension $> n$.
\end{paragr}

\begin{paragr}
We write $\ocat$ for the \ndef{category of \oo-categories}:
\begin{itemize}
\item An \oo-\ndef{category} is a globular set $C$, together with \ndef{compositions} and \ndef{units}
satisfying the~laws of \ndef{associativity}, \ndef{unit}, \ndef{interchange} and \ndef{functoriality of units}.
\item An \oo-\ndef{functor} is a globular morphism $f : C \to D$ preserving compositions and units.
\end{itemize}
\end{paragr}

\begin{paragr}
For $n > p$ and for any $n$-cells $x, y$ such that $\tge x_p = \sce y_p$ in an \oo-category $C$,
we get an $n$-cell $z = y \comp p x$, called $p$-\ndef{composition of $y$
and $x$}, with the following iterated sources and targets:
\[
\eps z_i = \eps x_i = \eps y_i \mbox{ for } i < p, \qquad
\sce z_p = \sce x_p, \qquad
\tge z_p = \tge y_p, \qquad
\eps z_i = \eps y_i \comp p \eps x_i \mbox{ for } p < i < n.
\]
We shall omit parentheses by giving priority to the lowest dimensional composition, namely:
\[
z \comp p y \comp q x =
\left\{
\begin{array}{ll}
(z \comp p y) \comp q x & \mbox{if } p \leq q, \myspace \\
z \comp p (y \comp q x) & \mbox{if } p \geq q.
\end{array}
\right.
\]
By associativity, both conventions are indeed compatible in case $p = q$.
\end{paragr}

\begin{paragr}
For any $p$-cell $u$ in an \oo-category $C$, we get a $(p{+}1)$-cell $\unit u : u \to u$, called $p$-\ndef{unit on} $u$.
By iterating this operator $i$ times for $i > 0$, we get the following $p$-\ndef{unit of dimension $p+i$ on} $u$:
\[
\Unit i u : \Unit{i - 1} u \to \Unit{i - 1} u : \cdots : \unit u \to \unit u : u \to u.
\]
By the law of functoriality of units, any $p$-cell $u$ can be identified with the $n$-cell $\Unit{n - p} u$ for $n > p$.
In fact, we shall not identify them, but we shall use the following abbreviations for $n > p > q$,
for any $n$-cell $x$ and any $p$-cell $u$ such that $\tge u_q = \sce x_q$ (resp.~$\tge x_q = \sce u_q$):
\[
x \comp q u = x \comp q \Unit{n - p} u, \qquad
u \comp q x = \Unit {n - p} u \comp q x.
\]
\end{paragr}

\begin{example}
For any 2-cell $x$ and any 1-cell $u$ such that $\tge u_0 = \sce x_0$, we get
\[
x \comp 0 u = x \comp 0 \unit u : \sce x_1 \comp 0 u \to \tge x_1 \comp 0 u : \sce u_0 \to \tge x_0.
\]
\[
  \begin{xy}
    \xymatrix{\scriptstyle{\sce u_0}\cdot
    \ar[r]^{u}&\underset{\makebox[0pt]{$\scriptstyle{\tge u_0 = \, \sce x_0}$}}{\cdot}\rrtwocell^{\sce x_1}_{\tge x_1}{x} & &\cdot\ \scriptstyle{\tge x_0}}
  \end{xy}
\]
\end{example}

\begin{paragr}
By using $n$-globular sets instead of globular sets, we get the \ndef{category $\ncat n$ of $n$-categories}.
In particular, $\ncat 0$ is $\Set$ and $\ncat 1$ is $\Cat$, the category of
small categories.

The truncation functor $C \mapsto \trunc{n}C$ from globular sets to
$n$-globular sets extends to a \ndef{truncation functor} from $\ocat$ to
$\ncat n$, which we will denote in the same way. This functor admits a left
adjoint mapping any $n$-category $C$ to the \oo-category obtained by adding
an $n$-unit $\Unit i u$ of dimension $n + i$ for $i > 0$ and for each
$n$-cell $u$ in $C$. This \ndef{canonical embedding} yields an equivalence
between $\ncat n$ and the full subcategory of $\ocat$ whose objects only
have unit cells beyond dimension $n$.  In other~words, any $n$-category, and
in particular any set, can be seen as an \oo-category.
\end{paragr}

\begin{paragr}
The category $\ocat$ is complete and cocomplete.
In particular, we get the following two \oo-categories:
\begin{itemize}
\item the \ndef{initial \oo-category} is the empty set $\emptyset$, which has no cell,
\item the \ndef{terminal \oo-category} is the singleton set $\sing = \set \orig$, which has a single 0-cell $\orig$,
and a~single $n$-cell $\Unit n \orig$ for each $n > 0$.
\end{itemize}
\end{paragr}

\begin{paragr}
The category $\ocat$ is the limit of the following diagram of categories, where arrows are truncation functors:
  \[
    \begin{xy} 
      \xymatrix{\ncat 0 & \ncat 1 \ar[l] & \ncat 2 \ar[l] & \cdots \ar[l]}
    \end{xy}
  \]
Moreover, the category $\ncat{n+1}$ is \ndef{enriched over} $\ncat n$, and likewise, $\ocat$ is enriched over itself.
For any \oo-category $C$ and for any 0-cells $u, v$ in $C$, we get indeed another \oo-category $\homset C u v$:
\begin{itemize}
\item An $n$-cell in $\homset C u v$ is an $(n{+}1)$-cell $x$ in $C$ such that $\sce x_0 = u$ and $\tge x_0 = v$.
\item The $p$-composition and $p$-units in $\homset C u v$ are the $(p{+}1)$-composition and $(p{+}1)$-units in~$C$.
\end{itemize}
\end{paragr}

\subsection{Polygraphs}

The forgetful functor from $\ocat$ to $\oglob$ has a left adjoint, yielding a notion of~\oo-\ndef{category freely generated by a globular set}.
Here, we describe a more general notion of \oo-\ndef{category freely generated by a polygraph}, or \ndef{computad},
introduced independently in \cite[Section 4]{StreetOrient} and \cite{BurHighdim}.

\begin{paragr}
Consider the following commutative diagram of categories, where the horizontal arrows are forgetful functors, and the vertical ones are truncation functors:
\[
  \begin{xy}
    \xymatrix{\ncat{n+1} \ar[r]\ar[d]& \nglob{n+1}\ar[d]\\
                     \ncat n \ar[r] & \nglob n}
  \end{xy}
\]
We get a functor $\Right n: \ncat{n+1} \to \ncatpl n$, where $\ncatpl n$ is defined by the following pullback~square:
\[
 \begin{xy}
    \xymatrix{\ncatpl n \ar[r]\ar[d]\ar@{}@<-3pt>[rd]|(0.2){\displaystyle{\lrcorner}} & \nglob{n+1}\ar[d]\\
                     \ncat n \ar[r] & \nglob n}
  \end{xy}
\]
It happens that this functor has a left adjoint $\Left n : \ncatpl n \to \ncat{n+1}$.
See \cite{BurHighdim} or~\cite{MetCofib}.
\end{paragr}

\begin{paragr}
More concretely, an object of $\ncatpl n$ is a pair $(C,S_{n+1})$, where $C$ is an $n$-category
and $S_{n+1}$ is a set of $(n{+}1)$-\ndef{generators}, together with maps $\SCE, \TGE : S_{n+1} \to C_n$
satisfying the globular conditions in case $n > 0$.
The functor $\Left n$ maps this object to the $(n{+}1)$-category
\[
  \begin{xy}
    \xymatrix{C_0 & \cdots \doubl{\SCE}{\TGE} & C_n \doubl{\SCE}{\TGE} & \free S_{n+1}, \doubl{\SCE}{\TGE}}
  \end{xy}
\]
where $\free S_{n+1}$ consists of formal compositions of $(n{+}1)$-generators and $n$-units,
quotiented by the laws of associativity, unit, interchange and functoriality of units.

By construction, the map $S_{n+1} \emb \free S_{n+1}$, which can be shown to
be an injection,  commutes to the source and target maps, and the above
$(n{+}1)$-category $C' = \Left n(C,S_{n+1})$ has the following universal
property:
\end{paragr}

\begin{lemma}\label{lemma:universal}
Consider some $(n{+}1)$-category $D$ and some $n$-functor $f : C \to \trunc
n D$.  Then any map $e_{n+1} : S_{n+1} \to D_{n+1}$ such that $\EPS e_{n+1}
= f_n \EPS$ for $\e = \pm$
extends to a unique map $f_{n+1} : \free S_{n+1} \to D_{n+1}$ such that $f_0, \ldots, f_{n+1}$ form an $(n{+}1)$-functor from $C'$ to $D$.
\[
\begin{xy}
  \xymatrix{C_0\ar[d]_{f_0}		& \cdots \doubl{\SCE}{\TGE} & C_n \doubl{\SCE}{\TGE} \ar[d]_{f_n}
	& \free S_{n+1} \doubl \SCE \TGE \ar@{.>}[d]_{f_{n+1}}	& \, S_{n+1}\ar[dl]^{e_{n+1}}\ar@{_{(}->}[l] \\
		 D_0				& \cdots \doubl{\SCE}{\TGE} & D_n \doubl{\SCE}{\TGE}
	& D_{n+1} \doubl \SCE \TGE						&}
\end{xy}  
\] 
\end{lemma}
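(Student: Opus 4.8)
The statement is the universal property of the free $(n{+}1)$-category $C' = \Left n(C, S_{n+1})$, so the natural approach is to exhibit the extension $f_{n+1}$ explicitly and then check well-definedness and functoriality. Recall that, by the concrete description preceding the lemma, $\free S_{n+1}$ consists of formal compositions built from the generators in $S_{n+1}$ and the $n$-units $\unit u$ for $u \in C_n$, subject only to the equations of associativity, unit, interchange and functoriality of units. The plan is therefore to define $f_{n+1}$ on such formal expressions by structural recursion: send a generator $s \in S_{n+1}$ to $e_{n+1}(s)$, send a unit $\unit u$ to $\unit{f_n(u)}$, and send a formal composite $y \comp p x$ to $f_{n+1}(y) \comp p f_{n+1}(x)$ (which is legal in $D$ because, by induction, $f_{n+1}$ respects $i$-sources and $i$-targets for $i \le n$, so the composability condition $\tge{f_{n+1}(x)}_p = \sce{f_{n+1}(y)}_p$ follows from $\tge x_p = \sce y_p$ together with the hypothesis $\EPS e_{n+1} = f_n \EPS$ and the compatibility of $f_0, \dots, f_n$ with sources and targets).

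\textbf{Key steps, in order.} First I would fix the recursive definition above on raw (unquotiented) formal expressions and check, by a routine induction on the structure of the expression, that it is compatible with all four families of defining equations of $\free S_{n+1}$ — associativity and interchange hold in $D$ because $D$ is an $(n{+}1)$-category, the unit laws hold because $\unit u$ is sent to $\unit{f_n(u)}$, and functoriality of units holds because $f_n$ is an $n$-functor so preserves the $n$-composites $v \comp q u$ that appear. This yields a well-defined map $f_{n+1} : \free S_{n+1} \to D_{n+1}$. Second, I would verify that $f_{n+1}$ commutes with $\SCE$ and $\TGE$: again a structural induction, using $\EPS e_{n+1} = f_n \EPS$ on generators, the identity $\EPS \unit u = u$ (up to the truncation comparison, sending $\unit u$'s boundary to $f_n(u)$) on units, and the source/target formulas for $p$-composites recalled in the excerpt on composites. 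Together with the hypothesis that $f_0, \dots, f_n$ already form an $n$-functor, this shows $f_0, \dots, f_{n+1}$ is a globular morphism $C' \to D$ compatible with the structure maps. Third, preservation of compositions and units in dimension $\le n$ is inherited from $f_n$, and in dimension $n{+}1$ it is immediate from the recursive clauses; so $f_0, \dots, f_{n+1}$ is an $(n{+}1)$-functor. Finally, uniqueness: any map $f_{n+1}'$ making $f_0, \dots, f_{n+1}'$ an $(n{+}1)$-functor must agree with $e_{n+1}$ on $S_{n+1}$ (by the commutativity of the triangle), must send $\unit u$ to $\unit{f_n(u)}$ (since it preserves units and agrees with $f_n$ below) and must send composites to composites (since it preserves $p$-composition); since these clauses determine the value on every element of $\free S_{n+1}$, we get $f_{n+1}' = f_{n+1}$.

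\textbf{Main obstacle.} The only genuinely delicate point is the well-definedness in the first step: one must be sure that the recursive assignment descends to the quotient of formal expressions by the congruence generated by associativity, unit, interchange and functoriality of units. This is not hard conceptually — it amounts to checking each generating relation maps to a true equation in the $(n{+}1)$-category $D$ — but it is the step where the hypotheses on $f_n$ (being an $n$-functor, hence preserving $n$-composites and $n$-units) and the defining axioms of $D$ are all used, and the one that would need to be spelled out with some care. Everything else is bookkeeping with the source/target formulas already recorded in the excerpt. (Alternatively, one can bypass the explicit construction entirely: the adjunction $\Left n \dashv \Right n$ stated just before the lemma gives exactly this universal property, so one may simply transcribe the adjunction's universal property into the elementwise statement of the lemma; I would mention this as the slick route but give the explicit argument since it is what the later sections will use.)
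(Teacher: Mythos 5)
Your proposal is correct, and it is worth noting that the paper itself does not prove this lemma: it is stated as the universal property of the construction $C' = \Left n(C,S_{n+1})$ recalled just before, with the existence of the left adjoint $\Left n \dashv \Right n$ delegated to the cited references (Burroni's and M\'etayer's papers). So your ``slick route'' via the adjunction is essentially the paper's own stance, while your explicit argument fleshes out what those references establish. Your construction is the standard one and the steps are in the right order: define the map by structural recursion on formal composites (generators to $e_{n+1}$, $n$-units $\unit u$ to $\unit{f_n(u)}$, composites to composites), check that it respects the congruence generated by associativity, unit, interchange and functoriality of units (this is indeed the only delicate point, and you correctly identify that it uses both the axioms of $D$ and the fact that $f$ is an $n$-functor, e.g.\ for $\unit{v \comp q u} \mapsto \unit{f_n(v)\comp q f_n(u)} = \unit{f_n(v)}\comp q \unit{f_n(u)}$ in $D$), verify compatibility with $\SCE,\TGE$ using the hypothesis $\EPS e_{n+1} = f_n\EPS$ on generators, and conclude uniqueness because the clauses of the recursion are forced on any $(n{+}1)$-functor extending $f$ and agreeing with $e_{n+1}$ on $S_{n+1}$. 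One small caveat: for the well-definedness argument to be fully rigorous you should make explicit that the recursion is performed on a set of raw composition trees whose leaves are generators and units, with source/target computed by the formulas for composites, and that composability of the images is proved simultaneously with the compatibility of the map with iterated sources and targets (a joint induction), since the former is needed for the recursive clause to make sense before one can even state the latter; you gesture at this but it is the place where the bookkeeping has to be done carefully. With that understood, your argument buys an elementary, self-contained proof, whereas the paper's choice buys brevity at the cost of an external reference.
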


\begin{paragr}
By induction on $n$, we define the \ndef{category $\npol n$ of $n$-polygraphs}, together with a functor
$\Free n : \npol n \to \ncat n$ mapping any $n$-polygraph $S$ to the \ndef{free $n$-category $\free S \!$ generated by} $S$:
\begin{itemize}
\item The category $\npol 0$ is $\ncat 0$, that is $\Set$, and $\Free 0 : \Set \to \Set$ is the identity functor.
\item Suppose that the category $\npol n$ and the functor $\Free n :
  \npol n\to \ncat n$ have been defined. Then
the category $\npol{n+1}$ is given by the pullback square
\[
  \begin{xy}
    \xymatrix{\npol{n+1}\ar[r]\ar[d]\ar@{}@<-3pt>[rd]|(0.2){\displaystyle{\lrcorner}} & \ncatpl {n}\ar[d]\\
                     \npol n \ar[r]_{\Free n} & \ncat n}
  \end{xy}
\]
and $\Free {n+1} : \npol{n+1} \to \ncat{n+1}$ is the composition of $\Left n : \ncatpl n \to \ncat{n+1}$ by the top~arrow.
\end{itemize}
In particular, $\npol 1$ is $\ncatpl 0$, that is $\nglob 1$, and $\Free 1(S) = \free S$ is the free category generated by $S$.
\end{paragr}

\begin{definition}[polygraphs] \ %
\noindent
The \ndef{category $\opol$ of polygraphs} is the limit of the following diagram,
where each arrow is the truncation functor given by the previous pullback square:
  \[
    \begin{xy}
      \xymatrix{\npol 0 & \npol 1 \ar[l] & \npol 2 \ar[l] & \cdots \ar[l]}
    \end{xy}
  \]
The functors $\Free n$ induce a functor $\oFree : \opol \to \ocat$ mapping $S$ to the \ndef{free \oo-category $\free S \!$ generated by}~$S$.
\end{definition}

\begin{paragr}
More concretely:
\begin{itemize}
\item A polygraph $S$ is given by an infinite diagram of the form
  \[
    \begin{xy}
      \xymatrix{S_0 \ar@{_{(}->}[d]	& S_1 \ar@{_{(}->}[d] \DOUBLD	& S_2 \ar@{_{(}->}[d] \DOUBLD	& \cdots \DOUBLD	 \\
			\free{S_0}			& \free{S_1} \DOUBL			& \free{S_2} \DOUBL			& \cdots \DOUBL	}
    \end{xy}
  \]
where $S_i$ is a set of $i$-\ndef{generators}, and the bottom row displays $\free S$, starting from $\free S_0 = S_0$.
\item A morphism $f : S \to T$ is given by an infinite sequence of maps
\[
f_0 : S_0 \to T_0, \; f_1 : S_1 \to T_1, \; f_2 : S_2 \to T_2, \ldots
\]
compatible with sources and targets so that they induce maps
\[
\free f_0 : \free S_0 \to \free T_0, \free f_1 : \free S_1 \to \free T_1, \free f_2 : \free S_2 \to \free T_2, \ldots
\]
defining an \oo-functor $\free f \! : \free S \! \to \free T \!$.
This means that $\oFree (f) = \free f$ is \ndef{rigid}: it preserves generators.
\end{itemize}
In case $S_i = \emptyset$ for $i > n$, the polygraph $S$ is in fact an $n$-polygraph, and $\free S$ is an $n$-category.
This \ndef{canonical embedding} of $\npol n$ into $\opol$ is the left adjoint of the obvious truncation functor.
\end{paragr}

\begin{examples}\ %
\begin{itemize}
\item If $S$ consists of a single 0-generator $\orig$, then $\free S$ is the singleton set $\sing = \set \orig$.
\item If $S$ consists of two 0-generators $\orig, \orig'$ and a single 1-generator $\sigma : \orig \to \orig'$,
then $\free S$ consists of the generators and the 0-units $\unit \orig : \orig \to \orig$, $\unit{\orig'} : \orig' \to \orig'$.
\item If $S$ consists of three 0-generators $\orig$, $\orig'$, $\orig''$,
three 1-generators $\sigma : \orig \to \orig'$, $\sigma' : \orig' \to \orig''$, $\sigma'' : \orig \to \orig''$,
and a single 2-generator $\tau : \sigma'' \to \sigma' \comp 0 \sigma$,
then $\free S$ consists of the generators, the 0-composition $\sigma' \comp 0 \sigma : \orig \to \orig''$,
the 0-units $\unit \orig : \orig \to \orig, \unit{\orig'} : \orig' \to \orig'$, $\unit{\orig''} : \orig'' \to \orig''$,
the~1-units $\unit \sigma : \sigma \to \sigma, \unit{\sigma'} : \sigma' \to \sigma'$, $\unit{\sigma''} : \sigma'' \to \sigma''$,
$\unit{\sigma' \comp 0 \sigma} : \sigma' \comp 0 \sigma \to \sigma' \comp 0 \sigma$,
and the~0-units $\Unit 2 \orig : \unit \orig \to \unit \orig$, $\Unit 2 {\orig'} : \unit{\orig'} \to \unit{\orig'}$, $\Unit 2 {\orig''} : \unit{\orig''} \to \unit{\orig''}$ of dimension 2.
\item If $S$ consists of a single 0-generator $\orig$ and a single 1-generator $\sigma : \orig \to \orig$,
then $\free S$ consists of the 0-generator, and infinitely many 1-cells, which are of the form
\[
\sigma^0 = \unit \orig : \orig \to \orig, \qquad
\sigma^i = \sigma \comp 0 \cdots \comp 0 \sigma : \orig \to \orig \mbox{ for } i > 0.
\]
\item If $S$ consists of a single 0-generator $\orig$ and a single 2-generator $\tau : \unit \orig \to \unit \orig$,
then $\free S$ consists of the 0-generator, the 0-unit $\unit \orig : \orig \to \orig$, and infinitely many 2-cells, which are of the form
\[
\tau^0 = \Unit 2 \orig : \unit \orig \to \unit \orig : \orig \to \orig, \qquad
\tau^i = \tau \comp 0 \cdots \comp 0 \tau = \tau \comp 1 \cdots \comp 1 \tau : \unit \orig \to \unit \orig : \orig \to \orig \mbox{ for } i > 0.
\]
\end{itemize}
\[
  \begin{xy}
    \xymatrix{ &&&&&&&\overset{\orig'}{\cdot}\ar[rd]^{\sigma'}& &&&&\\
\overset{\orig}{\cdot} & &\overset{\orig}{\cdot}\ar[rr]_{\sigma} & 
& \overset{\orig'}{\cdot} && \underset{\orig}{\cdot}
\ar[rr]_{\sigma''}\ar[ru]^{\sigma} &
\ar@{=>}(81,-15)*{};(81,-7)*{}_{\tau}
&
\underset{\orig''}{\cdot}&&\underset{\orig}{\cdot}\ar@(ul,ur)^{\sigma}&&
\underset{\orig}{\cdot}\ar@{=>}@(ul,ur)^{\tau}}
  \end{xy}
\]
\end{examples}

\begin{remarks}
The first three examples are (isomorphic to) the first three \ndef{orientals} $\norient 0, \norient 1, \norient 2$.
The last two ones are (isomorphic to) the additive monoid $\NN$,
respectively seen as a $1$-category with a single $0$-cell and as a
$2$-category with a single $0$-cell and a single $1$-cell.
\end{remarks}

\subsection{Cylinders and oplax transformations}

Here, we recall the construction of the \ndef{endofunctor of small cylinders},
and the resulting notion of \ndef{oplax transformation} between two \oo-functors,
called \ndef{homotopy} in \cite{MetResPol} and~\cite{LafMetPolRes}.

For that purpose, we first define the set $\nCyl n C$ of \ndef{$n$-cylinders} in an \oo-category~$C$,
together with two maps $\SCE, \TGE : \nCyl n C \to \nCyl {n-1} C$ in case $n > 0$.
We can then define a structure of \oo-category on the corresponding globular set, which gives the expected endofunctor.

\begin{definition}[cylinders]  \label{def:cyl} \ %

\noindent
If $x, y$ are $n$-cells in $C$, the notion of \ndef{$n$-cylinder} $\alpha : x \cto y$ is given inductively:
\begin{itemize}
\item If $n = 0$, then $\alpha : x \cto y$ consists of a single 1-cell $\alpha_0 : x \to y$ in $C$.
\item If $n > 0$, then $\alpha : x \cto y$ consists of two 1-cells $\sce \alpha_0 : \sce x_0 \to \sce y_0$ and $\tge \alpha_0 : \tge x_0 \to \tge y_0$ in $C$,
together with an $(n{-}1)$-cylinder $\ind \alpha : \tge \alpha_0 \comp 0 x \cto y \comp 0 \sce \alpha_0$ in $\homset C{\sce x_0}{\tge y_0}$.
\end{itemize}
In case $n > 0$, we also get two $(n{-}1)$-cylinders $\SCE \alpha : \SCE x \cto \SCE y$ and $\TGE \alpha : \TGE x \cto \TGE y$
which are given inductively:
\begin{itemize}
\item If $n = 1$, then $\EPS \alpha : \eps x_0 \cto \eps y_0$ is given by the 1-cell $\eps \alpha_0 : \eps x_0 \to \eps y_0$.
\item If $n > 1$, then $\EPS \alpha : \eps x_{n-1} \cto \eps y_{n-1}$ is given by
the 1-cells $\sce \alpha_0 : \sce x_0 \to \sce y_0$ and $\tge \alpha_0 : \tge x_0 \to \tge y_0$,
together with the $(n{-}2)$-cylinder $\ind \EPS \alpha = \EPS \ind \alpha : \tge \alpha_0 \comp 0 \eps x_{n-1} \cto \eps y_{n-1} \comp 0 \sce \alpha_0$
in $\homset C{\sce x_0}{\tge y_0}$.
\end{itemize}
If $\alpha : x \cto y$ is such an $n$-cylinder, we write $\tp \alpha$ and $\bt \alpha$ for the $n$-cells $x$ and $y$ respectively.
\end{definition}

\begin{paragr} \label{paragr:conc}
More concretely, an $n$-cylinder $\alpha : x \cto y$ in $C$ is given by a finite sequence of cells
\[
\sce \alpha_0, \tge \alpha_0, \ldots, \sce \alpha_{n-1}, \tge \alpha_{n-1}, \alpha_n \mbox{ in } C,
\]
where the \ndef{auxiliary cells} $\eps \alpha_i$ have dimension $i + 1$,
the \ndef{principal cell} $\ppal \alpha = \alpha_n$ has dimension~$n + 1$,
and their sources and targets are given as follows:
\[
\begin{array}{c}
\eps \alpha_i : \tge \alpha_{i-1} \comp {i-1} \cdots \comp 1 \tge \alpha_0 \comp 0 \eps x_i \to
\eps y_i \comp 0 \sce \alpha_0 \comp 1 \cdots \comp {i-1} \sce \alpha_{i-1} 
\mbox{ for } i < n, \myspace \\
\ppal \alpha = \alpha_n : \tge \alpha_{n-1} \comp {n-1} \cdots \comp 1 \tge \alpha_0 \comp 0 x \to
y \comp 0 \sce \alpha_0 \comp 1 \cdots \comp {n-1} \sce \alpha_{n-1}.
\end{array}  
\]
For any $i < n$, the $i$-cylinder $\EPS_i \alpha : \eps x_i \cto \eps y_i$ is given by the following sequence of cells:
\[
\sce \alpha_0, \tge \alpha_0, \ldots, \sce \alpha_{i-1}, \tge \alpha_{i-1}, \eps \alpha_i.
\]
\end{paragr}

\begin{remark}
Beware of a slight discrepancy in our notations:
\begin{itemize}
\item If $x$ is a cell, then $\eps x_i$ stands for the $i$-cell $\EPS_i x$.
\item If $\alpha$ is a cylinder, then $\eps \alpha_i$ does not stand for the
$i$-cylinder $\EPS_i \alpha$, but for its \ndef{principal
cell}~$\ppal{\EPS_i \alpha}$.
\end{itemize}
\end{remark}

\begin{examples}\ %
\begin{itemize}
\item If $x, y$ are 0-cells, a 0-cylinder $\alpha : x \cto y$ is given by the 1-cell $\ppal \alpha = \alpha_0 : x \to y$.
\item If $x, y$ are 1-cells, a 1-cylinder $\alpha : x \cto y$ is given by the 1-cells $\eps \alpha_0 : \eps x_0 \to \eps y_0$
and the 2-cell $\ppal \alpha = \alpha_1 : \tge \alpha_0 \comp 0 x \to y \comp 0 \sce \alpha_0$.
\item If $x, y$ are 2-cells, a 2-cylinder $\alpha : x \cto y$ is given by the 1-cells $\eps \alpha_0 : \eps x_0 \to \eps y_0$,
the 2-cells $\eps \alpha_1 : \tge \alpha_0 \comp 0 \eps x_1 \to \eps y_1 \comp 0 \sce \alpha_0$,
and the 3-cell $\ppal \alpha = \alpha_2 : \tge \alpha_1 \comp 1 \tge \alpha_0 \comp 0 x \to y \comp 0 \sce \alpha_0 \comp 1 \sce \alpha_1$.
\end{itemize}
\[
  \begin{xy}
    \xymatrix{\overset{x}{\cdot} \ar[d]^{\alpha_0}\\
                       \underset{y}{\cdot}}
  \end{xy}
\hspace{5em}
  \begin{xy}
    \xymatrix{\overset{\makebox[0pt]{$\scriptstyle{\sce x_0}\ \ \
          $}}{\cdot} 
         \ar[rr]^{x}
         \ar[d]_{\sce \alpha_0} &&
      \overset{\makebox[0pt]{$\ \ \ \scriptstyle{\tge x_0}$}}{\cdot} 
      \ar[d]^{\tge \alpha_0}
       \ar@{=>}[lld]_{\alpha_1}\\
\underset{\makebox[0pt]{$\scriptstyle{\sce y_0}\ \ \ $}}{\cdot}
        \ar[rr]_y
& &\underset{\makebox[0pt]{$\ \ \ \scriptstyle{\tge y_0}$}}{\cdot}  }
  \end{xy}
\hspace{5em}
  \begin{xy}
    \xymatrix{\ar@3{.>}(14,-9)*{};(7,-9)*{}|(0.4){\alpha_2}
                      \ar@2{->}(11,1.5)*{};(11,-1.5)*{}^{x}
                       \ar@2{.>}(11,-16)*{};(11,-19)*{}^{y}
                     \overset{\makebox[0pt]{$\scriptstyle{\sce x_0}\ \ \ $}}{\cdot}
                       \ar@/^1pc/[rr]|(.3){\sce x_1}
                      \ar@/_1pc/[rr]|(.25){\tge x_1}
                      \ar[d]_{\sce \alpha_0}
      &&\overset{\makebox[0pt]{$\ \ \
          \scriptstyle{\tge x_0}$}}{\cdot}
                      \ar[d]^{\tge \alpha_0}
                       \ar@2{->}@/^1pc/[lld]|(.35){\tge \alpha_1}
                       \ar[d]\ar@2{.>}@/_1pc/[lld]|(.67){\sce \alpha_1}\\
           \underset{\makebox[0pt]{$\scriptstyle{\sce y_0}\ 
                            \ \ $}}{\cdot} 
                        \ar@{.>}@/^1pc/[rr]|(.75){\sce y_1}
                         \ar@/_1pc/[rr]|(.7){\tge y_1} 
      && \underset{\makebox[0pt]{$\ \ \ \scriptstyle{\tge y_0}$}}{\cdot}}
  \end{xy}
\]
\end{examples}

\begin{paragr}
If we write $\nCyl n C$ for the set of $n$-cylinders in $C$, we get the following globular set $\Cyl C$: 
\[
\begin{xy}
\xymatrix{\nCyl 0 C & \nCyl 1 C \doubl \SCE \TGE & \nCyl 2 C \doubl \SCE \TGE & \doubl \SCE \TGE \cdots}
\end{xy}
\]
\end{paragr}

The globular set $\Cyl C$ supports a structure of \oo-category we
describe below (see also~\cite{LafMetPolRes,LafMetWorFolk}).

\begin{definition}[\oo-category of small cylinders] \label{def:comp} \ %

\noindent
The \oo-\ndef{category of small cylinders} in $C$ is $\Cyl C$ endowed with the following operations:
\begin{itemize}
\item If $n > p$, if $x, y, z, t$ are $n$-cells such that $\tge x_p = \sce z_p$ and $\tge y_p = \sce t_p$,
and if $\alpha : x \cto y$, $\beta : z \cto t$ are $n$-cylinders such that $\TGE_p \alpha = \SCE_p \beta$,
the \ndef{$p$-composition} $\gamma = \beta \comp p \alpha : z \comp p x \cto t \comp p y$ is the $n$-cylinder given by the following cells:
\end{itemize}
\[
\begin{array}{c}
\eps \gamma_i = \eps \alpha_i = \eps \beta_i \mbox { for } i < p, \qquad
\sce \gamma_p = \sce \alpha_p, \qquad
\tge \gamma_p = \tge \beta_p, \myspace \\
\eps \gamma_{p+1} = (\eps t_{p+1} \! \comp 0 \sce \alpha_0 \! \comp 1 \cdots \comp{p-1} \sce \alpha_{p-1} \! \comp p \eps \alpha_{p+1}) \comp{p+1}
(\eps \beta_{p+1} \! \comp p \tge \beta_{p-1} \! \comp{p-1} \cdots \comp 1 \tge \beta_0 \! \comp 0 \eps x_{p+1}) \mbox { if } p + 1 < n, \myspace \\
\eps \gamma_i = (\tge t_{p+1} \! \comp 0 \sce \alpha_0 \! \comp 1 \cdots \comp{p-1} \sce \alpha_{p-1} \! \comp p \eps \alpha_i) \comp{p+1}
(\eps \beta_i \! \comp p \tge \beta_{p-1} \! \comp{p-1} \cdots \comp 1 \tge \beta_0 \! \comp 0 \sce x_{p+1}) \mbox { for } p + 1 < i < n, \myspace \\
\ppal \gamma = \gamma_n = (\tge t_{p+1} \comp 0 \sce \alpha_0 \comp 1 \cdots \comp{p-1} \sce \alpha_{p-1} \comp p \alpha_n) \comp{p+1}
(\beta_n \comp p \tge \beta_{p-1} \comp{p-1} \cdots \comp 1 \tge \beta_0 \comp 0 \sce x_{p+1}).
\end{array}
\]
\begin{itemize}
\item If $\alpha : x \cto y$ is a $p$-cylinder, the \ndef{$p$-unit} $\unit \alpha : \unit x \cto \unit y$ is given by the following cells:
\[
\sce \alpha_0, \tge \alpha_0, \ldots, \sce \alpha_{p-1}, \tge \alpha_{p-1}, \alpha_p, \alpha_p, \unit{\alpha_p}.
\]
\end{itemize}
We refer to \cite[Appendix A]{MetResPol} for a proof that the axioms of (strict) \oo-categories hold.
\end{definition}

\goodbreak

\begin{examples}\ %
\begin{itemize}
\item If $x, y, z, t$ are 1-cells such that $\tge x_0 = \sce z_0$ and $\tge y_0 = \sce t_0$,
and if $\alpha : x \cto y, \beta : z \cto t$ are 1-cylinders such that $\tge \alpha_0 = \sce \beta_0$,
the 0-composition $\gamma = \beta \comp 0 \alpha : z \comp 0 x \cto t \comp 0 y$ is given by
the 1-cells $\sce \gamma_0 = \sce \alpha_0$ and $\tge \gamma_0 = \tge \beta_0$,
and the 2-cell $\ppal \gamma = \gamma_1 = (t \comp 0 \alpha_1) \comp 1 (\beta_1 \comp 0 x)$.
\item If $x, y, z, t$ are 2-cells such that $\tge x_0 = \sce z_0$ and $\tge y_0 = \sce t_0$,
and if $\alpha : x \cto y, \beta : z \cto t$ are 2-cylinders such that $\tge \alpha_0 = \sce \beta_0$,
the 0-composition $\gamma = \beta \comp 0 \alpha : z \comp 0 x \cto t \comp 0 y$ is given by
the 1-cells $\sce \gamma_0 = \sce \alpha_0$ and $\tge \gamma_0 = \tge \beta_0$,
the 2-cells $\eps \gamma_1 = (\eps t_1 \comp 0 \eps \alpha_1) \comp 1 (\eps \beta_1 \comp 0 \eps x_1)$,
and the 3-cell $\ppal \gamma = \gamma_2 = (\tge t_1 \comp 0 \alpha_2) \comp 1 (\beta_2 \comp 0 \sce x_1)$.
\item If $x, y, z, t$ are 2-cells such that $\tge x_1 = \sce z_1$ and $\tge y_1 = \sce t_1$,
and if $\alpha : x \cto y, \beta : z \cto t$ are 2-cylinders such that $\eps \alpha_0 = \eps \beta_0$ and $\tge \alpha_1 = \sce \beta_1$,
the 1-composition $\gamma = \beta \comp 1 \alpha : z \comp 1 x \cto t \comp 1 y$ is given by
the 1-cells $\eps \gamma_0 = \eps \alpha_0 = \eps \beta_0$,
the 2-cells $\sce \gamma_1 = \sce \alpha_1$ and $\tge \gamma_1 = \tge \beta_1$,
and the 3-cell $\ppal \gamma = \gamma_2 = (t \comp 0 \sce \alpha_0 \comp 1 \alpha_2) \comp 2 (\beta_2 \comp 1 \tge \beta_0 \comp 0 x)$.
\end{itemize}
\[
  \begin{xy}
    \xymatrix{\cdot \ar[r]^x\ar[d]_{\sce \alpha_0} & \cdot\ar[r]^z\ar[d] \ar@2{->}[dl]_*-<4pt>{\scriptstyle{\alpha_1}} & 
    \cdot\ar[d]^{\tge \beta_0}\ar@2{->}[dl]_*-<4pt>{\scriptstyle{\beta_1}}\\
\cdot \ar[r]_y&  \cdot\ar[r]_t &  \cdot}
  \end{xy}
\hspace{2em}
  \begin{xy}
    \xymatrix{\cdot
                     \ar@3{.>}(13,-12)*{};(8,-12)*{}_(.2){\alpha_2}
                      \ar@2{->}(11,1.5)*{};(11,-1.5)*{}^{x}
                      \ar@2{.>}(11,-21)*{};(11,-24)*{}^{y}
                      \ar@3{.>}(35,-12)*{};(30,-12)*{}_(.2){\beta_2}
                      \ar@2{->}(33,1.5)*{};(33,-1.5)*{}^{z}
                      \ar@2{.>}(33,-21)*{};(33,-24)*{}^{t}
                       \ar@/^1pc/[rr]|(.3){\sce x_1}
                       \ar@/_1pc/[rr]|(.3){\tge x_1}\ar[dd]_{\sce \alpha_0}
      &&\cdot 
                       \ar@/^1pc/[rr]
                       \ar@/_1pc/[rr]\ar[dd]
                         \ar@2{->}@/^1pc/[lldd]|(.4){\tge \alpha_1}
                       \ar@2{.>}@/_1pc/[lldd]|(.6){\sce \alpha_1}
       && \cdot
                        \ar[dd]^{\tge \beta_0}
                       \ar@2{->}@/^1pc/[lldd]|(.4){\tge \beta_1}
                       \ar@2{.>}@/_1pc/[lldd]|(.6){\sce \beta_1}    \\
&&&&\\
                  \cdot 
                   \ar@{.>}@/^1pc/[rr]
                    \ar@/_1pc/[rr] 
      && \cdot  
                    \ar@{.>}@/^1pc/[rr]|(.7){\sce t_1}
                    \ar@/_1pc/[rr]|(.7){\tge t_1} && \cdot}
  \end{xy}
\hspace{2em}
  \begin{xy}
    \xymatrix{ \cdot
                        \ar@3{.>}(10,-12)*{};(5,-12)*{}_(.2){\alpha_2}
                         \ar@3{.>}(17,-12)*{};(12,-12)*{}_(.2){\beta_2}
                        \ar@2{->}(11,4)*{};(11,1)*{}^(.4){x}
                        \ar@2{->}(11,-1)*{};(11,-4)*{}^(.6){z}
                        \ar@2{.>}(11,-19)*{};(11,-22)*{}^(.4){y}
                        \ar@2{.>}(11,-24)*{};(11,-27)*{}^(.6){t}
                         \ar@/^{1.5pc}/[rr]
                        \ar[rr]\ar@/_{1.5pc}/[rr]
                         \ar[dd]_{\sce \alpha_0=\sce \beta_0}&
                          &\cdot \ar[dd]^{\tge \alpha_0=\tge \beta_0}
                             \ar@2{->}@/^{1.5pc}/[lldd]|(.4){\tge \beta_1}
                               \ar@2{.>}@/_{1.5pc}/[lldd]|(.6){\sce \alpha_1}
                                \ar@2{.>}[lldd]\\
&& \\
\cdot
                         \ar@{.>}@/^{1.5pc}/[rr]
                          \ar@{.>}[rr]
                           \ar@/_{1.5pc}/[rr]
&&\cdot}
  \end{xy}
\]
\begin{itemize}
\item If $\alpha : x \cto y$ is a 0-cylinder, the 0-unit $\unit \alpha : \unit x \cto \unit y$ is given by the cells $\alpha_0, \alpha_0, \unit{\alpha_0}$.
\item If $\alpha : x \cto y$ is a 1-cylinder, the 1-unit $\unit \alpha : \unit x \cto \unit y$ is given by the cells
$\sce \alpha_0, \tge \alpha_0, \alpha_1, \alpha_1, \unit{\alpha_1}$.
\end{itemize}
\[
  \begin{xy}
    \xymatrix{\overset{\makebox[0pt]{$\scriptstyle{x}\ \ \
          $}}{\cdot} 
         \ar[rr]^{\unit x}
         \ar[d]_{\alpha_0} &&
      \overset{\makebox[0pt]{$\ \ \ \scriptstyle{x}$}}{\cdot} 
      \ar[d]^{\alpha_0}
       \ar@{=>}[lld]_{\unit{\alpha_0}}\\
\underset{\makebox[0pt]{$\scriptstyle{y}\ \ \ $}}{\cdot}
        \ar[rr]_{\unit y}
& &\underset{\makebox[0pt]{$\ \ \ \scriptstyle{y}$}}{\cdot}  }
  \end{xy}
\hspace{5em}
  \begin{xy}
    \xymatrix{\ar@3{.>}(14,-9)*{};(7,-9)*{}|(0.4){\unit{\alpha_1}}
                      \ar@2{->}(11,1.5)*{};(11,-1.5)*{}^{\unit x}
                       \ar@2{.>}(11,-16)*{};(11,-19)*{}^{\unit y}
                     \overset{\makebox[0pt]{$\scriptstyle{\sce x_0}\ \ \ $}}{\cdot}
                       \ar@/^1pc/[rr]|(.3){\, x \,}
                      \ar@/_1pc/[rr]|(.25){\, x \,}
                      \ar[d]_{\sce \alpha_0}
      &&\overset{\makebox[0pt]{$\ \ \
          \scriptstyle{\tge x_0}$}}{\cdot}
                      \ar[d]^{\tge \alpha_0}
                       \ar@2{->}@/^1pc/[lld]|(.35){\alpha_1}
                       \ar[d]\ar@2{.>}@/_1pc/[lld]|(.67){\alpha_1}\\
           \underset{\makebox[0pt]{$\scriptstyle{\sce y_0}\ 
                            \ \ $}}{\cdot} 
                        \ar@{.>}@/^1pc/[rr]|(.75){\, y \,}
                         \ar@/_1pc/[rr]|(.7){\, y \,} 
      && \underset{\makebox[0pt]{$\ \ \ \scriptstyle{\tge y_0}$}}{\cdot}}
  \end{xy}
\]
\end{examples}

\begin{paragr}
Since this construction is clearly functorial, we get an endofunctor $\cyl : \ocat \to \ocat$, and by the above formulas,
the maps $\alpha \mapsto \tp \alpha$ and $\alpha \mapsto \bt\alpha$ define \oo-functors $\Top_C, \Bot_C : \Cyl C \to C$.
Hence, we also get two natural transformations $\Top, \Bot : \cyl \to \id \ocat$.
\end{paragr}

\begin{paragr}
Any $n$-cell $x$ yields a \ndef{trivial $n$-cylinder} $\Triv(x) : x \cto x$ given by the following~cells:
\begin{displaymath}
  \unit{\sce x_0}, \unit{\tge x_0}, \ldots, \unit{\sce x_{n-1}}, \unit{\tge x_{n-1}}, \unit{x_n}.
\end{displaymath}
It is in fact a unit for another composition of cylinders, which is called \ndef{concatenation} in \cite{LafMetPolRes}.
\end{paragr}

\begin{examples}\ %
\begin{itemize}
\item If $x$ is a 0-cell, the 0-cylinder $\Triv(x) : x \cto x$ is given by the cell $\unit x$.
\item If $x$ is a 1-cell, the 1-cylinder $\Triv(x) : x \cto x$ is given by the cells $\unit{\sce x_0}, \unit{\tge x_0}, \unit x$.
\item If $x$ is a 2-cell, the 2-cylinder $\Triv(x) : x \cto x$ is given by the cells $\unit{\sce x_0}, \unit{\tge x_0}, \unit{\sce x_1}, \unit{\tge x_1}, \unit x$.
\end{itemize}
\begin{displaymath}
  \begin{xy}
    \xymatrix{\overset{x}{\cdot} \ar[d]^{\unit x}\\
                       \underset{x}{\cdot}}
  \end{xy}
\hspace{5em}
  \begin{xy}
    \xymatrix{\overset{\makebox[0pt]{$\scriptstyle{\sce x_0}\ \ \
          $}}{\cdot} 
         \ar[rr]^{x}
         \ar[d]_{\unit{\sce x_0}} &&
      \overset{\makebox[0pt]{$\ \ \ \scriptstyle{\tge x_0}$}}{\cdot} 
      \ar[d]^{\unit{\tge x_0}}
       \ar@{=>}[lld]_{\unit x}\\
\underset{\makebox[0pt]{$\scriptstyle{\sce x_0}\ \ \ $}}{\cdot}
        \ar[rr]_x
& &\underset{\makebox[0pt]{$\ \ \ \scriptstyle{\tge x_0}$}}{\cdot}  }
  \end{xy}
\hspace{5em}
  \begin{xy}
    \xymatrix{\ar@3{.>}(14,-9)*{};(7,-9)*{}|(0.4){\unit x}
                      \ar@2{->}(11,1.5)*{};(11,-1.5)*{}^{x}
                       \ar@2{.>}(11,-16)*{};(11,-19)*{}^{x}
                     \overset{\makebox[0pt]{$\scriptstyle{\sce x_0}\ \ \ $}}{\cdot}
                       \ar@/^1pc/[rr]|(.3){\sce x_1}
                      \ar@/_1pc/[rr]|(.25){\tge x_1}
                      \ar[d]_{\unit{\sce x_0}}
      &&\overset{\makebox[0pt]{$\ \ \
          \scriptstyle{\tge x_0}$}}{\cdot}
                      \ar[d]^{\unit{\tge x_0}}
                       \ar@2{->}@/^1pc/[lld]|(.33){\, \unit{\tge x_1}}
                       \ar[d]\ar@2{.>}@/_1pc/[lld]|(.65){\unit{\sce x_1} \,\:}\\
           \underset{\makebox[0pt]{$\scriptstyle{\sce x_0}\ 
                            \ \ $}}{\cdot} 
                        \ar@{.>}@/^1pc/[rr]|(.75){\sce x_1}
                         \ar@/_1pc/[rr]|(.7){\tge x_1} 
      && \underset{\makebox[0pt]{$\ \ \ \scriptstyle{\tge x_0}$}}{\cdot}}
  \end{xy}
\end{displaymath}
\end{examples}

\goodbreak

\begin{definition}[oplax transformations] \label{def:homot} \ %

\noindent
If $f, g : C \to D$ are two \oo-functors, an \ndef{oplax transformation $\theta$ from $f$ to} $g$
is an \oo-functor $\theta : C \to \Cyl D$ making the following diagram commutative:
\[
    \xymatrix{
      & D \\
      C \ar[ur]^f \ar[dr]_g \ar[r]^-\theta & \Cyl{D} \ar[u]_{\Top_D} \ar[d]^{\Bot_D} \\
      & D}
\]
\end{definition}

\begin{paragr}\label{paragr:def_eq_homot}
More concretely, if $\theta$ is an oplax transformation from $f$ to $g$,
then we get an $n$-cylinder $\theta(x) : f(x) \cto g(x)$ in $D$ for each $n$-cell $x$ in $C$.
Its principal cell $\theta_x = \ppal{\theta(x)}$ is an $(n{+}1)$-cell~in~$D$, with the following source and target:
\[
  \theta_x : \theta_{\tge{x}_{n-1}} \!\! \comp{n-1} \cdots \comp 1 \theta_{\tge{x}_0} \comp 0 f(x)
    \to g(x) \comp 0 \theta_{\sce{x}_0} \comp 1 \cdots \comp{n-1} \theta_{\sce{x}_{n-1}}.
\]
By \oo-functoriality of $\theta$ and by construction of the
\oo-category~$\Cyl D$, the following two equalities hold for $n > p$, for
any $n$-cells $x, y$ such that $\tge x_p = \sce y_p$, and for any cell $u$:
\[
\begin{array}{c}
\theta_{y \comp{p} x} = \left( g(\tge{y}_{p+1}) \comp 0 \theta_{\sce{x}_0} \comp 1 \cdots \comp{p-1} \theta_{\sce{x}_{p-1}} \!\! \comp p \theta_x \right)
\comp{p+1} \left( \theta_y \comp p \theta_{\tge{y}_{p-1}} \!\! \comp{p-1} \cdots \comp 1 \theta_{\tge{y}_0} \comp 0 f(\sce{x}_{p+1}) \right), \myspace \\
\theta_{\unit u} = \unit{\theta_u}.
\end{array}
\]
Conversely, if for each $n$-cell $x$ in $C$, $\theta_x$ is an $(n{+}1)$-cell in $D$ with the above source and target, and if the above two axioms hold,
then we get a unique oplax transformation $\theta$ from $f$ to $g$, which is defined as follows for each $n$-cell $x$ in $C$:
\[
\eps{\theta(x)}_i = \theta_{\eps{x}_i} \mbox{ for } i < n, \qquad
\ppal{\theta(x)} = \theta(x)_n = \theta_x.
\]
In other words, $\theta$ can be reconstructed from the $\theta_x$.
See \cite[Section B.2]{AraMaltsiJoint} for more details.
\end{paragr}

\begin{example}
For any \oo-category $C$, the \oo-functor $\Triv : C \to \Cyl C$ mapping any cell $x$ in $C$ to the trivial cylinder $\Triv(x) : x \cto x$
defines an oplax transformation from $\id C$ to itself, which is given by the following $(n{+}1)$-cell for each $n$-cell $x$ in $C$:
\[
\Triv_x = \unit x : x \to x.
\]
Hence, we get a natural transformation $\Triv : \id \ocat \to \cyl$, which is a common section of $\Top$ and $\Bot$. 
\end{example}

\section{Orientals from the expansion monad}\label{monad}

This section addresses the main goal of this work, namely a construction of the \ndef{cosimplicial object of orientals} $\orient : \scat \to \ocat$,
which is obtained by iterating a monad $\exmon : \ocat \to \ocat$. 
In particular, we get the following definition of orientals:
\[
\norient 0 = \exmon(\emptyset), \norient 1 = \exmon^2(\emptyset), \norient 2 = \exmon^3(\emptyset), \ldots
\]
This monad comes from the forgetful functor from the category of \oo-categories with expansion to the category of \oo-categories.
This notion of \ndef{expansion} is central in this paper:
\begin{itemize}
\item It was first introduced under the name of \oo-\ndef{initial structure} by Burroni in the unpublished paper \cite{BurOrientPreprint}.
\item It was then introduced independently under the name of \ndef{contraction} by the first author and Maltsiniotis in \cite{AraMaltsiCondE},
in their study of homotopical properties of orientals.
\end{itemize}

\subsection{Cones}

\begin{definition}[\oo-category of small cones] \label{def:cone} \ %

\noindent
If $\orig$ is a 0-cell in an \oo-category $C$, which amounts to an
\oo-functor $\orig: \Sing \to C$ where $\Sing$ is the terminal
\oo-category, the \oo-\ndef{category $\Cone C$ of small cones of origin
$\orig$ in} $C$ is given by the following pullback square:
\[
  \begin{xy}
    \xymatrix{\Cone C \, \ar[r]\ar[d]\ar@{}@<-3pt>[rd]|(0.2){\displaystyle{\lrcorner}}& \Cyl C \ar[d]^{\Top_C}\\
                      \Sing \, \ar[r]_{\orig} & C }
  \end{xy}
\]
\end{definition}

\begin{remark}
As the bottom arrow is a monomorphism, so is the top one, and in fact, $\Cone C$ is a full subcategory of $\Cyl C$.
\end{remark}

\begin{paragr}\label{paragr:cone_formulas}
More concretely, an $n$-cell in $\Cone C$ amounts to an $n$-cylinder $\alpha : \Unit n \orig \cto x$ in $C$,
which~is written $\alpha : \orig \cto x$ and called $n$-\ndef{cone of origin} $\orig$.
The $n$-cell $\bt \alpha = x$ is called the \ndef{basis} of the~cone~$\alpha$.

The formulas of paragraph~\ref{paragr:conc} for sources of (auxiliary and principal) cells of cylinders are simpler in the case of cones.
Indeed, an $n$-cone $\alpha: \orig \cto x$ is given by a finite sequence of cells
\[
\sce \alpha_0, \tge \alpha_0, \ldots, \sce \alpha_{n-1}, \tge \alpha_{n-1}, \alpha_n
\]
with the following sources and targets:
\[
\begin{array}{c}
\eps \alpha_0 : \orig \to \eps x_0, \qquad
\eps \alpha_i : \tge \alpha_{i-1} \to \eps x_i \! \comp 0 \sce \alpha_0 \! \comp 1 \cdots \comp {i-1} \sce \alpha_{i-1}
\mbox{ for } 0 < i < n, \myspace \\
\ppal \alpha = \alpha_0 : \orig \to x \mbox{ if } n = 0, \qquad
\ppal \alpha = \alpha_n : \tge \alpha_{n-1} \to x \comp 0 \sce \alpha_0 \! \comp 1 \cdots \comp {n-1} \sce \alpha_{n-1} \mbox{ if } n > 0.
\end{array}
\]
Note that the formulas for targets are unchanged, except that $y$ is replaced by $x$.
\end{paragr}

\begin{examples}\ %
\begin{itemize}
\item If $x$ is a 0-cell, a 0-cone $\alpha : \orig \cto x$ is given by the 1-cell $\ppal \alpha = \alpha_0 : \orig \to x$.
\item If $x$ is a 1-cell, a 1-cone $\alpha : \orig \cto x$ is given by the 1-cells $\eps \alpha_0 : \orig \to \eps x_0$
and the 2-cell $\ppal \alpha = \alpha_1 : \tge \alpha_0 \to x \comp 0 \sce \alpha_0$.
\item If $x$ is a 2-cell, a 2-cone $\alpha : \orig \cto x$ is given by the 1-cells $\eps \alpha_0 : \orig \to \eps x_0$,
the 2-cells $\eps \alpha_1 : \tge \alpha_0 \to \eps x_1 \comp 0 \sce \alpha_0$,
and the 3-cell $\ppal \alpha = \alpha_2 : \tge \alpha_1 \to x \comp 0 \sce \alpha_0 \comp 1 \sce \alpha_1$.
\end{itemize}
\[
  \begin{xy}
   \xymatrix{\overset{\orig}{\cdot} \ar[d]^{\alpha_0}\\
                       \underset{x}{\cdot}}
  \end{xy}
\hspace{5em}
  \begin{xy}
    \xymatrix @=1.5em{
&\overset{\orig}{\cdot} 
           \ar[ldd]_{\sce \alpha_0} 
            \ar[rdd]^{\tge \alpha_0}
         &
        \ar@2{->}(11,-13)*{};(7,-16)*{}_(.3){\alpha_1}\\
&&\\
\underset{\makebox[0pt]{$\scriptstyle{\sce x_0}\ \ \ $}}{\cdot} 
\ar[rr]_x
&&
\underset{\makebox[0pt]{$\ \ \ \scriptstyle{\tge x_0}$}}{\cdot}  }
  \end{xy}
\hspace{5em}
  \begin{xy}
    \xymatrix @=1.2em{
&&
\overset{\orig}{\cdot} 
          \ar[lldddd]_{\sce \alpha_0} 
           \ar[rrdddd]^{\tge \alpha_0}
            \ar@2{.>}(15,-30)*{};(15,-34)*{}^(.3){x}
&&\\
&&&& \\
&&&         
             \ar@2@/^1pc/[ddlll]|(.27){\tge \alpha_1}
              \ar@2{.>}@/_.8pc/[ddlll]|(.6){\sce \alpha_1}
              \ar@3{.>}(15,-23)*{};(9,-23)*{}_(.4){\alpha_2}&
\\
&&&&
\\
     \underset{\makebox[0pt]{$\scriptstyle{\sce x_0}\ \ \ $}}{\cdot}
              \ar@{.>}@/^1pc/[rrrr]|(.6){\sce x_1}
               \ar@/_1pc/[rrrr]|(.6){\tge x_1}
&&&&
      \underset{\makebox[0pt]{$\ \ \ \scriptstyle{\tge x_0}$}}{\cdot}  }
  \end{xy}
\]
\end{examples}

\begin{paragr}
The formulas of definition~\ref{def:comp} for $p$-composition are simpler in the case of cones.
Indeed, if $n > p$, if $x, y$ are $n$-cells such that $\tge x_p = \sce y_p$,
and if $\alpha : \orig \cto x, \beta : \orig \cto y$ are $n$-cones such that $\TGE_p \alpha = \SCE_p \beta$,
the $p$-composition $\gamma = \beta \comp p \alpha: \orig \cto y \comp p x$ is the $n$-cone given by the following cells:
\[ 
\begin{array}{c}
\eps \gamma_i = \eps \alpha_i = \eps \beta_i \mbox { for } i < p, \qquad
\sce \gamma_p = \sce \alpha_p, \qquad
\tge \gamma_p = \tge \beta_p, \myspace \\
\eps \gamma_{p+1} = \eps y_{p+1} \comp 0 \sce \alpha_0 \comp 1 \cdots \comp{p-1} \sce \alpha_{p-1} \comp p \eps \alpha_i \comp{p+1} \eps \beta_{p+1}
\mbox { if } p+1 < n, \myspace \\
\eps \gamma_i = \tge y_{p+1} \comp 0 \sce \alpha_0 \comp 1 \cdots \comp{p-1} \sce \alpha_{p-1} \comp p \eps \alpha_i \comp{p+1} \eps \beta_i
\mbox { for } p+1 < i < n, \myspace \\
\ppal \gamma = \gamma_n = \tge y_{p+1} \comp 0 \sce \alpha_0 \comp 1 \cdots \comp{p-1} \sce \alpha_{p-1} \comp p \alpha_n \comp{p+1} \beta_n.
\end{array}
\]
On the other hand, the formulas for $p$-units are unchanged.
\end{paragr}

\goodbreak

\begin{examples}\ %
\begin{itemize}
\item If $x, y$ are 1-cells such that $\tge x_0 = \sce y_0$, and if $\alpha : \orig \cto x, \beta : \orig \cto y$ are 1-cones
such that $\tge \alpha_0 = \sce \beta_0$, the 0-composition $\gamma = \beta \comp 0 \alpha : \orig \cto y \comp 0 x$
is given by the 1-cells $\sce \gamma_0 = \sce \alpha_0$ and $\tge \gamma_0 = \tge \beta_0$,
and the 2-cell $\gamma_1 = y \comp 0 \alpha_1 \comp 1 \beta_1$.
\item If $x, y$ are 2-cells such that $\tge x_0 = \sce y_0$, and if $\alpha : \orig \cto x, \beta : \orig \cto y$ are 2-cones
such that $\tge \alpha_0 = \sce \beta_0$, the 0-composition $\gamma = \beta \comp 0 \alpha : \orig \cto y \comp 0 x$
is given by the 1-cells $\sce \gamma_0 = \sce \alpha_0$ and $\tge \gamma_0 = \tge \beta_0$,
the 2-cells $\eps \gamma_1 = \eps y_1 \comp 0 \eps \alpha_1 \comp 1 \eps \beta_1$,
and the 3-cell $\gamma_2 = \tge y_1 \comp 0 \alpha_2 \comp 1 \beta_2$.
\item If $x, y$ are 2-cells such that $\tge x_1 = \sce y_1$, and if $\alpha : \orig \cto x, \beta : \orig \cto y$ are 2-cones
such that $\eps \alpha_0 = \eps \beta_0$ and $\tge \alpha_1 = \sce \beta_1$,
the 1-composition $\gamma = \beta \comp 1 \alpha : \orig \cto y \comp 1 x$
is given by the 1-cells $\eps \gamma_0 = \eps \alpha_0 = \eps \beta_0$,
the 2-cells $\sce \gamma_1 = \sce \alpha_1$ and $\tge \gamma_1 = \tge \beta_1$,
and the 3-cell $\gamma_2 = y \comp 0 \sce \alpha_0 \comp 1 \alpha_2 \comp 2 \beta_2$.
\end{itemize}
\[
  \begin{xy}
    \xymatrix{
&\overset{\orig}{\cdot} 
        \ar[ldd]_{\sce \alpha_0} 
        \ar[dd] 
        \ar[rdd]^{\tge \beta_0}
       &
      \ar@2{->}(10,-17)*{};(5,-21)*{}_(.1){\alpha_1}
       \ar@2{->}(18,-17)*{};(14,-21)*{}_(.1)*-<3pt>{\scriptstyle{\beta_1}}\\
&&\\
\cdot 
\ar[r]_x
& \cdot
\ar[r]_y
&\cdot}
  \end{xy}
\hspace{5em}
  \begin{xy}
    \xymatrix{
&&
     \overset{\orig}{\cdot} 
          \ar[dd]
           \ar[lldd]_{\sce \alpha_0} 
           \ar[rrdd]^{\tge \beta_0}
            \ar@2{.>}(12,-23)*{};(12,-26)*{}^{x}
            \ar@2{.>}(32,-23)*{};(32,-26)*{}^{y}
&&\\
& &         \ar@2@/^{1pc}/[dll]|(.3){\tge \alpha_1}
              \ar@2{.>}@/_/[dll]_(.2){\sce \alpha_1}
              \ar@3{.>}(14,-19)*{};(9,-19)*{}_(.4){\alpha_2}
               \ar@3{.>}(31,-19)*{};(26,-19)*{}_(.3){\beta_2}
&
\ar@2@/^{1pc}/[dl]|(.3){\tge \beta_1}
              \ar@2{.>}@/_/[dl]_(.3){\sce \beta_1}
& \\
           \cdot
              \ar@{.>}@/^1pc/[rr]
               \ar@/_1pc/[rr]
&&
      \cdot  
\ar@{.>}@/^1pc/[rr]|(.7){\sce y_1}
               \ar@/_1pc/[rr]|(.7){\tge y_1}
&& \cdot }
  \end{xy}
\hspace{5em}
  \begin{xy}
    \xymatrix @=1.2em{
&&
\overset{\orig}{\cdot} 
          \ar[lldddd]_{\sce \alpha_0=\sce \beta_0}
           \ar[rrdddd]^{\tge \alpha_0=\tge \beta_0}
             \ar@2{.>}(14,-27)*{};(14,-29)*{}^(.3){x}
            \ar@2{.>}(14,-31)*{};(14,-33)*{}^(.3){y}
&&\\
&&&& \\
&&&         
             \ar@2@/^{1.1pc}/[ddlll]|(.2){\tge \beta_1}
              \ar@2{.>}@/_{.9pc}/[ddlll]|(.3){\sce \alpha_1}
               \ar@2{.>}[ddlll]
\ar@3{.>}(16,-23)*{};(13,-23)*{}_(.4){\beta_2}
\ar@3{.>}(10,-23)*{};(6,-23)*{}_(.4){\alpha_2}              
&
\\
&&&&
\\
     \cdot
              \ar@{.>}@/^{1pc}/[rrrr]
               \ar@{.>}[rrrr]
               \ar@/_{1pc}/[rrrr]
&&&&
     \cdot  }
  \end{xy}
\]
\begin{itemize}
\item If $\alpha : \orig \cto x$ is a 0-cone, the 0-unit $\unit \alpha : \orig \cto \unit x$ is given by the cells $\alpha_0, \alpha_0, \unit{\alpha_0}$.
\item If $\alpha : \orig \cto x$ is a 1-cone, the 1-unit $\unit \alpha : \orig \cto \unit x$ is given by the cells
$\sce \alpha_0, \tge \alpha_0, \alpha_1, \alpha_1, \unit{\alpha_1}$.
\end{itemize}
\[
  \begin{xy}
    \xymatrix @=1.5em{
&\overset{\orig}{\cdot} 
           \ar[ldd]_{\alpha_0} 
            \ar[rdd]^{\alpha_0}
         &
        \ar@2{->}(11,-13)*{};(7,-16)*{}_(.3){\unit{\alpha_0}}\\
&&\\
\underset{\makebox[0pt]{$\scriptstyle{x}\ \ \ $}}{\cdot} 
\ar[rr]_{\unit x}
&&
\underset{\makebox[0pt]{$\ \ \ \scriptstyle{x}$}}{\cdot}  }
  \end{xy}
\hspace{5em}
  \begin{xy}
    \xymatrix @=1.2em{
&&
\overset{\orig}{\cdot} 
          \ar[lldddd]_{\sce \alpha_0} 
           \ar[rrdddd]^{\tge \alpha_0}
            \ar@2{.>}(15,-30)*{};(15,-34)*{}^(.3){\unit x}
&&\\
&&&& \\
&&&         
             \ar@2@/^1pc/[ddlll]|(.27){\alpha_1}
              \ar@2{.>}@/_.8pc/[ddlll]|(.6){\alpha_1}
              \ar@3{.>}(15,-23)*{};(9,-23)*{}_(.4){\unit{\alpha_1}}&
\\
&&&&
\\
     \underset{\makebox[0pt]{$\scriptstyle{\sce x_0}\ \ \ $}}{\cdot}
              \ar@{.>}@/^1pc/[rrrr]|(.6){x}
               \ar@/_1pc/[rrrr]|(.6){x}
&&&&
      \underset{\makebox[0pt]{$\ \ \ \scriptstyle{\tge x_0}$}}{\cdot}  }
  \end{xy}
\]
\end{examples}

\begin{remark}
The only trivial $n$-cylinder defining an $n$-cone of origin $\orig$ is $\Triv(\Unit n \orig) : \Unit n \orig \cto \Unit n \orig$,
but there is a weaker notion, which is used to define the notion of expansion:
\end{remark}

\begin{definition}[degenerate cones]\
  
 \noindent 
An $n$-cone $\alpha : \orig \cto x$ is called \ndef{degenerate} if its principal cell $\ppal \alpha = \alpha_n$ is a unit,
as well as its \ndef{negative auxiliary cells} $\sce \alpha_0, \ldots, \sce \alpha_{n-1}$.
\end{definition}

\begin{remark}
  If $\alpha : \orig \cto x$ is a degenerate $n$-cone, then $\sce
  x_0=\orig$, as $\sce\alpha_0$ is an identity.
\end{remark}

\begin{lemma}\label{lemma:unique_deg_cone}
  Let $x$ be an $n$-cell such that $\sce x_0=\orig$. There is a unique
  degenerate cone $\alpha : \orig \cto x$ of base $x$. This cone is
  determined by the following cells:
  \[
    \begin{aligned}
      \sce\alpha_i & = \unit{\sce x_i} \quad\text{for $0 \le i \le n - 1$,}
    \\
      \tge\alpha_i & =
      \begin{cases}
        \sce x_{i+1} & \text{for $0 \le i \le n - 2$,} \\
        x & \text{for $i = n-1$,}
      \end{cases}
      \\
      \alpha_n & = \unit{x}.
    \end{aligned}
  \]
\end{lemma}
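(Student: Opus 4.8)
The plan is to establish existence by exhibiting the cone given by the formulas displayed in the statement and checking that it satisfies the source and target equations of paragraph~\ref{paragr:cone_formulas}, and to establish uniqueness by showing that every cell of a degenerate cone of base~$x$ is forced, proceeding by induction on the level of the auxiliary cells. The case $n = 0$ is immediate and is treated separately: a degenerate $0$-cone $\alpha : \orig \cto x$ is a $1$-cell $\alpha_0 : \orig \to x$ that is a unit; since $x$ is a $0$-cell, $\sce x_0 = x$, so the hypothesis forces $x = \orig$, and the only unit $1$-cell with source $\orig$ is $\unit\orig = \unit x$, which indeed goes from $\orig$ to $x$.

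For existence with $n > 0$, I would set $\sce\alpha_i := \unit{\sce x_i}$ for $0 \le i \le n-1$, $\tge\alpha_i := \sce x_{i+1}$ for $0 \le i \le n-2$, $\tge\alpha_{n-1} := x$ and $\alpha_n := \unit x$, and verify, one family at a time, that these cells have the sources and targets prescribed in paragraph~\ref{paragr:cone_formulas}. The hypothesis $\sce x_0 = \orig$ is what guarantees that $\sce\alpha_0 = \unit{\sce x_0}$ has source $\orig$; the remaining verifications rely on the globular identities $\SCE\SCE = \SCE\TGE$ and $\TGE\SCE = \TGE\TGE$, iterated to identify the $j$-sources and $j$-targets of $\sce x_i$, $\tge x_i$ and $x$ with the appropriate $\sce x_j$ and $\tge x_j$, together with the unit laws, which collapse the iterated composites $\eps x_i \comp 0 \unit{\sce x_0} \comp 1 \cdots \comp{i-1}\unit{\sce x_{i-1}}$ and $x \comp 0 \unit{\sce x_0} \comp 1 \cdots \comp{n-1}\unit{\sce x_{n-1}}$ to $\eps x_i$ and to $x$. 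Degeneracy is then immediate, since $\sce\alpha_0, \ldots, \sce\alpha_{n-1}$ and $\alpha_n$ are units by construction.

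For uniqueness, let $\alpha : \orig \cto x$ be an arbitrary degenerate $n$-cone with $\bt\alpha = x$ and $n > 0$. The $1$-cell $\sce\alpha_0$ is a unit with source $\orig$ and target $\sce x_0 = \orig$, so $\sce\alpha_0 = \unit\orig = \unit{\sce x_0}$. I would then prove by induction on $i$ that, for $1 \le i \le n-1$, one has $\tge\alpha_{i-1} = \sce x_i$ and $\sce\alpha_i = \unit{\sce x_i}$: granting $\sce\alpha_j = \unit{\sce x_j}$ for all $j < i$, the cell $\sce\alpha_i$ is a unit of dimension $i+1$, hence equal to $\unit v$ for some $i$-cell $v$; comparing $\unit v$ with the constraint $\sce\alpha_i : \tge\alpha_{i-1} \to \sce x_i \comp 0 \sce\alpha_0 \comp 1 \cdots \comp{i-1}\sce\alpha_{i-1}$ gives $\tge\alpha_{i-1} = v$ on sources and $v = \sce x_i \comp 0 \unit{\sce x_0} \comp 1 \cdots \comp{i-1}\unit{\sce x_{i-1}} = \sce x_i$ on targets. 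This also pins down $\tge\alpha_j = \sce x_{j+1}$ for $0 \le j \le n-2$. Finally, $\alpha_n$ is a unit of dimension $n+1$, say $\alpha_n = \unit w$; comparing with $\alpha_n : \tge\alpha_{n-1} \to x \comp 0 \sce\alpha_0 \comp 1 \cdots \comp{n-1}\sce\alpha_{n-1}$ and using $\sce\alpha_j = \unit{\sce x_j}$ for all $j$ yields $\tge\alpha_{n-1} = w = x$, so $\tge\alpha_{n-1} = x$ and $\alpha_n = \unit x$. Hence $\alpha$ coincides with the cone exhibited in the existence part.

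The only genuine difficulty here is bookkeeping: one must check carefully that each source/target comparison and each unit-law collapse is legitimate, for which the globular identities are invoked repeatedly; there is no conceptual obstacle. As an alternative, one could argue by induction on $n$ using the recursive description of cones underlying Definition~\ref{def:cyl}: once $\sce\alpha_0 = \unit\orig$ is forced, a degenerate $n$-cone $\orig \cto x$ reduces to the $1$-cell $\tge\alpha_0 : \orig \to \tge x_0$ together with a degenerate $(n{-}1)$-cone of base $x$ in the \oo-category $\homset C \orig {\tge x_0}$, and the induction hypothesis forces $\tge\alpha_0 = \sce x_1$ and determines the remaining data; but the direct verification sketched above is self-contained.
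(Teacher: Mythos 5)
Your proof is correct, and it is organized differently from the paper's. The paper argues by induction on the dimension $n$: given a degenerate $(n{+}1)$-cone $\alpha$ of base $x$, it observes that $\SCE\alpha$ is a degenerate $n$-cone of base $\sce x_n$, invokes the induction hypothesis to pin down all cells up to index $n-1$ together with $\sce\alpha_n$, and then forces $\tge\alpha_n = x$ and $\alpha_{n+1} = \unit x$ from the constraint on the principal cell; existence is handled at the end of the same induction by checking the displayed formulas against the relations of paragraph~\ref{paragr:cone_formulas}. You instead stay in a fixed dimension and run the forcing argument directly, by induction on the index $i$ of the auxiliary cells, using the flattened description of paragraph~\ref{paragr:cone_formulas}: $\sce\alpha_i$ is a unit whose source is $\tge\alpha_{i-1}$ and whose target collapses (by the unit laws and globular identities, thanks to the already-forced $\sce\alpha_j = \unit{\sce x_j}$ for $j<i$) to $\sce x_i$, which pins down both cells, and the principal cell then forces $\tge\alpha_{n-1} = x$, $\alpha_n = \unit x$. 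The two arguments are the same forcing mechanism, recursively packaged in the paper versus unrolled in your version; yours avoids appealing to lower-dimensional instances of the lemma at the cost of the explicit bookkeeping with iterated composites and padded units (which you correctly identify as the only delicate point), while the paper's recursive route --- essentially the alternative you sketch in your last paragraph --- delegates that bookkeeping to the relation between $\alpha$ and $\SCE\alpha$ recorded in paragraph~\ref{paragr:conc}. Your handling of the base case and of the hypothesis $\sce x_0 = \orig$ agrees with the paper's.
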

\begin{proof}
  By induction  on $n$.
  \begin{itemize}
  \item For $n=0$, there is a unique degenerate cone $\alpha:\orig\cto
    x$ determined by $x=\orig$ and $\alpha_0=\unit\orig$.
  \item Suppose that the statement holds up to dimension $n$ and let us show
    that the it holds in dimension $n+1$. Thus, let $x$ be an
    $(n{+}1)$-cell such that $\sce x_0=\orig$. For the uniqueness
    part, suppose that
    $\alpha:\orig\cto x$ is a degenerate $(n{+}1)$-cone and consider the
    $n$-cone $\beta=\SCE\alpha:\orig\cto\sce
    x_n$. By~\ref{paragr:conc}, $\sce\beta_i=\sce\alpha_i$ for
    $0 \le i \le n-1$ and $\beta_n=\sce\alpha_n$, so that all
    negative auxiliary cells of $\beta$ are identities, as well as its
    principal cell $\beta_n$. By induction hypothesis, $\beta$ is the
    unique degenerate $n$-cone of base $\sce x_n$, and we know
    by~\ref{paragr:conc} that $\eps\alpha_i=\eps\beta_i$ for
    $0 \le i \le n-1$ and $\epsilon=\pm$. Now by~\ref{paragr:cone_formulas}
    the principal cell of $\alpha$ is
    \[\alpha_{n+1} : \tge\alpha_n\to x\comp 0\sce\alpha_0\comp
    1\cdots\comp{n}\sce\alpha_n\]
    and because $\alpha$ is degenerate, the right-hand side of the
    previous formula is just $x$, whereas $\alpha_{n+1}$ is an
    identity so that $\tge\alpha_n=x$ and $\alpha_{n+1}=\unit x$. This
    proves uniqueness. Finally, the above formulas for $\eps\alpha_i$,
    $0\leq i\leq n$, and $\alpha_{n+1}$ satisfy the relations
    of~\ref{paragr:cone_formulas} and define a degenerate $(n{+}1)$-cone.
    \qedhere
  \end{itemize}
\end{proof}

\begin{examples}
For $n = 0, 1, 2$, we get the following degenerate cones:
\[
  \begin{xy}
   \xymatrix{\overset{\orig}{\cdot} \ar[d]^{\unit \orig}\\
                       \underset{\orig}{\cdot}}
  \end{xy}
\hspace{5em}
  \begin{xy}
    \xymatrix @=1.5em{
&\overset{\orig}{\cdot}
           \ar[ldd]_{\unit{\orig}} 
            \ar[rdd]^{x}
         &
        \ar@2{->}(11,-13)*{};(7,-16)*{}_(.3){\unit x}\\
&&\\
\underset{\makebox[0pt]{$\scriptstyle{\orig \, = \, \sce x_0}\ \ \ $}}{\cdot} 
\ar[rr]_x
&&
\underset{\makebox[0pt]{$\ \ \ \scriptstyle{\tge x_0}$}}{\cdot}  }
  \end{xy}
\hspace{5em}
  \begin{xy}
    \xymatrix @=1.2em{
&&
\overset{\orig}{\cdot} 
          \ar[lldddd]_{\unit{\orig}} 
           \ar[rrdddd]^{\sce x_1}
            \ar@2{.>}(15,-30)*{};(15,-34)*{}^(.3){x}
&&\\
&&&& \\
&&&         
             \ar@2@/^1pc/[ddlll]|(.4)*+{\scriptstyle{x}}
              \ar@2{.>}@/_.8pc/[ddlll]|(.3){\unit{\sce x_1}}
              \ar@3{.>}(16,-23)*{};(8,-23)*{}|(.4){\unit x}&
\\
&&&&
\\
     \underset{\makebox[0pt]{$\scriptstyle{\orig \, = \, \sce x_0}\ \ \ $}}{\cdot}
              \ar@{.>}@/^1pc/[rrrr]|(.75){\sce x_1}
               \ar@/_1pc/[rrrr]|(.75){\tge x_1}
&&&&
      \underset{\makebox[0pt]{$\ \ \ \scriptstyle{\tge x_0}$}}{\cdot}  }
  \end{xy}
\]
\end{examples}

\begin{paragr}
We write $\ocatpt$ for the \ndef{category of pointed \oo-categories}:
\begin{itemize}
\item An object is a pair $(C, \orig)$ where $C$ is an \oo-category,
and $\orig$ is a 0-cell in $C$, called \ndef{origin}.
\item A morphism $f : (C, \orig) \to (D, \orig)$ is an \oo-functor $f : C \to D$ preserving the origin.
\end{itemize}
\end{paragr}

\begin{paragr}
For any such morphism, we get two \oo-functors $f : C \to D$ and $\Cyl f : \Cyl  C \to \Cyl D$,
which induce an \oo-functor $\cone(f) : \Cone C \to \Cone D$ by the pullback square of definition~\ref{def:cone}.
Hence, we get a functor $\cone : \ocatpt \to \ocat$.

The natural transformation $\Bot : \cyl \to \id \ocat$ induces a natural transformation $\Bas : \cone \to \proj$,
where $\proj : \ocatpt \to \ocat$ stands for the forgetful functor.
This means that, for any pointed \oo-category $(C, \orig)$, we get an \oo-functor $\Bas_{(C, \orig)} : \Cone C \to C$, which maps any cone to its basis.
In practice, we shall simply write $\Bas_C$ for this \oo-functor.
\end{paragr}

\begin{paragr} \label{paragr:homot_cone}
The \oo-category of small cones can be used to describe particular oplax transformations.
Indeed, by the pullback square of definition~\ref{def:cone}, if $C, D$ are \oo-categories and $\orig$ is a 0-cell in $D$,
an oplax transformation from the constant \oo-functor $\orig : C \to D$ to another \oo-functor $f : C \to D$
amounts to an \oo-functor $\theta : C \to \Cone D$ making the following triangle commutative:
\[
    \xymatrix{
      C \ar[r]^-\theta \ar[dr]_f & \Cone{D} \ar[d]^{\Bas_D} \\
      & D
    }
\]
In particular, an oplax transformation from the constant \oo-functor $\orig : C \to C$ to the identity \oo-functor $\id C : C \to C$
amounts to a section $\theta : C \to \Cone C$ of $\Bas_C : \Cone C \to C$.
\end{paragr}

\begin{paragr} \label{paragr:def_eq_homot_orig}
By the formulas of paragraph~\ref{paragr:def_eq_homot}, such an oplax transformation amounts to
the data of an $(n{+}1)$-cell $\theta_x$ for each $n$-cell $x$ in $C$, with the following source and target:
\[
\theta_x : \orig \to x \mbox{ for } n = 0, \qquad
\theta_x : \theta_{\tge{x}_{n-1}} \!\! \to x \comp 0 \theta_{\sce{x}_0} \comp 1 \cdots \comp{n-1} \theta_{\sce{x}_{n-1}} \mbox{ for } n > 0,
\]
such that the following two axioms hold for $n > p$, for any $n$-cells $x, y$ such that $\tge x_p = \sce y_p$, and for any cell $u$:
\[
\theta_{y \comp p x} = \tge{y}_{p+1} \comp 0 \theta_{\sce{x}_0} \comp 1 \cdots \comp{p-1} \theta_{\sce{x}_{p-1}} \!\! \comp p \theta_x \comp{p+1} \theta_y, \qquad
\theta_{\unit u} = \unit{\theta_u}.
\]
\end{paragr}

\subsection{Expansion monad}

\begin{definition}[expansions] \ %

\noindent
If $C$ is an \oo-category, an \ndef{expansion} on $C$ consists of:
\begin{itemize}
\item a $0$-cell $\orig$ in $C$, called \ndef{origin},
\item a section $\econe : C \to \Cone C$ of the \oo-functor $\Bas_C : \Cone C \to C$, called \ndef{expanding homotopy},
\end{itemize}
such that the cone $\econe(x)$ is degenerate whenever $x$ is of the form $\orig$ or $\ppal{\econe(u)}$ for some $u$.
\end{definition}

\begin{paragr} \label{paragr:exp}
By paragraph~\ref{paragr:homot_cone}, $\econe$ is an oplax transformation from the constant \oo-functor $\orig : C \to C$
to the identity \oo-functor $\unit C : C \to C$.
More concretely, it amounts by paragraph~\ref{paragr:def_eq_homot_orig} to the data of
an $(n{+}1)$-cell $\econe_x$ for each $n$-cell $x$ in $C$, with the following source and target:
\[
\econe_x : \orig \to x \mbox{ for } n = 0, \qquad
\econe_x : \econe_{\tge{x}_{n-1}} \!\! \to x \comp 0 \econe_{\sce{x}_0} \comp 1 \cdots \comp{n-1} \econe_{\sce{x}_{n-1}} \mbox{ for } n > 0,
\]
such that the following four axioms hold for $n > p$, for any $n$-cells $x, y$ such that $\tge x_p = \sce y_p$, and for any cell $u$:
\[
\econe_{y \comp p x} =
\tge{y}_{p+1} \comp 0 \econe_{\sce{x}_0} \comp 1 \cdots \comp{p-1} \econe_{\sce{x}_{p-1}} \!\! \comp p \econe_x \comp{p+1} \econe_y, \qquad
\econe_{\unit u} = \unit{\econe_u}, \qquad
\econe_{\econe_u} = \unit{\econe_u}, \qquad
\econe_\orig = \unit \orig.
\]
We will sometimes call the first two axioms the \ndef{functoriality
conditions} and the last two axioms the \ndef{degeneracy conditions}.
\end{paragr}

\begin{paragr}
We write $\expcat$ for the \ndef{category of \oo-categories with expansion}:
\begin{itemize}
\item An object is a triple $(C, \orig, \econe)$, where $C$ is an \oo-category, and $\orig, \econe$ define an expansion on~$C$.
\item A morphism $f : (C, \orig, \econe) \to (D, \orig, \econe)$ is an \oo-functor $f : C \to D$ preserving the structure,
which means that $f(\orig) = \orig$ and the following square commutes:
\[
  \xymatrix{
     C \ar[r]^-\econe \ar[d]_f & \Cone C \ar[d]^{\cone(f)} \\
     D \ar[r]_-{\econe} & \Cone D \\
  }
 \]
\end{itemize}
\end{paragr}

\begin{proposition} \label{prop:free_exp}
The obvious forgetful functor $\fgf : \expcat \to \ocat$ admits a left adjoint.
\end{proposition}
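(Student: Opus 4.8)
The plan is to construct the left adjoint explicitly, by building the free expansion on an \oo-category freely, generator by generator and dimension by dimension, and then to invoke the general adjoint-functor machinery in the cleanest possible form. Concretely, given an \oo-category $C$, I would form a new \oo-category $\ext C$ by adjoining a fresh $0$-cell $\orig$ together with, inductively on dimension, a new cell $\econe_x$ for every cell $x$ already present (including the cells $\orig$ and $\econe_u$ newly created at lower stages), subject to the prescribed source and target from paragraph~\ref{paragr:exp} and modulo the four axioms listed there (functoriality in the two compositions and units, plus the two degeneracy conditions). This is exactly a transfinite polygraphic cell-attachment: at stage $n$ one has an $n$-category, one specifies a set of new $(n{+}1)$-generators $\{\econe_x\}$ with source and target among the $n$-cells, applies the functor $\Left n$ of Lemma~\ref{lemma:universal}, and then imposes the finitely-generated-per-cell congruence coming from the axioms. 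Since $\ocat$ is cocomplete, all the required pushouts and quotients exist, and the construction is visibly functorial in $C$.

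The key steps, in order, are as follows. First, make precise the above inductive construction of $\ext C$ and equip it with a canonical origin and a section $\econe\colon \ext C \to \Cone{\ext C}$ of $\Bas_{\ext C}$, using paragraph~\ref{paragr:def_eq_homot_orig} to package the chosen cells $\econe_x$ into an oplax transformation and hence a section; the degeneracy conditions are imposed by construction, so $(\ext C, \orig, \econe)$ is genuinely an object of $\expcat$. Second, produce the unit $\eta_C\colon C \to \fgf(\ext C)$ as the evident inclusion of the original cells (which is an \oo-functor by construction). Third, verify the universal property: given $(D, \orig_D, \econe^D)$ in $\expcat$ and an \oo-functor $f\colon C \to D$, one extends $f$ over $\ext C$ by sending $\orig \mapsto \orig_D$ and, inductively, $\econe_x \mapsto \econe^D_{f(x)}$; one checks dimension by dimension, using Lemma~\ref{lemma:universal} at each stage, that this is well-defined on generators (the source/target match because $f$ is an \oo-functor and $\econe^D$ has the right source and target in $D$), descends to the quotient (the four axioms hold in $D$ by hypothesis), and is the unique \oo-functor of expansions extending $f$. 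This is precisely the statement that $\fgf$ has a left adjoint with unit $\eta$.

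Alternatively — and this is probably the shortest route to write down rigorously — one observes that $\expcat$ is a category of models of a limit sketch (or, more concretely, that it is the category of algebras for a finitary monad on $\ocat$, since an expansion is defined by operations $x \mapsto \econe_x$ raising dimension by one together with equational axioms involving only finitely many cells at a time), and that the forgetful functor to $\ocat$ creates all limits and all filtered colimits; since $\ocat$ is locally presentable, the adjoint functor theorem then yields the left adjoint immediately. Either way, the one genuinely delicate point — the \textbf{main obstacle} — is the well-definedness of the inductive construction of $\ext C$: one must check that at each dimension the source and target prescribed for the new generators $\econe_x$ are themselves already-constructed cells of the correct dimension with matching iterated sources and targets (the globular conditions), and that the congruence generated by the four axioms is compatible across dimensions, i.e.\ does not force unexpected identifications among the original cells of $C$, so that $\eta_C$ really is injective and $C \hookrightarrow \fgf(\ext C)$. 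Once that bookkeeping is in place, everything else is the routine verification of the triangle identities (or of the hypotheses of the adjoint functor theorem), which I would not spell out in detail.
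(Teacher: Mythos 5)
Your second, ``alternative'' argument is essentially the paper's own proof: the paper exhibits $\ocat$ and $\expcat$ as categories of models of limit sketches $\sketch \emb \esketch$ and invokes a lemma of Lair \cite{LairMonad} --- which is exactly the adjoint-functor statement for accessible/locally presentable situations that you appeal to --- to conclude that the induced forgetful functor $\fgf$ admits a left adjoint. So that route is correct and matches the paper. One caveat: the parenthetical claim that $\expcat$ ``is the category of algebras for a finitary monad on $\ocat$'' cannot serve as a premise here, since the existence of that monad is precisely what is being proved (the paper establishes monadicity only afterwards, again via the sketches); it is the limit-sketch description, not the monad description, that gets the argument off the ground.

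Your primary route --- the explicit generator-by-generator construction of the free expansion on an \emph{arbitrary} \oo-category $C$ --- is where the genuine gap lies, and you have flagged it yourself without closing it. For general $C$ this is not a polygraphic cell attachment: after freely adjoining the cells $\econe_x$ one must quotient by the congruence generated by the functoriality and degeneracy axioms, and it is exactly there that the work sits: one must check that the sources and targets prescribed for the new generators are parallel \emph{in the quotient} (the globular identities only hold modulo the relations already imposed in lower dimensions), that the assignment $x \mapsto \econe_x$ descends to a well-defined section $\econe : \ext C \to \Cone{\ext C}$, and only then can the universal property be verified. The paper avoids this deliberately: the explicit construction of paragraph~\ref{paragr:construct_free_exp} is carried out only when $C$ is freely generated by a polygraph, where Lemma~\ref{lemma:universal} genuinely reduces everything to generators and no quotient is needed (Proposition~\ref{prop:free_exp_pol}), and the general case is then recovered in Remark~\ref{rem:exist_adj} by a density argument together with cocompleteness of $\expcat$, not by quotienting. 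Note also that the injectivity of the unit $\eta_C$, which you single out as part of the ``main obstacle'', is irrelevant to the existence of the left adjoint --- units of adjunctions need not be monomorphisms --- so that particular requirement can simply be dropped.
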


\begin{proof}
This follows from the fact that our structures are ``equational'' in the sense of the theory of sketches.
See for instance \cite{AdamRos} for an introduction to this theory.
See also Remark~\ref{rem:exist_adj} for a more concrete proof of the existence of this left adjoint.

Following the usual definition of \oo-categories, $\ocat$ is indeed sketchable by a limit sketch~$\sketch$:
\begin{itemize}
\item Objects are the following symbols:
\[
\begin{array}{c}
C_n \mbox{ for } n \ge 0, \qquad
C_n \times_{C_p} C_n \mbox{ and } C_n \times_{C_p} C_n \times_{C_p} C_n \mbox{ for } n > p \ge 0, \myspace \\
(C_n \times_{C_p} C_n) \times_{C_q} (C_n \times_{C_p} C_n) \mbox{ for } n > p > q \ge 0.
\end{array}
\]
\item Generators (for morphisms) are given by sources and targets, compositions and units.
\item Relations are given by laws of associativity, unit, interchange and functoriality of units.
\item Distinguished cones are suggested by the notation of objects.
\end{itemize}
Similarly, the equational definition of paragraph~\ref{paragr:exp}, produces a limit sketch $\esketch$ whose models~are \oo-categories with expansion.
More precisely, $\esketch$ is obtained by adding to $\sketch$ the object $\sing$ as~well~as suitable generators, relations and distinguished cones.

Now, the canonical inclusion of $\sketch$ into $\esketch$ defines a morphism of sketches $\iota : \sketch \emb \esketch$,
and the induced functor $\Mod \iota : \Mod \esketch \to \Mod \sketch$ is $\fgf : \expcat \to \ocat$.
By \cite[Lemma~p.~6]{LairMonad}, this implies that $\fgf$ admits a left adjoint.
\end{proof}

\begin{paragraph}
We write $\frf : \ocat \to \expcat$ for this left adjoint, $\exmon = \fgf \frf : \ocat \to \ocat$ for the induced \ndef{expansion monad},
$\mult : T^2 \to T$ for its multiplication, and $\munit : \id \ocat \to T$ for its unit.
\end{paragraph}

It happens that the algebras of this monad are precisely the \oo-categories with expansion:

\begin{proposition}
The forgetful functor $\fgf : \expcat \to \ocat$ is monadic.
\end{proposition}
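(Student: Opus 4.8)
The plan is to apply Beck's monadicity theorem to $\fgf$. By Proposition~\ref{prop:free_exp} the functor $\fgf : \expcat \to \ocat$ has a left adjoint $\frf$, which produces the expansion monad $\exmon$ together with a comparison functor from $\expcat$ to the category of $\exmon$-algebras; monadicity of $\fgf$ amounts precisely to this comparison functor being an equivalence. By the creation form of Beck's theorem, it is enough to prove that $\fgf$ is conservative and that it creates coequalizers of those parallel pairs in $\expcat$ whose image under $\fgf$ admits a split coequalizer in $\ocat$. Conservativity is immediate: if $f : (C, \orig, \econe) \to (D, \orig, \econe)$ is a morphism in $\expcat$ whose underlying \oo-functor is invertible, then $\inv f$ preserves the origin since $f$ does, so $f$ and $\inv f$ are mutually inverse morphisms of pointed \oo-categories; hence $\cone(f)$ is invertible with inverse $\cone(\inv f)$, and post-\ and pre-composing the compatibility square $\cone(f) \circ \econe = \econe \circ f$ with these inverses gives $\econe \circ \inv f = \cone(\inv f) \circ \econe$, so $\inv f$ is again a morphism in $\expcat$.

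For the creation of split coequalizers, let $f, g : (C, \orig, \econe) \to (D, \orig, \econe)$ be a parallel pair in $\expcat$ such that $\fgf f$ and $\fgf g$ admit a split coequalizer $q : D \to E$ in $\ocat$, with section $s : E \to D$ and splitting $t : D \to C$ satisfying $q \circ s = \id E$, $\fgf g \circ t = \id D$ and $\fgf f \circ t = s \circ q$. Since $s \circ q$ is then an idempotent \oo-endofunctor of $D$ and idempotents split in $\ocat$, we may assume, replacing the split coequalizer by an isomorphic one, that $E$ is the sub-\oo-category of $D$ on the cells fixed by $s \circ q$, that $s$ is the inclusion, and that $q$ is $s \circ q$ corestricted to $E$; in particular $q$ is surjective on cells and composition in $E$ is the restriction of composition in $D$. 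An expansion on $E$ turning $q$ into a morphism of $\expcat$ is forced to have origin $q(\orig)$ and to satisfy $q(\econe_d) = \econe_{q(d)}$ for every cell $d$ of $D$; conversely, combining the fact that $f$ and $g$ are morphisms in $\expcat$ with the split-coequalizer identities one checks that $q(\econe_d) = q(\econe_{s(q(d))})$ for every cell $d$ of $D$, so that $\econe_z := q(\econe_{s(z)})$ is well defined for each cell $z$ of $E$ and does obey $q(\econe_d) = \econe_{q(d)}$. The source and target conditions and the four axioms of paragraph~\ref{paragr:exp} for this $\econe$ are then obtained by applying the \oo-functor $q$ to the corresponding identities in $D$ and using the last relation; hence $(E, q(\orig), \econe)$ is an object of $\expcat$, and it is the unique lift of $E$ for which $q$ is a morphism. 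Finally $q$ coequalizes $f$ and $g$ in $\expcat$, and for any morphism $h : (D, \orig, \econe) \to (E', \orig, \econe)$ of $\expcat$ with $h f = h g$ the unique $\ocat$-factorization $\bar h : E \to E'$ of $h$ through $q$ sends $q(\orig)$ to $\orig$ and satisfies $\bar h(\econe_z) = \bar h(q(\econe_{s(z)})) = h(\econe_{s(z)}) = \econe_{h(s(z))} = \econe_{\bar h(z)}$ for every cell $z$ of $E$, so $\bar h$ is a morphism of $\expcat$, necessarily the unique one. Thus $\fgf$ creates the coequalizer, and Beck's theorem yields that $\fgf$ is monadic.

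The one point requiring care is the verification that the transferred homotopy on $E$ meets all the boundary conditions together with the functoriality and degeneracy axioms; this hinges on $q$ being surjective on cells and on composites of cells of $E$ being computed inside $D$ — properties supplied by the idempotent-splitting description of a split coequalizer, but not by an arbitrary epimorphism of \oo-categories. Alternatively, one expects a shorter argument in the spirit of Proposition~\ref{prop:free_exp}: as $\esketch$ is obtained from $\sketch$ by adjoining operations, equations, and the object $\Sing$ only, general facts about models of limit sketches should give the monadicity of $\fgf = \Mod\iota$ directly.
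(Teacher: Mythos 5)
Your proof is correct, but it takes a genuinely different route from the paper. The paper's proof stays inside the theory of sketches: it reuses the limit sketches $\sketch$ and $\esketch$ from Proposition~\ref{prop:free_exp} and observes that the inclusion $\iota : \sketch \emb \esketch$ satisfies the two hypotheses of Lair's criterion (every distinguished cone of $\esketch$ has its base in the image of $\iota$, and the only new object, $\Sing$, is the tip of a distinguished cone), which yields monadicity of $\Mod\iota = \fgf$ in two lines --- exactly the shortcut you anticipate in your closing remark. Your argument instead verifies Beck's theorem by hand: conservativity (which is fine, since $\cone(\inv f)=\inv{\cone(f)}$ by functoriality) and descent of expansions along $\fgf$-split coequalizers, via the key identity $q(\econe_d)=q(\econe_{sq(d)})$ obtained from $ft=sq$, $gt=\id{}$, $qf=qg$ and the fact that $f,g$ commute with the expanding homotopies. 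This buys a self-contained, elementary proof that exhibits concretely how the expansion structure is transported, at the price of checking the source/target conditions and the four axioms of paragraph~\ref{paragr:exp} on $E$. Two small remarks: the idempotent-splitting reduction is not actually needed --- the formula $\econe_z := q(\econe_{s(z)})$ and the verifications only use that $q$ and $s$ are \oo-functors with $qs=\id{}$, together with the displayed identity, so surjectivity of $q$ and the embedding of $E$ into $D$ play no real role; and since you only produce the lift up to isomorphism of the given split coequalizer, what you are strictly invoking is the variant of Beck's theorem ``left adjoint + reflects isomorphisms + coequalizers of $\fgf$-split pairs exist and are preserved'', rather than strict creation --- which is exactly what your conservativity step supplies, so the conclusion stands.
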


\begin{proof}
We use again the sketches $\sketch, \esketch$ and the morphism $\iota : \sketch \emb \esketch$ introduced in the proof of Proposition~\ref{prop:free_exp}.
This morphism has the following properties:
\begin{itemize}
\item The base of any distinguished cone of $\esketch$ factors though $\iota$.
\item Every object of $\esketch$ not reached by $\sketch$ (namely only $\sing$) is the tip of a distinguished cone.
\end{itemize}
It thus fulfills the hypothesis of \cite[Corollary 1]{LairMonad} and it follows that $\fgf$ is monadic.
\end{proof}

\subsection{Cosimplicial object of orientals}

\begin{paragraph}
We write $\scat$ for the \ndef{simplex category}:
\begin{itemize}
\item Its objects are the ordered sets $\deltan{n} = \set{0 < 1 < \cdots < n}$ for $n \ge 0$.
\item Its morphisms are the order-preserving maps.
\end{itemize}
The morphisms of $\scat$ are generated by
\[
\faced i n : \deltan{n-1} \to \deltan{n}, \mbox{ for } n > 0 \mbox{ and } 0 \le i \le n, \qquad
\degend i n : \deltan{n+1} \to \deltan{n}, \mbox{ for } n \geq 0 \mbox{ and } 0 \le i \le n,
\]
where $\faced i n$ is the unique order-preserving injection such that the preimage of $\set i$ is empty,
and $\degend i n$ is the unique order-preserving surjection such that the preimage of $\set i$ has two elements.
\end{paragraph}

\begin{paragraph}
Similarly, we write $\scata$ for the \ndef{augmented simplex category}:
\begin{itemize}
\item Its objects are those of $\scat$, plus an additional one: $\deltan{-1} = \emptyset$.
\item Its morphisms are again the order-preserving maps.
\end{itemize}
By definition, $\scat$ is a full subcategory of $\scata$.
Moreover, the morphisms of $\scata$ are generated by the generating
morphisms of $\scat$, plus an additional one: $\faced{0}{0} : \deltan{-1} \to
\deltan{0}$.
\end{paragraph}

\begin{paragraph}
Recall that $\scata$ is the universal monoidal category endowed with a monoid object.
More precisely, the disjoint union $\deltan{m} \amalg \deltan{n} = \deltan{m + 1 + n}$ defines a strict monoidal structure on $\scata$,
with unit $\emptyset = \deltan{-1}$, and $\deltan{0}$ is endowed with a unique structure of monoid for this monoidal structure:
\[
\degend 0 0 : \deltan{0} \amalg \deltan{0} =  \deltan{1} \to \deltan{0}, \qquad
\faced 0 0 : \emptyset = \deltan{-1} \to \deltan{0}.
\]
The universal property of $\scata$ can then be expressed as follows (see for
instance \cite[Chapter~VII, Section~5]{MacLane}):
\end{paragraph}

\begin{lemma}
For any monoid object $M$ in a strict monoidal category $\mathcal C$,
there exists a unique strict monoidal functor $\Phi : \scata \to \mathcal C$
sending the monoid $\deltan{0}$ to the monoid $M$.
\end{lemma}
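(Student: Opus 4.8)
The plan is to exhibit $\scata$ as the \emph{free strict monoidal category on a monoid object}, from which the lemma follows essentially formally. First I would record a presentation of $\scata$ as a strict monoidal category: it is generated, under the monoidal product $\amalg$ and composition, by the single object $\deltan 0$ together with the two morphisms $\degend 0 0 : \deltan 0 \amalg \deltan 0 \to \deltan 0$ and $\faced 0 0 : \deltan{-1} \to \deltan 0$, subject only to the associativity and unit relations expressing that $(\deltan 0, \degend 0 0, \faced 0 0)$ is a monoid. On objects this is clear: every object of $\scata$ is either the monoidal unit $\deltan{-1}$ or of the form $\deltan n = \deltan 0 \amalg \cdots \amalg \deltan 0$ with $n+1$ factors, for $n \ge 0$. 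On morphisms, one observes that the standard generators of $\scata$ decompose monoidally as
\[
\faced i n = \id{\deltan{i-1}} \amalg \faced 0 0 \amalg \id{\deltan{n-1-i}}, \qquad
\degend i n = \id{\deltan{i-1}} \amalg \degend 0 0 \amalg \id{\deltan{n-1-i}},
\]
so that, by the description of the generating morphisms of $\scata$ recalled above, every morphism of $\scata$ lies in the strict monoidal subcategory generated by $\degend 0 0$ and $\faced 0 0$.

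Granting this presentation, the \emph{uniqueness} of $\Phi$ is immediate and uses only the generation statement: strict monoidality forces $\Phi(\deltan{-1}) = I$, the unit object of $\mathcal C$, and $\Phi(\deltan n) = M^{\otimes(n+1)}$ for $n \ge 0$; and since $\Phi$ must carry the monoid $\deltan 0$ to the monoid $M$, it must send $\degend 0 0$ to the multiplication $M \otimes M \to M$ and $\faced 0 0$ to the unit $I \to M$. As these data generate $\scata$ as a strict monoidal category, $\Phi$ is determined on all morphisms. For \emph{existence}, I would define $\Phi$ by exactly these formulas and check well-definedness: by the presentation, the only relations that need to be verified are the images of the defining relations of $\scata$, and each of these maps either to an associativity or unit law for $M$ — which holds by hypothesis — or to a bifunctoriality law of a strict monoidal category — which holds automatically in $\mathcal C$. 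Functoriality and strict monoidality of $\Phi$ are then formal.

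The main obstacle is the presentation of $\scata$ itself, and more precisely the \emph{completeness} of the relations: that every relation among composites of $\amalg$ and $\degend 0 0, \faced 0 0$ is a consequence of the monoid axioms together with the strict monoidal category axioms. The generation of morphisms is the routine part (any order-preserving map of finite ordinals factors through its image as a surjection after an injection, and each order-preserving surjection, resp.\ injection, is a composite of codegeneracies, resp.\ cofaces, hence of monoidal products of $\degend 0 0$, resp.\ $\faced 0 0$, with identities). Completeness requires a normal-form argument: bringing every morphism, via the monoid relations and interchange, to a canonical form, and checking that two parallel morphisms inducing the same order-preserving map have the same normal form. Rather than reproving this standard fact, I would invoke \cite[Chapter~VII, Section~5]{MacLane}, where it is shown precisely that $\scata$ is the free strict monoidal category on a monoid, and deduce the lemma as above.
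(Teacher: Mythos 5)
Your proposal is correct and matches the paper's treatment: the paper proves nothing here but simply invokes the standard fact that $\scata$ is the free strict monoidal category on a monoid object, citing the same reference \cite[Chapter~VII, Section~5]{MacLane} that you fall back on for the completeness of the presentation. Your additional sketch of the generation step (the monoidal decompositions of $\faced{i}{n}$ and $\degend{i}{n}$, hence uniqueness of $\Phi$) is accurate and harmless, so the two arguments are essentially identical.
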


\begin{paragraph}
In particular, a monad on a category $\mathcal C$ amounts to a monoid object
in the strict monoidal category $\End(\mathcal C)$ of endofunctors on $\mathcal C$.
Hence, for any such monad $T$, we get a canonical functor $c^{}_T : \scata \to \End(\mathcal C)$,
which is given as follows on objects and on generators:
\[
c^{}_T(\deltan{n}) = T^{n+1}, \qquad
c^{}_T(\faced i n) = T^{n-i} \munit T^i : T^n \to T^{n+1}, \qquad
c^{}_T(\degend i n) = T^{n-i} \mult T^i : T^{n+2} \to T^{n+1}.
\]
\end{paragraph}

\begin{definition}[orientals] \label{def:orientals} \ %

\noindent
The \ndef{augmented cosimplicial object of orientals} $\orienta : \scata \to \ocat$ is the composition
\[
\scata \xrightarrow{c^{}_\exmon} \End(\ocat) \xrightarrow{\ev{\emptyset}} \ocat
\]
where $c^{}_\exmon$ is given in the previous paragraph and $\ev \emptyset$ is the evaluation functor at $\emptyset$.

By restricting $\orienta$ to $\scat$, we get the \ndef{cosimplicial object of orientals} $\orient : \scat \to \ocat$,
and for $n \ge 0$, the $n$-\ndef{th oriental} is $\norient n =
\orient(\deltan{n})$.
\end{definition}

\begin{paragraph} \label{paragr:free-monad}
Explicitly, we have:
\[
\norient{n} = \exmon^{n+1}(\emptyset), \qquad
\orient(\faced{i}{n}) = \exmon^{n-i} \munit \exmon^{i}(\emptyset) : \norient{n-1} \to \norient{n}, \quad
\orient(\degend{i}{n}) = \exmon^{n-i} \mult \exmon^i(\emptyset) : \norient{n+1} \to \norient{n} .
\]
\end{paragraph}

In the remainder of the paper, we will describe explicitly this ``abstract'' cosimplicial object of orientals
and show that it corresponds to the classical one defined by Street in \cite{StreetOrient}.

\subsection{Free expansion on a polygraph}
\label{subsec:free_exp_pol}

We know from Proposition~\ref{prop:free_exp} that the forgetful functor
$\fgf:\expcat\to\ocat$ admits a left adjoint $\frf:\ocat\to\expcat$
taking an \oo-category $C$ to the \ndef{free expansion} $\frf
C=(\exmon C,\orig,\econe)$ on~$C$. We now present 
a concrete description of $\frf C$ in the
particular case where the \oo-category  $C$ is freely generated by a polygraph.  

\begin{paragr}\label{paragr:construct_free_exp}
 Let  $S$ be a polygraph and $C=\free S$ the free \oo-category
generated by $S$.  We shall define an \oo-category $\expan C$, freely
generated by a polygraph, together with
\begin{itemize}
\item an inclusion morphism $\inc:C\to \expan C$,
\item a distinguished $0$-cell $\orig$ of $\expan C$,
\item a morphism $\econe:\expan C\to \cone\expanorig C$,
\end{itemize}
in such a way that, eventually, $(\expan C,\orig,\econe)$
becomes an \oo-category with expansion, and in fact coincides with
$\frf C$.

We now define $\expan C$, $\inc$ and $\econe$ by simultaneous
induction on the dimension. In each dimension~$n$, $\expan C$ will be
defined by freely adjoining a set $R_n$ of new $n$-generators to those
of $C$, together with source and target maps from $R_n$ to $\expan
C_{n-1}$. The morphism $\inc$ will be induced by the natural
inclusion of generators $S_n\to S_n\amalg R_n$. Throughout the construction,
we abuse notations by identifying any $n$-cell $x\in C_n$ with
$\inc x\in\expan C_n$.

\begin{itemize}
\item For $n=0$, $R_0$ is a singleton and $\inc:C_0 \to \expan C_0$ is the
  natural inclusion $S_0 \to S_0 \amalg R_0$.
  The unique element of $R_0$ is denoted
  by $\orig$ and eventually becomes the distinguished $0$-cell of~$\expan C$.
\item For $n=1$, to each $a\in S_0$ corresponds a generator $r_a\in
  R_1$ such that $r_a:\orig\to a$. Now to any generator $a:u\to v$
  in $S_1$ correspond source and target cells $u$ and $v$ in $C_0 \subset
  \expan C_0$.
  Therefore the $1$-cells in $\expan C_1$ are defined as freely generated by
  $S_1\amalg R_1$, and $\inc:C_1\to  \expan C_1$ is induced by the natural
  inclusion $S_1\to S_1\amalg R_1$. 

Moreover the $0$-cells of $\cone\expanorig C$ are now defined as the
$0$-cones of origin $\orig$ in $\expan C$,~that~is,
the $1$-cells $u$ of $\expan C$ with $\sce u_0=\orig$. Finally
  $\econe:\expan C_0\to \cone\expanorig C_0$ is defined by 
  \begin{itemize}
  \item $\econe(a)=r_a$ for each $a\in S_0$,
  \item $\econe(\orig)=\unit{\orig}$.
  \end{itemize}
  In particular, the first degeneracy condition holds.
\item Let $n\geq 1$ and suppose we have defined $\expan C$  together with a morphism $\inc:C\to \expan C$, up to dimension $n$, as
  well as an expanding homotopy $\econe:\expan C\to \cone\expanorig C$ up to
  dimension~$n{-}1$. So we get the following diagram:
  \begin{displaymath}
    \begin{xy}
      \xymatrix{C_{n-1}\ar[d]_{\inc}& \Doubl C_n\ar[d]_{\inc} \\
                       \expan C_{n-1}\ar[d]_{\econe} & \Doubl \expan C_n \\
                        \cone\expanorig C_{n-1}
                        \ar[ur]_{\ppal{-}}&    }
    \end{xy}
  \end{displaymath}
In addition, we suppose that $\expan C_n$ is freely generated by the
set $S_n\amalg R_n$ where
\[
  R_n=\setbis{\econe_a}{a\in S_{n-1}}.
  \]

We must now extend $\expan C$  together with $\inc:C\to\expan C$ up to
dimension $n+1$, and the expanding homotopy $\econe$ up to
dimension $n$. The $(n{+}1)$-generators of
$\expan C$ are twofold:
\begin{itemize}
\item each $a:u\to v$ in $S_{n+1}$ becomes an $(n{+}1)$-generator
 of $\expan C$ with source and target $u$ and $v$ in
 $C_n \subset \expan C_n$;
\item to each $a:u\to v$ in $S_n$ corresponds a new generator $r_a$ in
  $R_{n+1}$. By induction hypothesis, $x=\econe(u)$ and
  $y=\econe(v)$ are two parallel $(n{-}1)$-cones of origin $\orig$ in
  $\expan C$ such that $\Bas{x}=u$ and $\Bas{y}=v$.
Therefore, by~\ref{paragr:cone_formulas}, we may define
  the source and target of~$r_a$ by
  \[
    \begin{aligned}
      \sce{r_a}&=\econe_{\tge a_{n-1}}=\econe_v,
      \\
      \tge{r_a}&=a\comp 0 \econe_{\sce a_0}\comp 1 \cdots
      \comp{n-2} \econe_{\sce a_{n-2}} \comp{n-1} \econe_{\sce a_{n-1}} =
      a \comp 0 \econe_{\sce u_0} \comp 1 \cdots \comp{n-2} \econe_{\sce u_{n-2}} \comp{n-1} \econe_u.
    \end{aligned}
  \]
\end{itemize}  
Thus $\expan C_{n+1}$ is defined as the set of freely generated
$(n{+}1)$-cells over $S_{n+1}\amalg R_{n+1}$, and the natural inclusion
$S_{n+1}\to S_{n+1}\amalg R_{n+1}$ induces $\inc:C_{n+1}\to \expan
C_{n+1}$.
Now, having defined~$\expan C$ up to dimension $n+1$, the $n$-cones in
$\cone\expanorig C$ are determined and the remaining task is to define
$\econe:\expan C_n\to \cone\expanorig C_n$. By
Lemma~\ref{lemma:universal}, it is sufficient to define $\econe$
on the generators of $\expan C_n$, that is, on the elements of
$S_n\amalg R_n$, provided the commutation conditions for source and target
are satisfied. There are two cases to consider:
\begin{itemize}
\item if $a:u\to v$ is in $S_n$, we have defined $r_a\in R_{n+1}$ in
  such a way that $r_a$ is the principal cell of an $n$-cone $z:x\to
  y$ where $x=\econe(u)$, $y=\econe(v)$ and
  $\Bas{z}=a$. Therefore $\EPS z=\econe(\EPS a)$ for $\epsilon=\pm$ so that we may define $\econe(a)=z$;   
\item if $a\in R_n$, by induction hypothesis, $a$ is of the form
  $r_b=\econe_b$ for some $b\in S_{n-1}$. Therefore $\sce a_0=\orig$
  and by~\ref{lemma:unique_deg_cone}, we may define $\econe(a)$ as the
  unique degenerate cone of base $a$. If $n=1$, $\econe(\SCE
  a)=\econe(\orig)=\unit\orig=\SCE\econe(a)$ and $\econe(\TGE
  a)=\econe(b)=\TGE\econe(a)$, which entails compatibility with source
  and target.
  If $n\geq 2$, then
 \[
 \begin{aligned}
   \SCE{a} & =\sce{r_b}=\econe_{\tge b_{n-2}},
   \\
   \TGE{a} & =\tge{r_b}=b\comp 0 \econe_{\sce b_0}\comp 1 \cdots
    \comp{n-2}\econe_{\sce b_{n-2}}.
 \end{aligned}
\]
 By induction, applying $\econe$ to the above equations, and
 using the degeneracy conditions up to dimension $n$ gives
 \[
   \begin{aligned}
   \econe(\SCE a)& =\SCE\econe(a),\\
   \econe(\TGE a)&=\econe(b)\comp 0 \econe(\econe_{\sce b_0})\comp 1 \cdots
    \comp{n-2}\econe(\econe_{\sce b_{n-2}}),
   \end{aligned}
\]
 and it remains to show that $\TGE\econe(a)=\econe(\TGE a)$. First,
 both cones have the same base~$\TGE a$, and by induction, for
 $\epsilon=\pm$,
 \[
 \EPS\econe(\TGE a)=\econe(\EPS\TGE a)=\econe(\EPS\SCE
   a)=\EPS\econe(\SCE a)=\EPS\SCE\econe(a)=\EPS\TGE\econe(a),
 \]
 so that both cones have same source and target. Finally, the
 principal cell of $\TGE\econe(a)$ is just $a$ by
 Lemma~\ref{lemma:unique_deg_cone}, whereas the principal cell of
 $\econe(\TGE a)$ is the one of $\econe(b)\comp 0 \econe(\econe_{\sce b_0})\comp 1 \cdots
    \comp{n-2}\econe(\econe_{\sce b_{n-2}})$ by the above formula. As
    all terms $\ppal{\econe(\econe_{\sce b_i})}$ are identities, one
    gets $\ppal{\econe(\TGE a)}=\ppal{\econe(b)}=a$ and we are done.
\end{itemize}
By construction, $R_{n+1}$ consists in generators of the form
$\econe_a$ where $a\in S_n$, and for all $a\in S_{n+1}\amalg R_{n+1}$,
$\Bas\econe a=a$. Now, for each $u\in \expan C_{n-1} $,
$\econe_{\econe u}=\unit{\econe_u}$: this holds by construction for
generators and extends to any $(n{-}1)$-cell by the functoriality
conditions of~\ref{paragr:exp}.
Therefore $\econe$ satisfies the expanding homotopy
conditions up to dimension $n$, which completes the induction step.
\end{itemize}
\end{paragr} 

\begin{paragr}\label{paragr:def_expan_S}
To sum up, given $C$ a free \oo-category on a polygraph $S$, we have
defined an \oo-category $\expan C$, endowed with a distinguished
$0$-cell $\orig:1\to \expan C$ and an expanding homotopy
$\econe:\expan C\to \cone\expanorig C$. By construction, $\expan
C$ is itself free on a polygraph whose set of $n$-generators is $S_n\amalg R_n$,
where
\[
  R_0 = \{\orig\} \qquad\text{and}\qquad R_n = \{ r_a \mid a
  \in S_{n-1}\} \quad\text{for $n \ge 1$}.
\]
The sources and the targets of the generators in $S_n$ are inherited by
those of $S$ and, if $a$ is in~$S_{n-1}$, for $n \ge 1$, the source and the
target of $r_a$ are
\[
  \sce{r_a}=\econe_{\tge a_{n-1}}
  \qquad\text{and}\qquad
  \tge{r_a}=a\comp 0 \econe_{\sce a_0}\comp 1 \cdots
  \comp{n-1}\econe_{\sce a_{n-1}}
\]
and the expanding homotopy is defined by
\[ \econe_a = r_a, \qquad \econe_\orig = \unit{\orig} \qquad\text{and}\qquad \econe_{r_a} = \unit{r_a}. \]
We will denote this polygraph by $\expan S$, so that $\expan C=\fexpan S$.

\end{paragr}

\begin{proposition}\label{prop:free_exp_pol}
  For any polygraph $S$, the left adjoint $\frf:\ocat\to\expcat$ to
  the functor $\fgf:\expcat\to\ocat$ takes $C = \free S$ to $(\expan C,
  \orig, \econe)$, and therefore the expansion monad $\exmon$ takes
$C=\free S$ to $\expan C = \fexpan S$.
\end{proposition}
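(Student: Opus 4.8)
The plan is to check that $(\expan C,\orig,\econe)$, together with the inclusion $\inc:C\to\expan C$, has the universal property characterizing the value at $C=\free S$ of the left adjoint $\frf$ (whose existence is Proposition~\ref{prop:free_exp}). The construction of~\ref{paragr:construct_free_exp}, as recapitulated in~\ref{paragr:def_expan_S}, already carries out the verification that $(\expan C,\orig,\econe)$ is an object of $\expcat$ and that $\expan C$ is the free \oo-category $\fexpan S$ on the polygraph $\expan S$; so what remains is to show that, for every object $(D,\orig_D,\econe^D)$ of $\expcat$ and every \oo-functor $f:C\to D$, there is a \emph{unique} morphism $g:(\expan C,\orig,\econe)\to(D,\orig_D,\econe^D)$ in $\expcat$ with $g\inc=f$. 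Granting this, $(\expan C,\orig,\econe)$ with unit $\inc$ is $\frf C$, which is the first assertion, and the second is then immediate, since $\exmon=\fgf\frf$ gives $\exmon(\free S)=\fgf(\expan C,\orig,\econe)=\expan C=\fexpan S$.

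For uniqueness, I would use that $\expan C=\fexpan S$ is free on the polygraph $\expan S$, so that $g$ is determined by its values on the generators, that is, on the elements of $S_n$ and of $R_n$. The condition $g\inc=f$ forces $g(a)=f(a)$ for $a$ a generator of $S$; preservation of the origin forces $g(\orig)=\orig_D$; and preservation of the expanding homotopy forces $g(\econe_a)=\econe^D_{g(a)}$ for every cell $a$, hence, specializing to $a\in S_{n-1}$ and using $\econe_a=r_a$, it forces $g(r_a)=\econe^D_{f(a)}$. Thus $g$ is uniquely determined on all generators.

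For existence, I would take the assignments $g(a)=f(a)$ for $a$ in $S$, $g(\orig)=\orig_D$ and $g(r_a)=\econe^D_{f(a)}$ for $a\in S_{n-1}$ as a \emph{definition} of $g$ on generators, and prove, by a single induction on the dimension $n$, the conjunction of: (i) these assignments are compatible with sources and targets up to dimension $n$, so that by the universal property of $\expan C=\fexpan S$ (via iterated use of Lemma~\ref{lemma:universal}) they produce an \oo-functor up to dimension $n$; and (ii) this \oo-functor satisfies $g(\econe_x)=\econe^D_{g(x)}$ for every cell $x$ of lower dimension. Point~(i) is clear for the generators coming from $S$, as $f$ is an \oo-functor and the sources and targets of these generators are inherited from $S$; for a generator $r_a$ with $a\in S_{n-1}$ it amounts, after substituting the formulas of~\ref{paragr:def_expan_S} for the source and target of $r_a$ on one side and the formulas of~\ref{paragr:exp} for those of $\econe^D_{f(a)}$ on the other, to an identity that follows from~(ii) in lower dimensions and the \oo-functoriality of $f$. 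For point~(ii), I would first check $g(\econe_x)=\econe^D_{g(x)}$ on generators: it holds by definition for $x\in S_{n-1}$, reduces to the degeneracy condition $\econe^D_\orig=\unit\orig$ for $x=\orig$, and for $x=r_b\in R_n$ reduces, via $\econe_{r_b}=\unit{r_b}$, to the degeneracy condition $\econe^D_{\econe^D_{f(b)}}=\unit{\econe^D_{f(b)}}$ in $D$; then I would extend from generators to all cells of lower dimension by noting that, by the functoriality conditions of~\ref{paragr:exp} (for $\econe$ in $\expan C$ and for $\econe^D$ in $D$) and the \oo-functoriality of $g$, both $x\mapsto g(\econe_x)$ and $x\mapsto\econe^D_{g(x)}$ define oplax transformations from the constant \oo-functor $\orig_D$ to $g$, hence coincide once they agree on generators. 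Once~(i) and~(ii) hold in every dimension, $g$ is an \oo-functor with $g(\orig)=\orig_D$ and $g\circ\econe=\econe^D\circ g$, i.e. a morphism in $\expcat$ restricting to $f$ along $\inc$.

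The main obstacle is the bookkeeping in the existence part: statements~(i) and~(ii) are genuinely interdependent, so they must be run through one simultaneous induction on dimension rather than proved separately --- checking the compatibility of $g(r_a)=\econe^D_{f(a)}$ with sources and targets at dimension $n$ uses precisely that $g$ already intertwines $\econe$ and $\econe^D$ at dimension $n-1$. Everything else is a mechanical unwinding of the defining formulas of~\ref{paragr:exp} and~\ref{paragr:def_expan_S} together with the universal property of free \oo-categories on polygraphs.
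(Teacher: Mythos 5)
Your proposal is correct and matches the paper's own argument in all essentials: the paper likewise verifies the universal property by induction on dimension, with $g$ forced on generators by $g\inc=f$, origin preservation, and $g(r_a)=\econe_{f(a)}$, checks source/target compatibility for the $r_a$ using the intertwining of expansions in lower dimensions, and verifies that intertwining on generators (via the degeneracy conditions) before extending it to all cells by functoriality. The interdependence you flag is handled in the paper by exactly the staggered induction you describe (morphism defined up to dimension $n$, intertwining up to dimension $n-1$).
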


\begin{proof}
Let $(D,\orig,\econe)$ be
  an \oo-category with expansion, and $f:\free S\to D$ a morphism in $\ocat$. 
We have to show that there is a unique morphism
\[
\free f:(\expan C,\orig,\econe)\to (D,\orig,\econe)
\]
in $\expcat$ such that the following diagram commutes:
\[
\xymatrix{
\free S \ar[d]_{\inc}\ar[rd]^f & \\
\expan C \ar[r]_{\free f} & D
}
\]
Note that, by abuse of notation, we identify here $\free f$ with $\fgf\free
f$, as $\fgf\free f$ entirely determines $\free f$, as soon as it
commutes with the origins and the expanding homotopies.
The construction is by induction on the dimension.
\begin{itemize}
\item In dimension $n=0$, $\expan S_0=S_0\amalg \set{\orig}$, and we define
  $\free f a= fa$ if $a\in S_0$ and $\free f a=\orig$ if
  $a=\orig$. This is clearly the only possible choice.
\item In dimension $n=1$, $\expan S_1=S_1\amalg R_1$. If $a\in S_1$,
  we set $\free fa=fa$, the commutation with source and target being
  straightforward.   If $a\in R_1$, $a=r_b=\econe_b$ for some $b\in
  S_0$, and we define $\free fa=\econe_{fb}$, which again ensures
  commutation with source and target, and defines $\free f$ up to
  dimension $1$, in the unique possible way. Moreover, for each $u\in\expan
  C_0$, $\econe(\free f u)=\free f(\econe u)$. In fact, either
  $u=\orig$, in which case $\econe_{\free
    f\orig}=\econe_{\orig}=\unit{\orig}=\free f(\unit{\orig})=\free
  f(\econe_\orig)$, or $u\in S_0$, in which case $\econe_{\free f
    u}=\free f(r_u)=\free f(\econe_u)$ by definition.
\item Let $n\geq 1$ and suppose that, up to
  dimension $n$, we have defined a morphism $\free f:\expan C\to D$ such that
$\free f\circ \inc=f$ and $\free f$ commutes to  the
expanding homotopies up to dimension~$n{-}1$.  We extend $\free f$ to dimension $n+1$
by defining first $\free f a$ when $a\in \expan S_{n+1}$. As $\expan S_{n+1}=S_{n+1}\amalg R_{n+1}$, there are two cases
to consider:
\begin{itemize}
\item if $a\in S_{n+1}$, we take $\free f a=fa$, and the commutation
  with source and target is straightforward. The condition $\free
  f\circ \inc=f$ implies that this choice is unique;
\item if $a\in R_{n+1}$, $a$ is of the form $r_b$ with $b\in S_n$. By
  induction hypothesis, we already have an $n$-cell $\free f b\in
  D_n$. Now $D$ is endowed with an expanding homotopy $\econe$ so
  that we get an $n$-cone $x=\econe(\free f b)$, whose principal cell
  $u=\ppal x$ is an $(n{+}1)$-cell of~$D$. Thus, we may define $\free f
  a=u$. The construction of $r_b$ given
  in~\ref{paragr:construct_free_exp} ensures the commutation with
  source and target. Moreover, because $\free f$ must commute to the
  expanding homotopies, the above choice for $\free f r_b$ is unique.
\end{itemize}
By Lemma~\ref{lemma:universal}, the above values determine a unique
extension of the morphism $\free f$ in dimension $n+1$.
It remains to check that the morphism $\free f$ so defined actually commutes to expanding
homotopies up to dimension $n$, that is, $\econe(\free f u)=\free
f\econe(u)$ for all $u\in\expan C_n$. By functoriality, it suffices
to check this commutation on generators. Thus, if $a\in S_n$,
$\econe_a=r_a\in R_{n+1}$ and $\free f\econe_a=\econe_{\free f a}$ by
definition. If $a\in R_n$, $a=r_b=\econe_b$ for some $b\in S_{n-1}$,
and the degeneracy conditions together with the induction hypothesis
yield
\[
  \econe_{\free f
  a}=\econe_{\free f\econe_b}=\econe_{\econe_{\free f b}}=\unit{\econe_{\free f b}}=\unit{\free
  f\econe_b}=\free f\unit{\econe_b}=\free f\econe_{\econe_b}=\free
f\econe_a.
\qedhere
  \]
\end{itemize}
\end{proof}

\begin{remark}\label{rem:exist_adj}
  Proposition~\ref{prop:free_exp}, whose proof is based on  an abstract argument using
  the theory of sketches, states that the forgetful functor
  $\fgf:\expcat\to\ocat$ admits a left adjoint. Proposition~\ref{prop:free_exp_pol} gives an alternate
  proof. Indeed, it shows that this forgetful functor admits a left adjoint
  relative to the subcategory of \oo-categories freely generated by a
  polygraph. As this subcategory contains a small dense subcategory (for
  instance, the category of globular pasting schemes indexing operations of
  \oo-categories), this implies that the forgetful functor admits a left
  adjoint (provided that we know that the category $\expcat$ is cocomplete).
\end{remark}

\begin{corollary}\label{cor:free_orientals}
  For each $n\geq 0$, the $n$-th oriental $\norient n$ is a free
  \oo-category on a polygraph.
\end{corollary}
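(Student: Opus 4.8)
The plan is to argue by a short induction, leaning entirely on Proposition~\ref{prop:free_exp_pol}, which computes the expansion monad on free \oo-categories, together with the definition of the orientals as iterates of $\exmon$. Recall from Definition~\ref{def:orientals} (spelled out in~\ref{paragr:free-monad}) that $\norient n = \exmon^{n+1}(\emptyset)$; hence it suffices to prove that $\exmon^m(\emptyset)$ is freely generated by a polygraph for every $m \ge 0$, and then to specialize to $m = n+1$.

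First I would dispatch the base case $m = 0$: here $\exmon^0(\emptyset) = \emptyset$, which is $\free{\emptyset}$, the free \oo-category on the empty polygraph (the polygraph with no generators in any dimension). Then, for the inductive step, assuming $\exmon^m(\emptyset) = \free S$ for some polygraph $S$, I would invoke Proposition~\ref{prop:free_exp_pol} to obtain $\exmon^{m+1}(\emptyset) = \exmon(\free S) = \fexpan S$, which is the free \oo-category on the polygraph $\expan S$ constructed in~\ref{paragr:def_expan_S}. This closes the induction.

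There is no genuine obstacle here: the corollary is essentially a restatement of Proposition~\ref{prop:free_exp_pol} propagated along the recursion defining the orientals. If desired, one may unwind this recursion to exhibit an explicit polygraph generating $\norient n$, namely the one obtained by applying the operation $S \mapsto \expan S$ of~\ref{paragr:def_expan_S} to the empty polygraph $n+1$ times; this explicit polygraph is precisely the object of study of the next section.
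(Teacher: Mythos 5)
Your proof is correct and follows essentially the same route as the paper: an induction whose inductive step is exactly Proposition~\ref{prop:free_exp_pol}, the only (harmless) difference being that you anchor the induction at $\exmon^0(\emptyset)=\emptyset$, free on the empty polygraph, whereas the paper starts at $\norient 0$, the terminal \oo-category, free on the polygraph with a single $0$-generator.
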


\begin{proof}
  By induction on $n$. As $\norient 0$ is the terminal
  \oo-category, it is freely generated by the polygraph having a single
  $0$-generator and no generator of higher dimensions. Let $n\geq 0$
  and suppose $\norient n$ is free on a polygraph. By definition,
  $\norient{n+1}=\exmon(\norient n)=\expan{\norient{n}}$, which is again free on a
  polygraph by~Proposition~\ref{prop:free_exp_pol}.
\end{proof}

\begin{proposition}\label{prop:monad_action_functor}
  Let $C=\free S$, where $S$ is a polygraph, and let $f:C\to D$ be an
  \oo-functor. Then the action of the \oo-functor $\exmon f : \exmon C \to \exmon D$ on generators is given by
  \[ (\exmon f)(a) = \inc f(a)\quad \text{for $a$ in $S_n$ with $n \ge 0$}, \]
  \[ (\exmon f)(\orig) = \orig
    \qquad\text{and}\qquad
    (\exmon f)(r_a) = \econe_{\inc f(a)}\quad
    \text{for $a \in S_{n-1}$ with $n \ge 1$,}
  \]
  where $\orig$ and $\econe$ respectively denote the origin and the
  expanding homotopy in $\exmon C$ and $\exmon D$. 
\end{proposition}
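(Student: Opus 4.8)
The plan is to derive the formulas from Proposition~\ref{prop:free_exp_pol} together with the naturality of the unit $\munit : \id{\ocat} \to \exmon$ of the expansion monad. Recall that $\exmon f = \fgf \frf f$, where $\frf f : \frf C \to \frf D$ is the image of $f$ under the left adjoint $\frf$, and that, by Proposition~\ref{prop:free_exp_pol}, $\frf C = (\expan C, \orig, \econe)$ has underlying \oo-category $\exmon C = \expan C = \fexpan S$ freely generated by the polygraph $\expan S$ of paragraph~\ref{paragr:def_expan_S}, with the unit $\munit_C : C \to \exmon C$ being the inclusion $\inc$. Two facts then do all the work. First, naturality of $\munit$ gives $\exmon f \circ \munit_C = \munit_D \circ f$. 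Second, since $\exmon f = \fgf \frf f$ and $\frf f$ is a morphism of $\expcat$, the commuting square in the definition of $\expcat$ reads $\cone(\exmon f) \circ \econe = \econe \circ \exmon f$ as \oo-functors from $\exmon C$ to $\Cone{\exmon D}$ (here $\econe$ denotes the respective expanding homotopies of $\exmon C$ and $\exmon D$); moreover $\exmon f$ preserves the origins. Passing to principal cells, the first of these equalities says $(\exmon f)(\econe_x) = \econe_{(\exmon f)(x)}$ for every cell $x$ of $\exmon C$.

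With these in hand, the three cases are immediate. For a generator $a \in S_n$, the standing abuse of notation identifies $a$ with $\inc a = \munit_C(a)$ in $\exmon C$, so naturality yields $(\exmon f)(a) = (\exmon f)(\munit_C(a)) = \munit_D(f(a)) = \inc f(a)$. For the generator $\orig \in R_0$, the \oo-functor $\exmon f$ preserves origins, whence $(\exmon f)(\orig) = \orig$. Finally, for a generator $r_a \in R_n$ with $a \in S_{n-1}$ and $n \ge 1$, the description of $\expan S$ in paragraph~\ref{paragr:def_expan_S} gives $r_a = \econe_a$ (with $a$ identified with $\inc a \in \expan C_{n-1}$), so the second fact followed by the first case gives $(\exmon f)(r_a) = (\exmon f)(\econe_a) = \econe_{(\exmon f)(a)} = \econe_{\inc f(a)}$. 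Since the sets $S_n$ and $R_n$ together exhaust the generators of $\expan S$, this determines $\exmon f$ completely, which is what we wanted.

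I do not expect a genuine obstacle: the argument is essentially a bookkeeping of identifications. The two points deserving a modicum of care are the identification $\munit_C = \inc$, which is part of the content of Proposition~\ref{prop:free_exp_pol}, and the passage from ``$\frf f$ commutes with the expanding homotopies'' to the pointwise equality $(\exmon f)(\econe_x) = \econe_{(\exmon f)(x)}$ of principal cells. The latter is immediate once one recalls that $\cone(\exmon f)$ is induced from $\cyl(\exmon f)$ through the pullback of Definition~\ref{def:cone}, and that $\cyl(\exmon f)$ acts on a cylinder by applying $\exmon f$ to each of its cells, so that in particular it sends the principal cell to the principal cell.
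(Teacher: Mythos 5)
Your proof is correct and takes essentially the same route as the paper: the paper likewise combines the naturality of $\inc$ (the unit, identified via Proposition~\ref{prop:free_exp_pol}) with the fact that $\exmon f$ underlies a morphism of $\expcat$ — hence preserves the origin and commutes with the expanding homotopies — and then reads off the three formulas from the description of $\expan S$ (in particular $r_a = \econe_a$). The only cosmetic difference is that the paper phrases the second ingredient through the uniqueness clause of Proposition~\ref{prop:free_exp_pol}, whereas you extract it directly from $\exmon f = \fgf\frf f$ and pass to principal cells, which is the same content.
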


\begin{proof}
  Let $g=\inc f:\free S\to \exmon D$. By naturality of $\inc$, the following diagram commutes:
  \[
    \xymatrix{\free S\ar[r]^f \ar[d]_{\inc}\ar[rd]^g& D\ar[d]^{\inc}\\
    \exmon C\ar[r]_{\exmon f} & \exmon D}
\]
Therefore, by Proposition~\ref{prop:free_exp_pol}, $\exmon f$ is the
unique \oo-morphism commuting to origins and expanding homotopies
making the bottom-left triangle commute, and the equations follow from
the description of $\expan S$.
\end{proof}

By Proposition~\ref{prop:free_exp_pol}, if $C$ is freely generated by a
polygraph, then the unit of the expansion monad is given by the morphism
$\inc:C\to \expan C$ of paragraph~\ref{paragr:construct_free_exp}. We
end the section by a description of the multiplication of the monad:

\begin{proposition} \label{prop:mult}
  Let $C$ be an \oo-category freely generated by a polygraph. Then the
  multiplication of the expansion monad $\mu : \exmon^2 C \to \exmon C$ is the
  \oo-functor defined on generators by
  \[
    \mu(\eta x) = x,
    \qquad
    \mu(\orig) = \orig
    \qquad\text{and}\qquad
    \mu(\xi_{\eta x}) = \xi_x,
  \]
  where $x$ is a generator of $\exmon C = \expan C$.
\end{proposition}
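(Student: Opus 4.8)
The plan is to exploit the fact that $\exmon$ is the monad of the adjunction $\frf \dashv \fgf$ of Proposition~\ref{prop:free_exp}. Writing $\varepsilon : \frf\fgf \to \id{\expcat}$ for its counit, the monad is $(\fgf\frf, \fgf\varepsilon\frf, \munit)$, so $\mu_C = \fgf(\varepsilon_{\frf C})$ for every \oo-category $C$. Two properties of this morphism will do all the work: since $\mu_C$ is the underlying \oo-functor of a morphism $\frf(\fgf\frf C) \to \frf C$ of $\expcat$, it preserves the origin and commutes with the expanding homotopies; and, by the monad unit law, $\mu_C \circ \munit_{\exmon C} = \id{\exmon C}$.

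The first step is to describe $\exmon^2 C$ concretely when $C = \free S$. By Proposition~\ref{prop:free_exp_pol}, $\exmon C = \expan C$ is the free \oo-category on the polygraph $\expan S$ of paragraph~\ref{paragr:def_expan_S}. Applying the same proposition and~\ref{paragr:def_expan_S} again, now with $\exmon C$ and $\expan S$ in the roles of $C$ and $S$, exhibits $\exmon^2 C = \exmon(\exmon C)$ as a free \oo-category on a polygraph whose generators, according to the enumeration in~\ref{paragr:def_expan_S}, are: the cells $\eta x$ for $x$ a generator of $\exmon C$, where $\eta$ is the unit $\munit_{\exmon C} : \exmon C \to \exmon^2 C$, which by Proposition~\ref{prop:free_exp_pol} coincides with the generator-inclusion $\inc$; the origin $\orig$ of $\exmon^2 C$; and one further generator $\xi_{\eta x}$ for each generator $x$ of $\exmon C$, where $\orig$ and $\xi$ now denote the origin and the expanding homotopy of $\exmon^2 C = \frf(\exmon C)$. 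Since $\exmon^2 C$ is freely generated by this polygraph, $\mu_C$ is determined by its values on these three families.

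The second step computes these values. For $\eta x$, the monad unit law $\mu_C \circ \munit_{\exmon C} = \id{\exmon C}$ gives $\mu_C(\eta x) = x$ at once (and in fact for every cell $x$ of $\exmon C$, not only generators). For $\orig$ and $\xi_{\eta x}$, we use that $\mu_C$ underlies the morphism $\varepsilon_{\frf C}$ of $\expcat$: it preserves origins, so $\mu_C(\orig) = \orig$, and it commutes with the expanding homotopies, which on principal cells reads $\mu_C(\xi_z) = \xi_{\mu_C(z)}$ for every cell $z$ of $\exmon^2 C$; specializing to $z = \eta x$ and using the previous formula yields $\mu_C(\xi_{\eta x}) = \xi_{\mu_C(\eta x)} = \xi_x$. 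As a morphism out of a free \oo-category is determined by its action on generators, this establishes the three displayed formulas.

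I expect the only delicate point to be the bookkeeping in the first step: one runs the inductive construction of~\ref{paragr:construct_free_exp}--\ref{paragr:def_expan_S} twice and must keep careful track of the two layers of origins and of expanding homotopies, all written $\orig$, resp.\ $\xi$, by the standing abuse of notation, as well as of the coincidence of the monad unit $\munit$, the inclusion $\inc$, and the generator-inclusion $\exmon C \hookrightarrow \exmon^2 C$. Once the generators of $\exmon^2 C$ have been correctly identified, nothing beyond the monad axioms and the definition of a morphism of $\expcat$ is needed.
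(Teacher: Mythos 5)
Your proposal is correct and follows essentially the same route as the paper: the first formula comes from the monad unit law, and the other two from the fact that $\mu = \fgf\epsilon\frf$ underlies a morphism of $\expcat$, hence preserves the origin and commutes with the expanding homotopies. You merely spell out in more detail the identification of the generators of $\exmon^2 C$ via Proposition~\ref{prop:free_exp_pol}, which the paper's terse proof leaves implicit.
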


\begin{proof}
  The first equation holds for any monad. As for the other ones, they follow
  from the fact that $\mu = \fgf \epsilon \frf$ is induced by a morphism of
  \oo-categories with expansion.
\end{proof}

\section{Oriental calculus}
\label{sec:calculus}

\subsection{Syntax for expansion}

We consider an \oo-category with expansion $(C, \orig, \econe)$.

\begin{paragr} \label{paragr:chevron}
For any $n$-cell $x$ in $C$, we write $\ch x$ for the $(n{+}1)$-cell $\econe_x$ defined in paragraph~\ref{paragr:exp},
which we call \emph{chevron} of $x$.
\begin{itemize}
\item It has the following source and target:
\[
\ch x : \orig \to x \mbox{ if } n = 0, \qquad
\ch x : \ch{\tge x_{n-1}} \to x \comp 0 \ch{\sce x_0} \comp 1 \cdots \comp {n-1} \ch{\sce x_{n-1}} \mbox{ if } n > 0.
\]
\item It is the principal cell of the $n$-cone $\econe(x) : \orig \cto x$ given by the following cells:
\[
\ch{\sce x_0}, \ch{\tge x_0}, \ldots, \ch{\sce x_{n-1}}, \ch{\tge x_{n-1}}, \ch x.
\]
\end{itemize}
\end{paragr}

\begin{examples}
Starting from $\ch x : \orig \to x$ for $n = 0$, we get $\ch x : \ch{\tge x_0} \to x \comp 0 \ch{\sce x_0} : \orig \to \tge x_0$ for $n = 1$, 
and $\ch x : \ch{\tge x_1} \to x \comp 0 \ch{\sce x_0} \comp 1 \ch{\sce x_1} : \ch{\tge x_0} \to \tge x_1 \comp 0 \ch{\sce x_0} : \orig \to \tge x_0$ for $n = 2$.
\begin{displaymath}
  \begin{xy}
   \xymatrix{\overset{\orig}{\cdot} \ar[d]^{\ch{x}}\\
                       \underset{x}{\cdot}}
  \end{xy}
\hspace{5em}
  \begin{xy}
    \xymatrix @=1.5em{
&\overset{\orig}{\cdot} 
           \ar[ldd]_{\ch{\sce x_0}} 
            \ar[rdd]^{\ch{\tge x_0}}
         &
        \ar@2{->}(11,-13)*{};(7,-16)*{}_(.1)*-<4pt>{\scriptstyle{\ch{x}}}\\
&&\\
\underset{\makebox[0pt]{$\scriptstyle{\sce x_0}\ \ \ $}}{\cdot} 
\ar[rr]_x
&&
\underset{\makebox[0pt]{$\ \ \ \scriptstyle{\tge x_0}$}}{\cdot}  }
  \end{xy}
\hspace{5em}
  \begin{xy}
    \xymatrix @=1.2em{
&&
\overset{\orig}{\cdot} 
          \ar[lldddd]_{\ch{\sce x_0}} 
           \ar[rrdddd]^{\ch{\tge x_0}}
            \ar@2{.>}(15,-30)*{};(15,-34)*{}^(.3){x}
&&\\
&&&& \\
&&&         
             \ar@2@/^1pc/[ddlll]|(.4){\ch{\tge x_1}}
              \ar@2{.>}@/_.8pc/[ddlll]|(.3){\ch{\sce x_1}}
              \ar@3{.>}(15,-23)*{};(8,-23)*{}|(.45){\ch{x}}&
\\
&&&&
\\
     \underset{\makebox[0pt]{$\scriptstyle{\sce x_0}\ \ \ $}}{\cdot}
              \ar@{.>}@/^1pc/[rrrr]|(.75){\sce x_1}
               \ar@/_1pc/[rrrr]|(.75){\tge x_1}
&&&&
      \underset{\makebox[0pt]{$\ \ \ \scriptstyle{\tge x_0}$}}{\cdot}  }
  \end{xy}
\end{displaymath}
\end{examples}

\begin{paragr}
The four axioms of paragraph~\ref{paragr:exp} can be rewritten as follows for any $n > p$,
for any $n$-cells $x, y$ such that $\tge x_p = \sce y_p$, and for any cell $u$:
\[
\ch{y \comp p x} = \tge y_{p+1} \comp 0 \ch{\sce x_0} \comp 1 \cdots \comp{p-1} \ch{\sce x_{p-1}} \comp p \ch x \comp{p+1} \ch y, \quad
\ch{\unit u} = \unit{\ch u}, \quad
\cch u = \unit{\ch u}, \quad
\ch \orig = \unit \orig.
\]
\end{paragr}

\begin{remarks}\ %
\begin{itemize}
\item In case $n = p+1$, we get $\tge y_{p+1} = y$, so that our first axiom can be rewritten as follows:
\[
\ch{y \comp p x} = y \comp 0 \ch{\sce x_0} \comp 1 \cdots \comp{p-1} \ch{\sce x_{p-1}} \comp p \ch x \comp{p+1} \ch y.
\]
\item The last axiom has a single occurrence:
\begin{displaymath}
 \begin{xy}
   \xymatrix{\overset{\orig}{\cdot} \ar[d]^{\ch \orig \, = \, \unit \orig}\\
                       \underset{\orig}{\cdot}}
  \end{xy}
\end{displaymath}
If we write $\orig = \ch \jok$ where $\jok$ is an extra cell of dimension $-1$,
this axiom becomes a particular case of the previous one : $\cch \jok = \unit{\ch \jok}$.
We shall not introduce such a cell but we shall use a similar idea in our \emph{simplicial notation} for generators of orientals.
\end{itemize}
\end{remarks}

\begin{examples}\ %
\begin{itemize}
\item If $x, y$ are 1-cells such that $\tge x_0 = \sce y_0$, we get $\ch{y \comp 0 x} = y \comp 0 \ch x \comp 1 \ch y$.
\item If $x, y$ are 2-cells such that $\tge x_0 = \sce y_0$, we get $\ch{y \comp 0 x} = \tge y_1 \comp 0 \ch x \comp 1 \ch y$.
\item If $x, y$ are 2-cells such that $\tge x_1 = \sce y_1$, we get $\ch{y \comp 1 x} =  y \comp 0 \ch{\sce x_0} \comp 1 \ch x \comp 2 \ch y$.
\end{itemize}
\begin{displaymath}
  \begin{xy}
    \xymatrix{
&\overset{\orig}{\cdot} 
        \ar[ldd]
        \ar[dd] 
        \ar[rdd]
       &
      \ar@2{->}(10,-17)*{};(5,-21)*{}_(.1)*-<4pt>{\scriptstyle{\ch{x}}}
       \ar@2{->}(18,-17)*{};(14,-21)*{}_(.1)*-<4pt>{\scriptstyle{\ch{y}}}\\
&&\\
\cdot 
\ar[r]_x
& \cdot
\ar[r]_y
&\cdot}
  \end{xy}
\hspace{5em}
  \begin{xy}
    \xymatrix{
&&
     \overset{\orig}{\cdot} 
          \ar[dd]
           \ar[rrdd]
           \ar[lldd]
            \ar@2{.>}(12,-23)*{};(12,-26)*{}^{x}
            \ar@2{.>}(32,-23)*{};(32,-26)*{}^{y}
&&\\
& &         \ar@2@/^{1pc}/[dll]
              \ar@2{.>}@/_/[dll]
              \ar@3{.>}(14,-19)*{};(9,-19)*{}_(.4){\ch{x}}
               \ar@3{.>}(31,-19)*{};(26,-19)*{}_(.3){\ch{y}}
&
\ar@2@/^{1pc}/[dl]
              \ar@2{.>}@/_/[dl]
& \\
           \cdot
              \ar@{.>}@/^1pc/[rr]
               \ar@/_1pc/[rr]
&&
      \cdot  
\ar@{.>}@/^1pc/[rr]
               \ar@/_1pc/[rr]|(.6){\tge y_1}
&& \cdot }
  \end{xy}
\hspace{5em}
  \begin{xy}
    \xymatrix @=1.2em{
&&
\overset{\orig}{\cdot} 
          \ar[lldddd]_{\ch{\sce x_0}}
           \ar[rrdddd]
             \ar@2{.>}(14,-27)*{};(14,-29)*{}^(.3){x}
            \ar@2{.>}(14,-31)*{};(14,-33)*{}^(.3){y}
&&\\
&&&& \\
&&&         
             \ar@2@/^{1.1pc}/[ddlll]
              \ar@2{.>}@/_{.9pc}/[ddlll]
               \ar@2{.>}[ddlll]
\ar@3{.>}(16,-23)*{};(13,-23)*{}_(.3){\ch{y}}
\ar@3{.>}(10,-23)*{};(6,-23)*{}_(.3){\ch{x}}              
&
\\
&&&&
\\
     \cdot
              \ar@{.>}@/^{1pc}/[rrrr]
               \ar@{.>}[rrrr]
               \ar@/_{1pc}/[rrrr]
&&&&
     \cdot  }
  \end{xy} \vspace{1ex}
\end{displaymath}
\begin{itemize}
\item If $u$ is a 0-cell, we have $\unit u : u \to u$ and $\ch u : \orig \to u$. So we get 
\[
\ch{\unit u} = \unit{\ch u} : \ch u \to \unit u \comp 0 \ch u = \ch u \mbox{ and }
\cch u = \unit{\ch u} : \ch u \to \ch u \comp 0 \ch \orig = \ch u \comp 0 \unit \orig = \ch u.
\]
\begin{displaymath}
   \begin{xy}
    \xymatrix @=1.3em{
&&\overset{\orig}{\cdot} 
           \ar[llddd]_{\ch u} 
            \ar[rrddd]^{\ch u}
         &&
        \ar@2{->}(20,-14)*{};(9,-21)*{}|{\ch{\unit u} \, = \, \unit{\! \ch u}}\\
&&&&\\
&&&&\\
\underset{\makebox[0pt]{$\scriptstyle{u}\ \ \ $}}{\cdot} 
\ar[rrrr]_{\unit{u}}
&&&&
\underset{\makebox[0pt]{$\ \ \ \scriptstyle{u}$}}{\cdot}  }
  \end{xy}
\hspace{8em}
  \begin{xy}
    \xymatrix @=1.3em{
&&\overset{\orig}{\cdot} 
           \ar[llddd]_{\ch \orig \, = \, \unit{\orig}} 
            \ar[rrddd]^{\ch u}
         &&
        \ar@2{->}(20,-14)*{};(9,-21)*{}|{\cch u \, = \, \unit{\! \ch u}}\\
&&&&\\
&&&&\\
\underset{\makebox[0pt]{$\scriptstyle{\orig}\ \ \ $}}{\cdot} 
\ar[rrrr]_{\ch u}
&&&&
\underset{\makebox[0pt]{$\ \ \ \scriptstyle{u}$}}{\cdot}  }
  \end{xy}
\end{displaymath}

\item If $u$ is a 1-cell, we have $\unit u : u \to u : \sce u_0 \to \tge u_0$ and
$\ch u : \ch{\tge u_0} \to u \comp 0 \ch{\sce u_0} : \orig \to \tge u_0$.
So~we~get $\ch{\unit u} = \unit{\ch u} : \ch u \to \unit u \comp 0 \ch{\sce u_0} \comp 1 \ch u = \ch u$
and
\[
\begin{array}{c}
\cch u = \unit{\ch u} : \ch{\! \tge{\ch u}_1} = \ch{u \comp 0 \ch{\sce u_0} \!} = u \comp 0 \cch{\sce u_0} \comp 1 \ch u =
u \comp 0 \unit{\ch{\sce u_0}} \comp 1 \ch u = \ch u \to \\
\ch u \comp 0 \ch{\! \sce{\ch u}_0} \comp 1 \ch{\! \sce{\ch u}_1} = \ch u \comp 0 \ch \orig \comp 1 \cch{\tge u_0} =
\ch u \comp 0 \unit \orig \comp 1 \unit{\ch{\tge u_0}} = \ch u.
\end{array}
\]
\begin{displaymath}
  \begin{xy}
    \xymatrix @=1.2em{
&&
\overset{\orig}{\cdot} 
          \ar[lldddd]_{\ch{\sce u_0}} 
           \ar[rrdddd]^{\ch{\tge u_0}}
            \ar@2{.>}(15,-30)*{};(15,-34)*{}^(.3){\unit{u}}
&&\\
&&&& \\
&&&         
             \ar@2@/^1.3pc/[ddlll]|(.4){\ch u}
              \ar@2{.>}@/_.8pc/[ddlll]|(.2){\ch u}
              \ar@3{.>}(17,-23)*{};(8,-23)*{}_{\ch{\unit u} \, = \, \unit{\! \ch u}}&
\\
&&&&
\\
     \underset{\makebox[0pt]{$\scriptstyle{\sce u_0}\ \ \ $}}{\cdot}
              \ar@{.>}@/^1pc/[rrrr]|(.7){u}
               \ar@/_1pc/[rrrr]|(.7){u}
&&&&
      \underset{\makebox[0pt]{\ \ \ $\scriptstyle{\tge u_0}$}}{\cdot}}
  \end{xy}
\hspace{8em}
\begin{xy}
    \xymatrix @=2em{
&&\overset{\orig}{\cdot}
\ar[lldddd]_{\ch \orig \, = \, \unit{\orig}}
\ar[rrrddd]^{\ch{\tge u_0}}
\ar[rddddd]|(.7){\ch{\sce u_0}}
&&& \\
&&&&& \\
&&&
\ar@2{.>}@<-6pt>[ld]|{\cch{\tge u_0} \, = \, \unit{\! \ch{\tge u_0}}}
\ar@2{->}@<8pt>[d]^{\ch u}
&& \\
&&
\ar@2{->}[ld]|{\cch{\sce u_0} \, = \, \unit{\! \ch{\sce u_0}}}
&
\ar@3{.>}@<4pt>[l]^(.4){\cch u \, = \, \unit{\! \ch u}}
&&{\cdot}\ {\scriptstyle\tge u_0}
\\
{\scriptstyle\orig}\ \cdot
\ar[rrrd]_{\ch{\sce u_0}}
\ar@{.>}[rrrrru]|(.75){\ch{\tge u_0}}&&
\ar@2{}@<-1pt>[rd]_(-0.10){}="s"
\ar@2{}@<-1pt>[rd]_(0.60){}="t"
\ar@2{.>}@<-1pt>"s";"t"|{\ch u}
&&& \hbox to 2mm{\hfill\vbox to 10mm{}}\\
&&&\underset{\sce u_0}{\cdot}
\ar[rruu]_u&&
}
\end{xy}
\end{displaymath}
\end{itemize}
\end{examples}

\subsection{Syntax for orientals}

In paragraph~\ref{paragr:construct_free_exp}, the unit $\munit : C \to \expan C$ of the expansion monad is considered as an inclusion,
but since orientals are obtained by iterating this monad, our \emph{simplicial notation} for orientals uses an explicit \emph{shift}.

\begin{paragr}
Let us introduce the following notations:
\begin{itemize}
\item If $x$ is an $m$-cell in $\norient n$, we write $\shift x$, called \emph{shift} of $x$, for the $m$-cell $\munit(x)$ in $\norient{n+1}$.
\item We write $\smp 0$ for the origin $\orig$, which is a $0$-cell in $\norient n$ for any $n$.
\item More generally, if $0 \leq i \leq n$, we write $\smp i$ for the $0$-cell $\munit^i(\orig)$ in $\norient n$.
\item If $x$ is an $m$-cell in $\norient n$, we write $\sch 0 x$ for the chevron $\ch x$, which is an $(m+1)$-cell in~$\norient n$.
\item More generally, if $0 \leq i \leq n$ and $x'$ is an $m$-cell in $\norient n$ of the form $\munit^i(x)$ for some $x$~in~$\norient{n-i}$,
we write~$\sch i {x'}$ for the $(m+1)$-cell $\munit^i \ch x$ in $\norient n$.
\end{itemize}
\end{paragr}

\begin{remarks}\ %
\begin{itemize}
\item Shift is a notation for the embedding $\munit : \norient n \emb \expan{\norient n} = \norient{n+1}$
induced by the map $i \mapsto i+1$ from $\deltan n  = \set{0, \ldots, n}$ to $\deltan{n+1} = \set{0, \ldots, n+1}$,
which must not be confused with the \emph{canonical inclusion} $\norient n \subset \norient {n+1}$
induced by the inclusion $\deltan n \subset \deltan{n+1}$.
\item In practice, $\shift x$ is obtained by incrementing all integers occurring in $x$,
or more precisely, by applying the following rules:
\[
\shift{\smp i} = \smp{i{+}1}, \qquad
\shift{\sch i x} = \sch{i{+}1}{\shift x}, \qquad
\shift{y \comp p x} = \shift y \comp p \shift x, \qquad
\shift{\unit u} = \unit{\shift u}.
\]
\end{itemize}
\end{remarks}

\begin{paragr}
By paragraph~\ref{paragr:def_expan_S}, the oriental $\norient n$ has a $0$-generator $\smp 0$,
and any $m$-generator $s$ of $\norient n$ yields two generators of~$\norient{n+1}$:
\begin{itemize}
\item a \emph{shifted} $m$-generator $s' = \shift s$ standing for $\munit(s)$, with shifted source and target if $m > 0$,
\item an \emph{expanded} $(m{+}1)$-generator $\sch 0 {s'}$, which is just a new notation for the chevron $\ch{s'}$,
whose source and target are given by the same formulas as in paragraph~\ref{paragr:chevron}.
\end{itemize}
\end{paragr}

\begin{paragr} \label{paragr:generators-On}
By induction on $n$, we get that any 0-generator of $\norient n$ is of the form $\smp i$ with $0 \leq i \leq n$,
and more generally, any $m$-generator of $\norient n$ is of the form
\[
\sch{i_0}{\! \sch{i_1}{\ldots, \! \smp{i_m} \cdots} \!} \mbox{ with } 0 \leq i_0 < i_1 < \cdots < i_m \leq n.
\]
In other words, the set of $m$-generators of $\norient n$ is in canonical bijection
with the set of injective order-preserving maps from $\deltan m$ to $\deltan n$.

We write $\smp{i_0, i_1, \ldots, i_m}$ for the above $m$-generator,
which is in fact defined by induction on~$m$:
\[
\smp{i_0, i_1, \ldots, i_m} = \sch{i_0}{\! \smp{i_1, \ldots, i_m} \!}.
\]
In particular, we get a single generator $\ppal{\norient n} = \smp{0, 1, \ldots, n}$ of maximal dimension $n$,
which is called the \emph{principal generator} of $\norient n$.
\end{paragr}

\begin{paragr}
The above notation extends to the case of a \emph{nondecreasing} sequence $i_0 \leq  i_1 \leq \cdots \leq i_m$.
This is easily seen by induction on $m$, since we get 
$\smp{i_0, i_1, \ldots, i_m} = \munit^{i_0} \sch 0 {\! \smp{i_1 - i_0, \ldots, i_m - i_0} \!}$,
but this defines a generator only if the sequence is (strictly) increasing. Otherwise, we get a~unit.
For instance, if $0 = i_0 = i_1 \leq i_2 \cdots \leq i_m$, then we get the following unit:
\[
\smp{0, 0, i_2, \ldots, i_m} = \sch 0 {\! \sch 0 {\! \smp{i_2, \ldots, i_m} \!} \!} = \unit{\smp{0, i_2, \ldots, i_m}}.
\]
The second equality follows indeed from the degeneracy axiom $\cch u= \unit{\ch u}$
where $u = \smp{i_2, \ldots, i_m}$ and $\ch u$ is written $\sch 0 u$.
\end{paragr}

\begin{examples}\ %
\begin{itemize}
\item $\norient 0 = \expan \emptyset$ has a single 0-generator $\smp 0$.
\item $\norient 1 = \expan{\norient 0}$ has the generator of $\norient 0$ and the following ones:
\begin{itemize}
\item the 0-generator $\shift{\smp 0} = \smp 1$,
\item the 1-generator $\schsmp 0 1 = \smp{0,1} : \smp 0 \to \smp 1$.
\end{itemize}
\item $\norient 2 = \expan{\norient 1}$ has the generators of $\norient 1$ and the following ones: 
\begin{itemize}
\item the 0-generator $\shift{\smp 1} = \smp 2$,
\item the 1-generator $\schsmp 0 2 = \smp{0,2} : \smp 0 \to \smp 2$,
\item the 1-generator $\shift{\smp{0,1}} = \smp{1,2} : \smp 1 \to \smp 2$,
\item the 2-generator $\schsmp 0 {1,2} =\smp{0,1,2} : \smp{0,2} \to \smp{1,2} \comp 0 \smp{0,1}$.
\end{itemize}
For $s = \smp{1,2} : \smp 1 \to \smp 2$, we get indeed
\[
\sch 0 s : \sch 0 {\tge s_0} = \schsmp 0 2 = \smp{0,2} \to s \comp 0 \sch 0 {\sce s_0} = \smp{1,2} \comp 0 \schsmp 0 1 = \smp{1,2} \comp 0 \smp{0,1}.
\]
\item $\norient 3 = \expan{\norient 2}$ has the generators of $\norient 2$ and the following ones:
\begin{itemize}
\item the 0-generator $\shift{\smp 2} = \smp 3$,
\item the 1-generator $\smp{0,3} : \smp 0 \to \smp 3$,
\item the 1-generator $\shift{\smp{0,2}} = \smp{1,3} : \smp 1 \to \smp 3$,
\item the 2-generator $\smp{0,1,3} : \smp{0,3} \to \smp{1,3} \comp 0 \smp{0, 1}$,
\item the 1-generator $\shift{\smp{1,2}} = \smp{2,3} : \smp 2 \to \smp 3$,
\item the 2-generator $\smp{0,2,3} : \smp{0,3} \to \smp{2,3} \comp 0 \smp{0,2}$,
\item the 2-generator $\shift{\smp{0,1,2}} = \smp{1,2,3} : \smp{1,3} \to \smp{2,3} \comp 0 \smp{1,2}$,
\item the 3-generator $\smp{0,1,2,3} : \smp{2,3} \comp 0 \smp{0,1,2} \comp 1 \smp{0,2,3} \to \smp{1,2,3} \comp 0 \smp{0,1} \comp 1 \smp{0,1,3}$.
\end{itemize}
For $s = \smp{1,2,3} : \smp{1,3} \to \smp{2,3} \comp 0 \smp{1,2} : \smp 1 \to \smp 3$, we get indeed
\[
\begin{array}{c}
\sch 0 s : \sch 0 {\tge s_1} = \sch 0 {\! \smp{2,3} \comp 0 \smp{1,2} \!} = \smp{2,3} \comp 0 \schsmp 0 {1,2} \comp 1 \schsmp 0 {2,3} =
\smp{2,3} \comp 0 \smp{0,1,2} \comp 1 \smp{0,2,3} \to \\
s \comp 0 \sch 0 {\sce s_0} \comp 1 \sch 0 {\sce s_1} = \smp{1,2,3} \comp 0 \schsmp 0 1 \comp 1 \schsmp 0 {1,3}
= \smp{1,2,3} \comp 0 \smp{0,1} \comp 1 \smp{0,1,3}.
\end{array}
\]
\end{itemize}
\begin{displaymath}
  \overset{\smp{0}}{\cdot}
\hspace{4em}
  \begin{xy}
   \xymatrix{\overset{\smp{0}}{\cdot} \ar[d]^{\smp{0,1}}\\
                       \underset{\smp 1}{\cdot}}
  \end{xy}
\hspace{4em}
  \begin{xy}
    \xymatrix @=1.5em{
&\overset{\smp{0}}{\cdot} 
           \ar[ldd]_{\smp{0,1}} 
            \ar[rdd]^{\smp{0,2}}
         &
        \ar@2{->}(13,-11)*{};(5,-18)*{}|{\scriptstyle{\smp{0,1,2}}}\\
&&\\
\underset{\makebox[0pt]{$\scriptstyle{\smp 1}\ \ \ $}}{\cdot} 
\ar[rr]_{\smp{1,2}}
&&
\underset{\makebox[0pt]{$\ \ \ \scriptstyle{\smp 2}$}}{\cdot}  }
  \end{xy}
\hspace{4em}
  \begin{xy}
    \xymatrix @=1.2em{
&&\overset{\smp{0}}{\cdot}
\ar[lldddd]_{\smp{0,1}}
\ar[rrrddd]^{\smp{0,3}}
\ar[rddddd]|(.2){\smp{0,2}}
&&& \\
&&&&& \\
&&&
\ar@2{.>}@<-5pt>[ld]_(.7)*-<3pt>{\scriptstyle{\smp{0,1,3}}}
\ar@2{->}@<3pt>[d]|{\smp{0,2,3}}
&& \\
&&&
\ar@3{.>}@<4pt>[l]^(.4){\smp{0,1,2,3}}
&&{\hbox to 2mm{\hfill\vbox to 2mm{}}\cdot\scriptstyle{\smp 3}}
\\
*{\scriptstyle{\smp 1}\textstyle{\cdot}\hbox to 3mm{\hfill\vbox to 4mm{}}}
\ar[rrrd]_{\smp{1,2}}
\ar@{.>}[rrrrru]|(.75){\smp{1,3}}&&
\ar@2{.>}[rd]!<-3ex,4ex>^*-<3pt>{\scriptstyle{\smp{1,2,3}}}
\ar@2{->}@<-6pt>[l]_*-<3pt>{\scriptstyle{\smp{0,1,2}}}&&& \hbox to 2mm{\hfill\vbox to 10mm{}}\\
&&&\underset{\smp 2}{\cdot}
\ar[rruu]_{\smp{2,3}}&&
}
  \end{xy}
\end{displaymath}
\end{examples}

\begin{remarks}\ %
\begin{itemize}
\item We have a canonical inclusion $\norient n \subset \norient{n+1}$, so that once the generators of $\norient n$ are known,
it suffices to give the \emph{last generation} (of generators) of $\norient{n+1}$.
The latter is obtained by applying $s \mapsto s' = \shift s$ and then $s' \mapsto \sch 0 {s'}$ to the last generation (of generators) of $\norient n$.
In particular, it does not contain the origin $\smp 0$.
\item Once we have computed the source $u$ and the target $v$ of the principal generator $\ppal{\norient m} $,
we get the source and target of any other $m$-generator $\smp{i_0, \ldots, i_m}$ of $\norient n$ for $n > m$
by applying the substitution $0 \mapsto i_0$, \ldots, $m \mapsto i_m$ to $u$ and to $v$.
In other words, we apply the functor $\orient : \scat \to \ocat$, which is described in the next subsection, to this substitution,
seen as an injective order-preserving map from $\deltan m$ to $\deltan n$.
\end{itemize}
\end{remarks}

\subsection{Expansion monad on orientals}

\begin{paragr} \label{paragr:explicit-monad}
By construction, the expansion monad restricts to orientals:
\begin{itemize}
\item It maps the oriental $\norient n$ to the oriental $\expan{\norient n} = \norient{n+1}$, and the \oo-functor $f : \norient n \to \norient{n'}$
to the \oo-functor $\expan f : \expan{\norient n} = \norient{n+1} \to \expan{\norient{n'}} = \norient{n'+1}$ defined as follows by proposition~\ref{prop:monad_action_functor}:
\[
\expan f \smp 0 = \smp 0, \qquad
\left\{
\begin{array}{r@{\mbox{} = \mbox{}}l}
\expan f \shift s & \shift{f \, s}, \vspace{1ex} \\
\expan f \sch 0 {\shift s} & \sch 0 {\shift{f \, s}}
\end{array}
\right. \mbox{for any $m$-generator $s$ of } \norient n.
\]
\item Its \emph{unit} is the \oo-functor $\munit: \norient n \emb \expan{\norient n} = \norient{n+1}$ defined as follows:
\[
\munit(s) = \shift s \mbox{ for any $m$-generator $s$ of } \norient n.
\]
\item Its \emph{multiplication} is the \oo-functor $\mult : \eexpan{\norient n} = \norient{n+2} \to \expan{\norient n} = \norient{n+1}$
defined as follows by proposition~\ref{prop:mult}:
\[
\mult{\smp 0} = \smp 0, \qquad
\left\{
\begin{array}{r@{\mbox{} = \mbox{}}l}
\mult \shift s & s, \vspace{1ex} \\
\mult \sch 0 {\shift s} & \sch 0 s
\end{array}
\right. \mbox{for any $m$-generator $s$ of } \norient{n+1}.
\]
\end{itemize}
More explicitly, we get the following formulas for $\munit$ and $\mult$:
\[
\begin{array}{c}
\munit \smp{i_0, \ldots, i_m} = \smp{i_0{+}1, \ldots, i_m{+}1} \mbox{ for } 0 \leq i_0 < \cdots < i_m \leq n, \vspace{2ex} \\
\left\{
\begin{array}{r@{\mbox{} = \mbox{}}l}
\mult \smp{i_0{+}1, \ldots, i_m{+}1} & \smp{i_0, \ldots, i_m}, \vspace{1ex} \\
\mult \smp{0, i_0{+}1, \ldots, i_m{+}1} & \smp{0, i_0, \ldots, i_m}
\end{array}
\right. \mbox{for } 0 \leq i_0 < \cdots < i_m \leq n + 1.
\end{array}
\]
\end{paragr}

\begin{remarks}\ %
\begin{itemize}
\item Unlike $\munit$, the \oo-functor $\mult$ is not rigid, since we get the following degenerate case for~$i_0 = 0$:
\[
\mult \smp{0, 1, i_1{+}1, \ldots, i_m{+}1} = \smp{0, 0, i_1, \ldots, i_m} = \unit{\smp{0, i_1, \ldots, i_m}}.
\]
\item Our formulas for $\mult$ and $\munit$ are \emph{generic}, since they do not depend on the dimension $n$ of $\norient n$.
For that reason, objects are omitted in our notation for those natural transformations.
\end{itemize}
\end{remarks}

\begin{example}
The \oo-functor $\mult : \norient 3 \to \norient 2$ is defined as follows:
\[
\begin{array}{c}
\mult \smp 0 = \mult \smp 1 = \smp 0, \quad
\mult \smp 2 = \smp 1, \quad
\mult \smp 3 = \smp 2, \vspace{1ex} \\
\mult \smp{0,1} = \unit{\smp 0}, \quad
\mult \smp{0,2} = \mult \smp{1,2} = \smp{0,1}, \quad
\mult \smp{0,3} = \mult \smp{1,3} = \smp{0,2}, \quad
\mult \smp{2,3} = \smp{1,2}, \vspace{1ex} \\
\mult \smp{0,1,2} = \unit{\smp{0,1}}, \quad
\mult \smp{0,1,3} = \unit{\smp{0,2}}, \quad
\mult \smp{0,2,3} = \mult \smp{1,2,3} = \smp{0,1,2}, \vspace{1ex} \\
\mult \smp{0,1,2,3} = \unit{\smp{0,1,2}}.
\end{array}
\]
\end{example}

\begin{proposition} \label{prop:cosimplicial}
The cosimplicial object of orientals $\orient : \scat \to \ocat$ maps the object $\deltan n$ to the oriental $\norient n$,
and the order-preserving map $\phi : \deltan n  \to \deltan{n'}$ to the \oo-functor $f : \norient n \to \norient{n'}$ defined as follows:
\[
f \smp{i_0, \ldots, i_m} = \smp{\phi(i_0), \ldots, \phi(i_m)} \mbox{ for } 0 \leq i_0 < \cdots < i_m \leq n.
\]
\end{proposition}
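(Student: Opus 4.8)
The plan is to check the formula on the generating morphisms of $\scat$, the cofaces $\faced i n$ and codegeneracies $\degend i n$, and then propagate it to arbitrary order-preserving maps by functoriality, using that an \oo-functor out of an \oo-category freely generated by a polygraph is determined by its values on generators (Lemma~\ref{lemma:universal}, applied dimension by dimension). On objects there is nothing to prove: $\orient(\deltan n) = \exmon^{n+1}(\emptyset) = \norient n$ by Definition~\ref{def:orientals} and paragraph~\ref{paragr:free-monad}. Call an \oo-functor $g : \norient k \to \norient{k'}$ of \emph{substitution type} if there is an order-preserving $\chi : \deltan k \to \deltan{k'}$ with $g\smp{j_0,\dots,j_l} = \smp{\chi(j_0),\dots,\chi(j_l)}$ on every generator, the right-hand side read in the simplicial notation of paragraph~\ref{paragr:generators-On}, extended to nondecreasing index sequences as explained there (a genuine generator when $\chi(j_0) < \dots < \chi(j_l)$, an iterated unit otherwise); the claim is that $\orient(\phi)$ is of substitution type with $\chi = \phi$.

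Granting that $\orient(\faced i n)$ and $\orient(\degend i n)$ are of substitution type with $\chi$ the corresponding coface, resp.\ codegeneracy, the general case is formal. Factoring an arbitrary $\phi$ as a composite $\psi_k \circ \dots \circ \psi_1$ of cofaces and codegeneracies, apply $\orient(\psi_1), \dots, \orient(\psi_k)$ in turn to a generator $\smp{i_0,\dots,i_m}$; it suffices to know that an \oo-functor of substitution type for some $\chi$ sends the possibly degenerate cell $\smp{j_0,\dots,j_m}$ to $\smp{\chi(j_0),\dots,\chi(j_m)}$. On genuine generators this is the definition, and on units it follows because \oo-functors preserve units, together with the recursion $\smp{j_0,\dots,j_m} = \sch{j_0}{\smp{j_1,\dots,j_m}}$ and the degeneracy axiom $\cch u = \unit{\ch u}$, which make the extended notation stable under order-preserving reindexing. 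Since $\norient n$ is free on a polygraph, this determines $\orient(\phi)$ and forces $\chi = \psi_k \circ \dots \circ \psi_1 = \phi$.

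For the generators I would unwind the explicit descriptions of paragraph~\ref{paragr:explicit-monad}. First, if $g : \norient k \to \norient{k'}$ is of substitution type for $\chi$, then $\exmon g : \norient{k+1} \to \norient{k'+1}$ is of substitution type for the map $\chi^{+} : \deltan{k+1} \to \deltan{k'+1}$ with $\chi^{+}(0) = 0$ and $\chi^{+}(j) = \chi(j-1)+1$ for $j \ge 1$: this is exactly what the rules $\exmon g\,\smp 0 = \smp 0$, $\exmon g\,\shift s = \shift{g\,s}$ and $\exmon g\,\sch 0{\shift s} = \sch 0{\shift{g\,s}}$ say once shift and chevron are spelled out in the simplicial notation. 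Likewise, the monad unit $\munit$ is of substitution type for the injection $j \mapsto j+1$, and the multiplication $\mult$ for the surjection collapsing the two smallest elements (the formulas $\mult\smp{i_0+1,\dots,i_m+1} = \smp{i_0,\dots,i_m}$ and $\mult\smp{0,i_0+1,\dots,i_m+1} = \smp{0,i_0,\dots,i_m}$). Substituting these into $\orient(\faced i n) = \exmon^{n-i}\munit\exmon^i(\emptyset)$ and $\orient(\degend i n) = \exmon^{n-i}\mult\exmon^i(\emptyset)$ from paragraph~\ref{paragr:free-monad}, the proof reduces to iterating $\chi \mapsto \chi^{+}$ the required number of times on the elementary shift, resp.\ collapse, map and identifying the result as $\faced i n$, resp.\ $\degend i n$ --- a short induction on the number of iterations.

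I expect this last identification to be the only real point of care: keeping track, through the iterated $(-)^{+}$ operation and the shift built into the simplicial notation, of how the index $i$ of $\faced i n$ (and of $\degend i n$) is recovered from the monad-theoretic expression $\exmon^{n-i}\munit\exmon^i$ of paragraph~\ref{paragr:free-monad}. The rest --- reduction to generators, propagation along composites, and the degenerate unit cases --- is formal.
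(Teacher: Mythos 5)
Your proposal is correct and follows essentially the same route as the paper, whose proof is precisely the one-line reduction you spell out: combine the description $\orient(\faced i n)=\exmon^{n-i}\munit\exmon^i(\emptyset)$, $\orient(\degend i n)=\exmon^{n-i}\mult\exmon^i(\emptyset)$ of paragraph~\ref{paragr:free-monad} with the explicit formulas of paragraph~\ref{paragr:explicit-monad} for $\exmon$ on morphisms, $\munit$ and $\mult$, and propagate along composites (your closure of ``substitution type'' under $\exmon$, the treatment of degenerate index sequences via units, and freeness on a polygraph are exactly the implicit steps). The only point you defer --- the induction identifying the iterate of $\chi\mapsto\chi^{+}$ applied to the shift, resp.\ collapse, with the correct coface, resp.\ codegeneracy --- is likewise left implicit in the paper; when you carry it out, take care with the whiskering convention in $c^{}_\exmon(\faced i n)=\exmon^{n-i}\munit\exmon^i$, since with the componentwise reading $\exmon^{n-i}(\munit_{\exmon^i(\emptyset)})$ the iteration lands on the face missing $n-i$, so the convention must be fixed so that the stated indexing comes out.
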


\begin{proof}
By the description of the functor $\orient : \scat \to \ocat$ given in paragraph~\ref{paragr:free-monad},
it suffices to apply the formulas of paragraph~\ref{paragr:explicit-monad}.
\end{proof}

\section{Comparison with Street's orientals}
\label{comparison}

\subsection{Steiner's theory}

All the definitions and results presented in this subsection are extracted
from~\cite{Steiner}.

\begin{definition}[Augmented directed complexes]\ %

  \noindent
  An \ndef{augmented directed complex} $K$ consists of an augmented chain
  complex of abelian groups in non-negative degrees
  \[
  \xymatrix{
     \cdots \ar[r]^d & K_n \ar[r]^d & \cdots \ar[r]^d & K_1 \ar[r]^d & K_0
     \ar[r]^e & \Z
  }
  \]
  (meaning that we have $dd = 0$ and $ed = 0$) endowed with a submonoid
  $K_n^\ast$ of~$K_n$ for every $n \ge 0$.

  If $K$ and $L$ are two such augmented directed complexes, a
  \ndef{morphism} from $K$ to $L$ is a morphism $f$ of augmented chain
  complexes
  \[
  \xymatrix{
     \cdots \ar[r]^d & K_n \ar[r]^d \ar[d]_f & \cdots \ar[r]^d & K_1
     \ar[r]^d \ar[d]_f & K_0 \ar[d]_f \ar[r]^e & \Z \ar@{=}[d] \\
     \cdots \ar[r]^d & L_n \ar[r]^d & \cdots \ar[r]^d & L_1 \ar[r]^d & L_0
     \ar[r]^e & \Z \\
  }
  \]
  such that $f(K_n^\ast) \subset L^\ast_n$ for every $n \ge 0$.

  We will denote by $\ADC$ the category of augmented directed complexes.
\end{definition}

\begin{paragr}
  We define a functor $\lambda : \ooCat \to \ADC$ in the following way.
  Let $C$ be an \oo-category. For~$n \ge 0$, the abelian group
  $\lambda(C)_n$ is defined to be the quotient of the free abelian group on
  the set of $n$-cells of $C$ by the subgroup generated by the elements of
  the form $x \comp{j} y - x - y$, where $x, y$ is a pair of $n$-cells,
  $0 \le j < n$ and $x \comp{j} y$ is defined. We will denote by $[x]$ the
  image of an $n$-cell $x$ of $C$ in $\lambda(C)_n$. If $n > 0$, the
  differential $d : \lambda(C)_n \to \lambda(C)_{n-1}$ is defined on the
  generators by $d[x] = [\target{x}] - [\source{x}]$. If $n = 0$, the
  abelian group~$\lambda(C)_0$ is free on the set of objects of~$C$ and the
  augmentation $e : \lambda(C)_0 \to \Z$ is the sum of the coefficients.
  Finally, for $n \ge 0$, the monoid $K^\ast_n$ is the submonoid of $K_n$
  generated by the generators $[x]$.

  If $f : C \to D$ is an \oo-functor, then $\lambda(f)$ is defined on
  generators by $\lambda(f)[x] = [f(x)]$.

  One can check that these constructions are well-defined and indeed define
  a functor $\lambda$.
\end{paragr}

\begin{paragr}
  We now define a functor $\nu : \ADC \to \ooCat$.

  Let $K$ be an augmented directed complex. For $n \ge 0$, an $n$-cell of
  $\nu(K)$ is a table
  \[
      \begin{pmatrix}
        x_0^- & \cdots & x_n^- \\
        \noalign{\vskip 3pt}
        x_0^+ & \cdots & x_n^+ \\
      \end{pmatrix}
  \]
  where
  \begin{itemize}
    \item $x_i^\e$ belongs to $K_i^\ast$ for $\e = \pm$,
    \item $x^-_n = x^+_n$,
    \item $d(x_i^\e) = x^+_{i-1} - x^-_{i-1}$ for $0 < i \le n$ et $\e =
    \pm$,
    \item $e(x^\e_0) = 1$ for $\e = \pm$.
  \end{itemize}
  When $n > 0$, the source and target of such a table are given by the
  tables
  \[
      \begin{pmatrix}
        x_0^- & \cdots & x_{n-2}^- & x_{n-1}^- \\
        \noalign{\vskip 3pt}
        x_0^+ & \cdots & x_{n-2}^+ & x_{n-1}^- \\
      \end{pmatrix}
      \qquad\text{and}\qquad
      \begin{pmatrix}
        x_0^- & \cdots & x_{n-2}^- & x_{n-1}^+ \\
        \noalign{\vskip 3pt}
        x_0^+ & \cdots & x_{n-2}^+ & x_{n-1}^+ \\
      \end{pmatrix}
      .
  \]
  For $n \ge 0$, the identity of such a table is the table
  \[
      \begin{pmatrix}
        x_0^- & \cdots & x_{n}^- & 0 \\
        \noalign{\vskip 3pt}
        x_0^+ & \cdots & x_{n}^+ & 0 \\
      \end{pmatrix} .
  \]
  Finally, if
  \[
    x =
      \begin{pmatrix}
        x_0^- & \cdots & x_n^- \\
        \noalign{\vskip 3pt}
        x_0^+ & \cdots & x_n^+ \\
      \end{pmatrix}
    \qquad\text{and}\qquad
    y =
      \begin{pmatrix}
        y_0^- & \cdots & y_n^- \\
        \noalign{\vskip 3pt}
        y_0^+ & \cdots & y_n^+ \\
      \end{pmatrix}
  \]
  are two $n$-cells such that $\source_j{y} = \target_j{x}$ for a $j$ such
  that $0 \le j < n$, then the cell $y \comp{j} x$ is the table
  \[
      \begin{pmatrix}
        x_0^- & \cdots & x_j^- & x_j^- + y_j^- & \cdots & x_n^- + y_n^- \\
        \noalign{\vskip 3pt}
        y_0^+ & \cdots & y_j^+ & x_j^+ + y_j^+ & \cdots & x_n^+ + y_n^+ \\
      \end{pmatrix}.
  \]
  One can check that these cells and operations define an \oo-category $\nu(K)$.

  If $f : K \to L$ is a morphism of augmented directed complexes, then, for
  $n \ge 0$, the action of $\nu(f) : \nu(K) \to \nu(L)$ on $n$-cells is
  defined by
  \[
      \begin{pmatrix}
        x_0^- & \cdots & x_n^- \\
        \noalign{\vskip 3pt}
        x_0^+ & \cdots & x_n^+ \\
      \end{pmatrix}
      \mapsto
      \begin{pmatrix}
        f(x_0^-) & \cdots & f(x_n^-) \\
        \noalign{\vskip 3pt}
        f(x_0^+) & \cdots & f(x_n^+) \\
      \end{pmatrix}.
  \]

  One can check that these constructions define a functor $\nu : \ADC \to
  \ooCat$.
\end{paragr}

\begin{proposition}[Steiner]
  The functors
  \[
    \lambda : \ooCat \to \ADC
    \qquad
    \qquad
    \nu : \ADC \to \ooCat
  \]
  form a pair of adjoint functors.
\end{proposition}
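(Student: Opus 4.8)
The plan is to construct, for an \oo-category $C$ and an augmented directed complex $K$, a bijection
\[
  \operatorname{Hom}_{\ooCat}(C, \nu(K)) \;\isom\; \operatorname{Hom}_{\ADC}(\lambda(C), K)
\]
natural in $C$ and in $K$, thereby exhibiting $\lambda$ as left adjoint to $\nu$; this follows Steiner. From an \oo-functor $g : C \to \nu(K)$ one reads off, for each $n$-cell $x$ of $C$, the common last entry $g(x)_n^- = g(x)_n^+ \in K_n^\ast$ of the table $g(x)$. Since $g$ preserves $j$-compositions and the last column of the table of a composite $y \comp j x$ is the sum of those of $x$ and of $y$, the assignment $x \mapsto g(x)_n^-$ kills the relations $x \comp j y - x - y$, hence descends to a group homomorphism $\lambda(C)_n \to K_n$ which, the classes $[x]$ being generators, carries $\lambda(C)_n^\ast$ into $K^\ast_n$. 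That $g$ preserves sources and targets turns, via the source/target formulas for tables, into the identity $d\bigl(g(x)_n^-\bigr) = g(\target x)_{n-1}^- - g(\source x)_{n-1}^-$, so the homomorphism is a chain map, and it respects the augmentation since $g$ sends a $0$-cell to a table whose initial entry has augmentation $1$. This defines $\Phi(g) : \lambda(C) \to K$ in $\ADC$.

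Conversely, from a morphism $f : \lambda(C) \to K$ in $\ADC$ one builds $\Psi(f) : C \to \nu(K)$ by declaring the table of an $n$-cell $x$ to have column $i$ equal to $\bigl(f[\SCE_i x], f[\TGE_i x]\bigr)$ for $i < n$ and last column $\bigl(f[x], f[x]\bigr)$. The entries lie in the prescribed submonoids because $f$ is a morphism in $\ADC$ and $[\SCE_i x]$, $[\TGE_i x]$, $[x]$ are generators of $\lambda(C)$; the chain and augmentation conditions defining a cell of $\nu(K)$ follow from $f$ being a chain map over $\Z$ together with the globular identities $\SCE\SCE = \SCE\TGE$, $\TGE\SCE = \TGE\TGE$, which in particular identify the target of the $i$-source of $x$ with the $(i{-}1)$-target of $x$. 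Thus each $\Psi(f)(x)$ is a genuine cell of $\nu(K)$, and the remaining --- and main --- task is to check that $\Psi(f)$ is an \oo-functor. Preservation of sources and targets is a direct comparison of the iterated sources and targets of a cell with the source/target formulas for tables; preservation of units uses $[\unit u] = 0$ in $\lambda(C)$, which holds because $\unit u \comp n \unit u = \unit u$; and preservation of each $j$-composition uses that, for $z = y \comp j x$, one has $\eps z_i = \eps x_i = \eps y_i$ for $i < j$ while $[\eps z_i] = [\eps y_i] + [\eps x_i]$ for $i > j$, matched against the column-wise addition in the composition rule of $\nu(K)$.

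Finally one checks that $\Phi$ and $\Psi$ are mutually inverse. The composite $\Phi\circ\Psi$ is the identity because the last column of $\Psi(f)(x)$ is $(f[x], f[x])$, so reading off its last entry returns $f[x]$. For $\Psi\circ\Phi$, observe that for an \oo-functor $g$ and $i \le n$ one has $g(\SCE_i x) = \SCE_i\bigl(g(x)\bigr)$ and the last entry of the $i$-th iterated source of a table is precisely its $i$-th column entry $g(x)_i^-$ (and symmetrically for targets); hence $\Psi(\Phi(g))(x)$ has the same columns as $g(x)$ in every dimension. Naturality in $C$ and in $K$ is a routine unwinding of the definitions; the induced unit and counit are $\eta_C = \Psi(\mathrm{id}_{\lambda(C)})$, sending an $n$-cell $x$ to the table with columns $\bigl([\SCE_i x],[\TGE_i x]\bigr)$, and $\varepsilon_K = \Phi(\mathrm{id}_{\nu(K)})$, sending the class of a table to its last entry. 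The one genuinely laborious point is the \oo-functoriality of $\Psi(f)$, and within it the compatibility with every $j$-composition: that is where the precise correspondence between the additive structure of $\lambda(C)$ --- compositions of index strictly above $j$ becoming sums --- and the composition rule of $\nu(K)$, where the columns of index strictly above $j$ are added, must be verified dimension by dimension.
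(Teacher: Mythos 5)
Your construction is correct, and it is essentially Steiner's own argument for his Theorem~2.11; the paper itself gives no proof at this point but simply cites that theorem. So the difference is not mathematical but expository: where the paper treats the adjunction as a black box imported from \cite{Steiner}, you reconstruct the hom-set bijection $\operatorname{Hom}_{\ooCat}(C,\nu(K))\simeq\operatorname{Hom}_{\ADC}(\lambda(C),K)$ explicitly, and all the key points are in place --- reading off the last entry of a table kills the relations $[y\comp{j}x]-[x]-[y]$ because columns of index above $j$ add under composition in $\nu(K)$; the chain-map and augmentation conditions for $\Phi(g)$ come from the source/target formulas for tables; conversely the table built from $f[\SCE_i x]$, $[\TGE_i x]$, $f[x]$ is a genuine cell by globularity, and $[\unit u]=0$ (from $\unit u\comp{}\unit u=\unit u$) gives preservation of identities, while the additivity of $\lambda$ in degrees above $j$ matches column-wise addition for $j$-composition. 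The inverse computations and the description of the unit and counit are also right; what remains is exactly the ``routine'' functoriality and naturality bookkeeping you flag, which is how Steiner's proof goes as well. What your route buys is self-containedness and an explicit unit $C\to\nu\lambda(C)$, which in particular makes transparent why atoms of a unital complex are cells; what the paper's route buys is brevity, since Steiner's theory is used here only as an imported tool for the comparison theorem of Section~\ref{comparison}. One cosmetic caution: state clearly that the bijection is $\operatorname{Hom}_{\ooCat}(C,\nu(K))\simeq\operatorname{Hom}_{\ADC}(\lambda(C),K)$ with $\lambda$ the \emph{left} adjoint (as you do), and in the unit-preservation step index the composition as $\unit u\comp{p}\unit u=\unit u$ for a $p$-cell $u$ rather than $\comp{n}$.
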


\begin{proof}
  This is \cite[Theorem 2.11]{Steiner}.
\end{proof}

\begin{paragr}
  A \ndef{basis} of an augmented directed complex $K$ is a subgraded set
  $\coprod_{n \ge 0} B_n$ of $\coprod_{n \ge 0} K^\ast_n$ such that
  $B_n$ is a basis of the $\Z$-module $K_n$ that generates the monoid
  $K^\ast_n$. We will say that an augmented directed complex is \ndef{free}
  if it admits a basis.
\end{paragr}

\begin{remark}
  If such a basis exists, then we have, for $n \ge 0$,
  \[ K_n \simeq \Z^{(B_n)} \quad\text{and}\quad K^\ast_n \simeq \N^{(B_n)}. \]
  It follows from the second isomorphism that $B_n$ is uniquely determined
  by $K^\ast_n$. In other words, a free augmented directed complex admits a
  unique basis.
\end{remark}

\begin{paragraph}
  Let $K$ be a free augmented directed complex with basis $B$. Let $z$ be in
  $K_n$ for some $n \ge 0$. This element can be written in a unique way
  \[ z = \sum_{b \in B_n} z_b b , \]
  where $B_n$ is the basis of $K_n$. The \ndef{support} of $z$ is the subset
  of $B_n$ consisting of those $b$ such that~$z_b \neq 0$. We define
  $z^+$ and $z^-$ to be the unique elements of $K^\ast_n$ with disjoint
  supports such that
  \[ z = z^+ - z^-. \]

  For $n \ge 1$ and $x$ in $K_n$, we set
  \[ d^-(x) = d(x)^- \quad\text{and}\quad d^+(x) = d(x)^+, \]
  and, for $0 \le i < n$, we set
  \[ d^-_i(x) = (d^-)^{n-i}(x) \quad\text{and}\quad d^+_i(x) =
  (d^+)^{n-i}(x) . \]
  Note that $d^-_i(x)$ and $d^+_i(x)$ are elements of $K_i^\ast$.
\end{paragraph}

\begin{paragraph}
  Let $K$ be a free augmented directed complex with basis $B$. To any $x$ in
  $K^\ast_n$ for some~$n \ge 0$, we associate a table
  \[
      \begin{pmatrix}
        d_0^-(x) & \cdots & d_{n-1}^-(x) & x \\
        \noalign{\vskip 3pt}
        d_0^+(x) & \cdots & d_{n-1}^+(x) & x \\
      \end{pmatrix}.
  \]
  This table satisfies all the conditions to be an $n$-cell of $\nu(K)$
  except maybe that $e(d^-_0(x)) = 1$ and $e(d^+_0(x)) = 1$, where $e : K_0
  \to \Z$ is the augmentation of $K$.

  The complex $K$ is said to be \ndef{unital} if, for every $n \ge 0$ and
  every $x$ in $B_n$, we have $e(d^-_0(x)) = 1$ and $e(d^+_0(x)) = 1$. In
  this case, the table associated to $x$ is indeed an $n$-cell of $\nu(K)$
  that is called the \ndef{atom} of $x$.
\end{paragraph}

\begin{paragraph}
  A free augmented directed complex $K$ with basis $B$ is said to be
  \ndef{strongly loop-free} if there exists a partial order $\le$ on the set
  $B = \coprod_{n \ge 0} B_n$ such that, for every $x$ in $B_m$ and $y$
  in~$B_n$, if
  \[
    {
    \setstretch{0}
    \begin{cases}
      & \text{$m \ge 1$ and $y$ belongs to the support of $d^+(x)$} \\
      \text{or} \\
      & \text{$n \ge 1$ and $x$ belongs to the support of $d^-(y)$}
    \end{cases}
    }
  \]
  then we have $x \le y$.
\end{paragraph}

\begin{paragraph}
  A \ndef{strong Steiner complex} is a free augmented directed complex that is both unital and
  strongly loop-free.
\end{paragraph}

\begin{theorem}[Steiner]\label{thm:Steiner}
  The functor $\nu : \ADC \to \ooCat$ is fully faithful when restricted to
  strong Steiner complexes.
\end{theorem}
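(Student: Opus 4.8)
The plan is to deduce the statement from the fact that the counit of the adjunction $\lambda \dashv \nu$ is invertible on strong Steiner complexes. Write $\epsilon_K : \lambda\nu(K) \to K$ for the counit. For any augmented directed complex $L$, the composite
\[
  \homset{\ADC}{K}{L} \xrightarrow{\,-\,\circ\,\epsilon_K\,}
  \homset{\ADC}{\lambda\nu(K)}{L}
  \isom
  \homset{\ooCat}{\nu(K)}{\nu(L)},
\]
where the second map is the adjunction bijection, agrees with the action of $\nu$. Hence, if $\epsilon_K$ is an isomorphism for every strong Steiner complex $K$, then $\nu$ restricted to strong Steiner complexes is fully faithful, and it suffices to prove this.

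First I would identify $\epsilon_K$ explicitly. Given an $n$-cell
\[
  z = \begin{pmatrix} x_0^- & \cdots & x_n^- \\ x_0^+ & \cdots & x_n^+ \end{pmatrix}
\]
of $\nu(K)$, call $x_n^- = x_n^+ \in K_n$ its \emph{value}. The formula for $\comp{j}$-composition shows that the value of $y \comp{j} x$ is the sum of the values of $x$ and $y$; the formulas for source and target show that $d$ applied to the value of $z$ is the difference of the values of $\target z$ and $\source z$; and the value of any identity cell is $0$, while the value of a $0$-cell has augmentation $1$. Therefore $z \mapsto (\text{value of }z)$ induces a morphism of augmented directed complexes $\lambda\nu(K) \to K$, and checking that its image under $f \mapsto \nu(f)\circ\eta_{\nu(K)}$ is $\id{\nu(K)}$ identifies it with $\epsilon_K$. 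In particular, since $K$ is unital, each $b \in B_n$ has an atom $\atom b \in \nu(K)_n$ of value $b$, so $\epsilon_K([\atom b]) = b$.

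Surjectivity of $\epsilon_K$ in each degree is then immediate: the elements $b \in B_n$ lie in the image and form a $\Z$-basis of $K_n$. For injectivity, the crucial input — the step I expect to be the main obstacle — is the structural fact that for a \emph{strong} Steiner complex every $n$-cell of $\nu(K)$ is an iterated $\comp{j}$-composite of atoms $\atom b$ with $b \in B_n$. Granting this, the classes $[\atom b]$, $b \in B_n$, generate the abelian group $\lambda\nu(K)_n$, so there is a surjection $g : \Z^{(B_n)} \to \lambda\nu(K)_n$ sending $e_b$ to $[\atom b]$; its composite with $\epsilon_K$ is the canonical isomorphism $\Z^{(B_n)} \to K_n$, $e_b \mapsto b$. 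Hence $g$ is injective as well, so both $g$ and $\epsilon_K$ are isomorphisms (the compatibility with the submonoids $K^\ast_n$ being automatic from their description in terms of the $b$), and $\nu$ is fully faithful on strong Steiner complexes.

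It remains to establish that cells of $\nu(K)$ decompose into atoms, which is the technical heart of Steiner's theory. I would argue by induction on the sum of the coefficients of the value $x_n$ of the cell in the basis $B_n$, the base case being identities and atoms. For the inductive step, one uses the partial order on $B = \coprod_n B_n$ witnessing strong loop-freeness to locate a basis element $b$ in the support of $x_n$ that is extremal in the appropriate direction; strong loop-freeness then guarantees that $\atom b$ can be split off as one outermost factor of a $\comp{n-1}$-composite, while unitality guarantees that the complementary table is again a genuine cell of $\nu(K)$, now of strictly smaller value. Reassembling these factors, and invoking the loop-free order once more to control how the lower-dimensional compositions fit together, yields the required expression and completes the proof.
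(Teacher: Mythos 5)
Your reduction is sound: $\nu(f)$ corresponds under the adjunction to $f \circ \epsilon_K$, so it suffices to invert the counit on strong Steiner complexes; your identification of $\epsilon_K$ with the ``value'' map and the surjectivity argument via unitality are also correct, and this is indeed the route taken in \cite{Steiner}, which the paper simply cites. The gap is in the injectivity step: everything there rests on the claim that every cell of $\nu(K)$ is an iterated composite of (identities of) atoms, and this claim, which is precisely the technical heart of Steiner's Sections 4--6, is not proved by the induction you sketch. The base case already fails: a cell whose top value has coefficient sum $1$ need not be an identity or an atom --- a whiskered atom such as $u \comp{0} \atom{b}$, with $u$ a non-identity $1$-cell and $b \in B_2$, has top entry exactly $b$ but its degree-$1$ entries are $d^{\pm}_1(b) + [u] \neq d^{\pm}_1(b)$. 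For the same reason the inductive step as stated is wrong: strong loop-freeness does not let you split off the atom $\atom{b}$ itself as an outermost $\comp{n-1}$-factor (the example above is not of the form $w \comp{1} \atom{b}$ or $\atom{b} \comp{1} w$); at best one splits off a cell with top value $b$, i.e.\ again a whiskered atom, so an induction on the coefficient sum of the top entry alone cannot close. The actual argument requires a more elaborate scheme (induction on dimension together with the sizes of the supports of \emph{all} entries of the table, splitting at the lowest degree where the table is not yet atomic), and both the existence of such splittings and the verification that the complementary tables are again cells are exactly the nontrivial content of \cite{Steiner}. As written, the proposal therefore reproves the easy part of the theorem and defers, with an incorrect outline, the part that makes it a theorem.

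A side remark: within the paper's own toolkit your argument could be closed differently. Theorem~\ref{thm:Steiner_pol} asserts that $\nu(K)$ is freely generated, as a polygraph, by its atoms; combined with Proposition~\ref{prop:lambda_pol} this gives that $\lambda\nu(K)$ is free on the classes $[\atom{b}]$, and your computation $\epsilon_K([\atom{b}]) = b$ then shows that the counit is an isomorphism, yielding full faithfulness. But Theorem~\ref{thm:Steiner_pol} is itself only quoted from \cite{Steiner}, so this route does not produce an independent proof either; it only makes explicit that the missing ingredient in your argument is Steiner's freeness theorem.
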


\begin{proof}
  This follows from \cite[Theorem 5.6 and Proposition 3.7]{Steiner}.
\end{proof}

\begin{theorem}[Steiner]\label{thm:Steiner_pol}
  For any strong Steiner complex $K$, the \oo-category $\nu(K)$ is freely
  generated, in the sense of polygraphs, by its atoms.
\end{theorem}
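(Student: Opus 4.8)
The plan is to pin down an explicit polygraph generating $\nu(K)$ and then to verify the universal property of Lemma~\ref{lemma:universal} by induction on dimension. Let $B = \coprod_{n \ge 0} B_n$ be the (unique) basis of $K$. I would construct, by simultaneous induction on $n$, a polygraph $S$ with $S_n = B_n$ together with an \oo-functor $\Phi : \free S \to \nu(K)$ sending each generator $b$ to its atom $\atom{b}$. In dimension $0$ one checks that $\nu(K)_0$ coincides with $B_0$: by unitality the augmentation takes the value $1$ on every element of $B_0$, so an element of $K_0^{\ast}$ with augmentation $1$ is a single basis element, whence $\Phi_0$ is a bijection. Assuming $\Phi$ defined and bijective on cells up to dimension $n-1$, one declares, for $b \in B_n$ with $n \ge 1$, the source and target of the generator $b$ in $S$ to be the unique $(n{-}1)$-cells of $\free S$ that $\Phi$ carries to $\source\atom{b}$ and $\target\atom{b}$; this makes sense exactly because of the inductive surjectivity, and the globular identities for the source and target maps of $S$ are transported back from those in $\nu(K)$ using the inductive injectivity. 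Lemma~\ref{lemma:universal} then extends $\Phi$ to dimension $n$ by $b \mapsto \atom{b}$, and to close the induction it remains to show that $\Phi$ is bijective on $n$-cells.

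Surjectivity of $\Phi$ in dimension $n$ says that every $n$-cell of $\nu(K)$ is an iterated composite of $n$-atoms and of units of lower cells. Presenting an $n$-cell $x$ as a table, its top entry $x_n^- = x_n^+$ lies in $K_n^{\ast}$, hence is an $\N$-linear combination $\sum_{b \in B_n} m_b\, b$ of basis elements. I would induct on $\sum_b m_b$: if this sum vanishes, $x$ is the unit of the lower cell obtained by deleting the top row, covered by the inductive hypothesis on dimension; otherwise, using the strong loop-free order $\le$, one picks a $\le$-maximal basis element $b$ occurring in the support of $x_n^-$ and excises it, writing $x = \atom{b} \comp{i} x'$ or $x = x' \comp{i} \atom{b}$ for a suitable $i < n$ and a cell $x'$ whose top entry has strictly smaller total coefficient; iterating yields the decomposition. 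This is the bookkeeping of Steiner's \emph{excision of extremals}.

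Freeness of $\nu(K)$ on $S$ --- equivalently, injectivity of $\Phi$ in dimension $n$ --- is the \emph{crux} and the main obstacle. Granted the surjectivity above, uniqueness in the universal property of Lemma~\ref{lemma:universal} is automatic, so what must be shown is that specifying the images of the $n$-generators $B_n$, compatibly with the already-defined $(n{-}1)$-functor, always yields a well-defined extension --- that is, that this assignment respects every relation that holds among the atoms inside $\nu(K)$. This is exactly where strong loop-freeness is indispensable: it forbids Eckmann--Hilton-type collapses, so that the only relations imposed in $\nu(K)$ are consequences of the \oo-category axioms and $\nu(K)$ is genuinely free on $S$. I would obtain this from Steiner's structure theory of strong Steiner complexes, in which an $n$-cell of $\nu(K)$ is encoded by its pair of negative and positive ``parts'' --- subsets of $B$ closed under passing to $\le$-predecessors along boundaries --- two cells having the same expression as a composite of atoms necessarily sharing the same parts, while the \oo-categorical operations act on parts exactly as addition acts on $\N$-linear combinations; this is the content of the relevant sections of \cite{Steiner}. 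As a sanity check, applying $\lambda$ to $\Phi$ recovers, under the canonical identifications $\lambda(\free S) \isom K$ and $\lambda\nu(K) \isom K$ (the latter holding because $K$ is a strong Steiner complex), the identity of $K$; this is consistent with $\Phi$ being an isomorphism but does not by itself establish it, which is why the decomposition-uniqueness input from \cite{Steiner} is essential. Assembling surjectivity and injectivity closes the induction, and $\Phi$ is the sought isomorphism exhibiting $\nu(K)$ as freely generated, in the sense of polygraphs, by the polygraph $S$ of its atoms.
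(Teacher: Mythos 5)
This theorem is Steiner's, and the paper's ``proof'' is nothing more than a citation of \cite[Theorem 6.1 and Proposition 3.7]{Steiner}; your proposal, after sketching the candidate polygraph of atoms and the decomposition of cells, likewise defers the decisive freeness/uniqueness step to Steiner's structure theory, so in substance it takes the same route as the paper. Your added outline is a reasonable pr\'ecis of how that argument goes (though ``excision of extremals'' is Street's terminology from parity complexes rather than Steiner's), and you rightly acknowledge that it does not by itself close the gap.
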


\begin{proof}
  This follows from \cite[Theorem 6.1 and Proposition 3.7]{Steiner}.
\end{proof}

\begin{remark}
  Steiner actually proved the two previous theorems for a more general
  class of complexes, where the strong loop-freeness condition is
  replaced
  by a weaker one (see \cite[Definition 3.5]{Steiner}).
\end{remark}

\subsection{A uniqueness result}

\begin{proposition}\label{prop:lambda_pol}
  If $S$ is a polygraph, then $\lambda(S^\ast)$ is free and its basis
  consists of the $[x]$, where $x$ varies among the generators of~$S$.
\end{proposition}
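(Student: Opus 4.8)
The plan is to prove the statement by induction on dimension, after reducing it to the truncations of $S$ and using that $\lambda$, being a left adjoint (the adjunction $\lambda \dashv \nu$ recalled above), preserves all colimits. Write $\freet n S$ for the free \oo-category generated by the $n$-truncation $\trunc n S$ of $S$. Since a $k$-cell of $\free S$ involves only generators of dimension at most $k$, the \oo-categories $\freet n S$ and $\free S$ have the same cells in dimensions $\le n$, so $\lambda(\freet n S)$ and $\lambda(\free S)$ coincide in degrees $\le n$, together with their positivity submonoids and their differentials; moreover $\free S$ is the filtered colimit of the $\freet n S$ along the canonical inclusions (every cell has finite dimension). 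Applying $\lambda$ and computing this colimit of augmented directed complexes degreewise, it is therefore enough to show, by induction on $n$, that $\lambda(\freet n S)$ is a free augmented directed complex concentrated in degrees $\le n$ whose basis is $\setbis{[x]}{x \in \coprod_{0 \le k \le n} S_k}$. For $n = 0$ this is immediate, since $\freet 0 S$ is the set $S_0$ regarded as a discrete \oo-category, so $\lambda(\freet 0 S)$ is concentrated in degree $0$, where it is $\Z^{(S_0)}$ with positivity monoid $\N^{(S_0)}$ and augmentation the sum of coefficients.

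For the induction step, recall that $\freet n S$ is obtained from $\freet{n-1}{S}$ by freely attaching the $n$-generators: by the standard free-cell-attachment presentation of free \oo-categories on polygraphs (cf.\ the functors $\Left n$ and Lemma~\ref{lemma:universal}, together with the fact that the canonical embedding $\ncat{n}\hookrightarrow\ocat$ preserves colimits), $\freet n S$ is a pushout in $\ocat$ of
\[
  \coprod_{x \in S_n} \glob_n \;\longleftarrow\; \coprod_{x \in S_n} \partial\glob_n \;\longrightarrow\; \freet{n-1}{S},
\]
where $\glob_n$ is the \oo-category freely generated by the globular set having two $k$-cells for each $k < n$, one $n$-cell, and nothing in higher dimensions, $\partial\glob_n$ is its $(n{-}1)$-dimensional boundary, the left leg is the coproduct of the boundary inclusions, and the $x$-th component of the right leg picks out the pair $(\source x, \target x)$ of $(n{-}1)$-cells of $\freet{n-1}{S}$. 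No two of the generating cells of $\glob_n$ are composable, so the only composites in $\glob_n$ are units; hence $\lambda(\glob_n)$ is the free augmented directed complex of rank $2$ in degrees $< n$, rank $1$ in degree $n$ and zero above, with differential $d[x] = [\target x] - [\source x]$ on basis elements, and $\lambda(\partial\glob_n)$ is its truncation below degree $n$. Since $\lambda$ is cocontinuous, the displayed square becomes a pushout in $\ADC$, and the map $\bigoplus_{S_n}\lambda(\partial\glob_n)\to\bigoplus_{S_n}\lambda(\glob_n)$ is, in each degree, the inclusion of a sub-basis; hence this pushout is obtained from $\lambda(\freet{n-1}{S})$ by freely adjoining, in each degree, the complementary basis elements — nothing below degree $n$, and one generator $[x]$ for each $x \in S_n$ in degree $n$ (identified with the image of $x$ under $\glob_n \xrightarrow{x} \freet n S$), whose differential is the image $[\target x] - [\source x]$ in $\lambda(\freet{n-1}{S})_{n-1}$. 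As $\lambda(\freet{n-1}{S})$ has no basis element in degree $n$ — indeed every $n$-cell of $\freet{n-1}{S}$ is a unit, and a unit $n$-cell $\unit v$ satisfies $\unit v = \unit v \comp{n-1} \unit v$, forcing $[\unit v] = 0$ — the result is free with basis exactly $\setbis{[x]}{x \in \coprod_{0 \le k \le n} S_k}$, completing the induction. Passing to the filtered colimit over $n$ then yields the proposition.

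The delicate point is the behaviour of colimits in $\ADC$: that the pushout along the maps $\bigoplus_{S_n}\lambda(\partial\glob_n)\to\bigoplus_{S_n}\lambda(\glob_n)$, as well as the filtered colimit over $n$, are computed degreewise \emph{including} the positivity submonoids $K^\ast_\bullet$, not merely on the underlying augmented chain complexes. This rests on two elementary observations: a coproduct in $\ADC$ is, degreewise, the direct sum of the underlying groups with positivity monoid the product of those of the summands; and a pushout, respectively a filtered colimit, of free augmented directed complexes taken along maps that are, in each degree, inclusions of a sub-basis is again free and is computed degreewise on the groups and on the monoids alike. Everything else — cocontinuity of $\lambda$, the pushout presentation of $\freet n S$, the computation of $\lambda(\glob_n)$, and the vanishing of the classes of unit cells — is routine.
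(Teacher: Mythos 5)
Your proof is correct, but it follows a genuinely different route from the paper's. The paper argues degree by degree: the obvious morphism $\Z^{(S_n)} \to \lambda(S^\ast)_n$ sending $e_x$ to $[x]$ is shown to be an isomorphism by producing an inverse, namely the generator-counting map $S^\ast_n \to \Z^{(S_n)}$ of \cite[paragraph 3.3]{MetResPol}, which sends compositions to sums and therefore factors through the quotient $\lambda(S^\ast)_n$; the statement about the positivity monoid is then immediate, since every $n$-cell of $S^\ast$ is a composite of $n$-generators and units. You instead run a skeletal induction: you present $\freet{n}{S}$ as a pushout of coproducts of globes along sphere inclusions, use that $\lambda$ is a left adjoint and hence cocontinuous, compute $\lambda$ of globes and spheres directly, and verify that the relevant pushouts (and the filtered colimit over $n$) of free complexes along degreewise sub-basis inclusions are computed degreewise, positivity monoids included. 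Your route is more structural and self-contained in spirit, and it makes the monoid part of the statement fully explicit; the price is length and the appeal to the free-cell-attachment pushout presentation of polygraphs, which this paper never states (it is standard, but you should cite it, e.g.\ from the references given for $\Left{n}$) --- whereas the paper's proof is a few lines but leans on the external linearization map from \cite{MetResPol}. Two small points to fix: in your description of coproducts in $\ADC$ the positivity monoid is the direct sum (coproduct) of the monoids of the summands, not their product, which matters when $S_n$ is infinite; and the general description of colimits in $\ADC$ is that the positivity submonoid of the colimit is the submonoid generated by the images of the positivity submonoids --- in your situation this does reduce to the degreewise computation you use, because the attaching maps send the basis elements of the spheres to classes of cells of $\freet{n-1}{S}$, which already lie in $\lambda(\freet{n-1}{S})^\ast$, but that reduction deserves the explicit sentence rather than being folded into the phrase ``computed degreewise''.
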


\begin{proof}
  Let $n \ge 0$. For every $x$ in $S_n$, we will denote by $e_x$ the
  corresponding element of the canonical basis of $\Z^{(S_n)}$.
  Consider the morphism $\gamma : \Z^{(S_n)} \to
  \lambda(S^\ast)_n$ defined by sending $e_x$ to~$[x]$, for every $x$ in
  $S_n$. We claim that this morphism is an isomorphism.
  Indeed, by \cite[paragraph 3.3]{MetResPol}, there exists a map $S^\ast_n
  \to \Z^{(S_n)}$ sending $x$ in $S_n$ to $[x]$ in $\Z^{(S_n)}$ and
  compositions in $S^\ast$ to sums. In particular, we get a morphism
  $\lambda(S^\ast)_n \to \Z^{(S_n)}$ sending $[x]$ in~$\lambda(S^\ast)_n$,
  for $x$ in~$S_n$, to $e_x$ in~$\Z^{(S_n)}$.  This morphism provides an
  inverse to $\gamma$.
\end{proof}

\begin{paragraph}
  Let $S$ be a polygraph.

  We say that a generator $x$ in $S_n$ is \ndef{atomic} if, for every $i$
  such that $0 \le i < n$, the supports of~$[\sce{x}_i]$ and $[\tge{x}_i]$
  are disjoint. The polygraph $S$ is said to be \ndef{atomic} if all its
  generators are atomic.

  The polygraph $S$ is \ndef{strongly loop-free} if there exists a partial
  order $\le$ on the generators of $S$ such that, for every $m \ge 0$ and $n
  \ge 0$, every $x$ in $S_m$ and $y$ in $S_n$, if
  \[
    {
    \setstretch{0}
    \begin{cases}
      & \text{$m \ge 1$ and $y$ belongs to the support of $[\tge x]$} \\
      \text{or} \\
      & \text{$n \ge 1$ and $x$ belongs to the support of $[\sce y]$}
    \end{cases}
    }
  \]
  then we have $x \le y$.

  Finally, $S$ is a \ndef{strong Steiner polygraph} if it is both atomic and
  strongly loop-free.
\end{paragraph}

Here is a reformulation of a result of Steiner based on \cite{AGOR}:

\begin{theorem}\label{thm:equiv_Steiner}
   The adjoint pair
   \[ \lambda : \ooCat \to \ADC \qquad \nu : \ADC \to \ooCat  \]
   induces an equivalence of categories between the full subcategory of
   $\ooCat$ consisting of \oo-categ\-ories freely generated by a strong Steiner
   polygraph and the full subcategory of $\ADC$ consisting of strong Steiner
   complexes.
 \end{theorem}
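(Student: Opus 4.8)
The plan is to deduce the theorem from Theorem~\ref{thm:Steiner}, Theorem~\ref{thm:Steiner_pol} and Proposition~\ref{prop:lambda_pol}. Since a fully faithful functor is an equivalence onto its essential image, Theorem~\ref{thm:Steiner} says that $\nu$ restricts to an equivalence from the full subcategory of $\ADC$ spanned by strong Steiner complexes onto its essential image $\mathcal E$ in $\ooCat$; it therefore suffices to identify $\mathcal E$ with the full subcategory $\mathcal D$ of \oo-categories freely generated by a strong Steiner polygraph (which is closed under isomorphism). I will check: (a) if $S$ is a strong Steiner polygraph, then $\lambda(\free S)$ is a strong Steiner complex; (b) if $K$ is a strong Steiner complex, then $\nu(K)$ is freely generated by a strong Steiner polygraph; and (c) if $S$ is a strong Steiner polygraph, then the unit $\munit:\free S\to\nu\lambda(\free S)$ is an isomorphism. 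Indeed, (b) gives $\mathcal E\subseteq\mathcal D$, while (a) and (c) show that every object of $\mathcal D$ is isomorphic to some $\nu(K)$ with $K$ strong Steiner, hence lies in $\mathcal E$; a quasi-inverse of $\nu|_{\mathcal E}$ is then supplied by $\lambda$, which by (a) sends $\mathcal D$ into the strong Steiner complexes, and unit and counit restrict to natural isomorphisms.

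The computation underlying everything is the following. For a polygraph $S$, the complex $\lambda(\free S)$ is free with basis $\{[x] : x\text{ a generator of }S\}$ by Proposition~\ref{prop:lambda_pol}. For $x$ in $S_n$ and $0<i<n$, the $i$-source cell $\sce x_i$ of $\free S$ has source $\sce x_{i-1}$ and target $\tge x_{i-1}$, so $d[\sce x_i]=[\tge x_{i-1}]-[\sce x_{i-1}]$ in $\lambda(\free S)$ (and likewise $d[x]=[\tge x_{n-1}]-[\sce x_{n-1}]$). Hence, if $S$ is atomic, a descending induction on $i$ yields $d^-_i([x])=[\sce x_i]$ and $d^+_i([x])=[\tge x_i]$ for all $0\le i<n$; in particular the table associated to $[x]$ has entries the $[\eps x_i]$ and $[x]$.

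With this in hand, (a) follows: $\lambda(\free S)$ is free; it is unital because $e(d^\e_0([x]))=e([\eps x_0])=1$, the cell $\eps x_0$ being an object and $\sce x_0\neq\tge x_0$ by atomicity; and it is strongly loop-free because, under atomicity, $\mathrm{supp}\,d^+([x])=\mathrm{supp}\,[\tge x]$ and $\mathrm{supp}\,d^-([y])=\mathrm{supp}\,[\sce y]$, so the partial order on the generators of $S$ witnessing strong loop-freeness transports along $x\mapsto[x]$. For (c): when $S$ is strong Steiner, $\lambda(\free S)$ is strong Steiner by (a), so by Theorem~\ref{thm:Steiner_pol} the \oo-category $\nu\lambda(\free S)$ is freely generated by its atoms, which by Proposition~\ref{prop:lambda_pol} are exactly the atoms $\atom{[x]}$ of the generators $x$ of $S$. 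The unit $\munit$ sends an $n$-cell $c$ of $\free S$ to the table with entries $[\eps c_i]$ and $[c]$, hence sends each generator $x$ to $\atom{[x]}$; by the computation above it is rigid (it sends generators to generators, bijectively by Proposition~\ref{prop:lambda_pol}, compatibly with source and target), and a rigid \oo-functor between \oo-categories freely generated by polygraphs that induces a bijection on generators is an isomorphism.

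Finally, (b): for a strong Steiner complex $K$ with basis $B$, Theorem~\ref{thm:Steiner_pol} gives that $\nu(K)$ is freely generated by the polygraph $S_K$ of its atoms, and one must see that $S_K$ is a strong Steiner polygraph. For $b$ in $B_n$, the $i$-source and $i$-target cells of $\atom b$ correspond, under the isomorphism $\lambda\nu(K)\isom K$ (valid since $\nu$ is fully faithful on strong Steiner complexes by Theorem~\ref{thm:Steiner}), to $d^-_i(b)$ and $d^+_i(b)$; these have disjoint supports by strong loop-freeness of $K$, so $S_K$ is atomic, and the partial order on $B$ witnessing strong loop-freeness of $K$ witnesses that of $S_K$. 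I expect the main obstacle to be exactly this bookkeeping—matching the polygraph-side conditions (phrased via the cells $\eps x_i$ and the supports of $[\eps x_i]$) with the complex-side conditions (phrased via $d^\pm_i$)—which rests on the identity $d^\e_i([x])=[\eps x_i]$ established above and, for (b), on the disjointness of the supports of $d^-_i(b)$ and $d^+_i(b)$ in a strong Steiner complex, a fact to be extracted carefully from Steiner's theory.
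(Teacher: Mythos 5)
Your route is genuinely different from the paper's: the paper disposes of Theorem~\ref{thm:equiv_Steiner} by citing \cite[Theorem~2.30]{AGOR} (itself based on \cite[Theorem~5.11]{Steiner}), whereas you try to derive it internally from Theorem~\ref{thm:Steiner}, Theorem~\ref{thm:Steiner_pol} and Proposition~\ref{prop:lambda_pol}. Your reduction to (a), (b), (c) is the right skeleton, and (a) and (c) are essentially sound (the identity $d^{\e}_i([x])=[\eps{x}_i]$ for atomic generators, the transfer of the partial order, and the ``rigid bijective on generators implies iso'' step all work; in (c) you do quietly use the explicit formula for the unit of Steiner's adjunction, which is not stated in the paper, but it is a standard part of the construction and harmless to quote).

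The genuine gap is in (b). The isomorphism $\lambda\nu(K)\simeq K$ does \emph{not} follow from Theorem~\ref{thm:Steiner}: full faithfulness of a right adjoint restricted to a full subcategory does not make the counit invertible at its objects (the forgetful functor from groups to sets is fully faithful on the full subcategory of trivial groups, yet the counit $F(\{*\})=\Z\to 1$ is not an isomorphism). To get invertibility of $\epsilon_K\colon\lambda\nu(K)\to K$ by a Yoneda-type argument inside the subcategory of strong Steiner complexes you would first need to know that $\lambda\nu(K)$ is itself strong Steiner, which via (a) amounts to the atomicity of the polygraph of atoms of $\nu(K)$ --- exactly what (b) is trying to prove, so the step as written is circular. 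Moreover, even an abstract isomorphism would not suffice: you use that it matches the class of the $i$-source of $\atom{b}$ with $d^-_i(b)$, i.e.\ you need the explicit counit of Steiner's adjunction (the class of a table is sent to its top entry), a description appearing neither in the paper nor in your text. Supplying that map, checking it is a well-defined morphism of $\ADC$ and that it carries the basis of atom classes bijectively onto $B$, is precisely what repairs (b), after which your whole argument closes up. Two smaller remarks: the point you flag as delicate --- disjointness of the supports of $d^-_i(b)$ and $d^+_i(b)$ --- is in fact automatic in any free augmented directed complex, since a descending induction gives $d(d^+_{i+1}(b))=d(d^-_{i+1}(b))$, so that $d^+_i(b)$ and $d^-_i(b)$ are the positive and negative parts of one and the same chain; and once (b) is fixed you do not need to argue the counit separately, since it then follows formally from the triangle identity, (b), (c) and Theorem~\ref{thm:Steiner}.
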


\begin{proof}
  This follows from \cite[Theorem 2.30]{AGOR}, based on \cite[Theorem
  5.11]{Steiner}.
\end{proof}

\begin{proposition}\label{prop:uniqueness}
  Let $S$ and $T$ be two polygraphs and let $f$ be a dimension-preserving
  bijection between the generators of~$S$ and the generators of $T$.
  Suppose that, for every $n \ge 1$ and every $x$ in $S_n$, we have
  \[
    f[\sce{x}] = [\sce{f(x)}]
    \quad\text{and}\quad
    f[\tge{x}] = [\tge{f(x)}].
  \]
  Suppose moreover that $S$ is a strong Steiner polygraph and that $T$ is
  atomic. Then $T$ is a strong Steiner polygraph and the map $f$ induces an
  isomorphism between $S^\ast$ and~$T^\ast$.
\end{proposition}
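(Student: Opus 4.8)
The plan is to transport the whole problem into Steiner's category $\ADC$ by means of the functor $\lambda$. By Proposition~\ref{prop:lambda_pol}, both $\lambda(\free S)$ and $\lambda(\free T)$ are free augmented directed complexes, with (unique) bases $\{[x]\mid x\in S_n,\ n\ge 0\}$ and $\{[y]\mid y\in T_n,\ n\ge 0\}$, and with $\lambda(\free S)^\ast_n=\N^{(S_n)}$, $\lambda(\free T)^\ast_n=\N^{(T_n)}$. Since $f$ is a dimension-preserving bijection of generators, sending $[x]$ to $[f(x)]$ and extending $\Z$-linearly yields a graded group isomorphism $\tilde f:\lambda(\free S)\to\lambda(\free T)$ that restricts to an isomorphism of the distinguished submonoids in each degree. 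The hypotheses $f[\sce{x}]=[\sce{f(x)}]$ and $f[\tge{x}]=[\tge{f(x)}]$ say \emph{precisely} that $\tilde f$ commutes with the differentials $d[x]=[\tge{x}]-[\sce{x}]$; and it commutes with the augmentations, both being ``sum of coefficients'' and $\tilde f$ permuting the degree-$0$ bases. Hence $\tilde f$ is an isomorphism in $\ADC$.

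Next I would show that $\lambda(\free T)$ is a strong Steiner complex and that $T$ is a strong Steiner polygraph. As $S$ is a strong Steiner polygraph, $\free S$ lies in the subcategory appearing in Theorem~\ref{thm:equiv_Steiner}, so $\lambda(\free S)$ is a strong Steiner complex; transporting along the isomorphism $\tilde f$ of $\ADC$, so is $\lambda(\free T)$, since freeness, unitality and strong loop-freeness are invariant under isomorphisms of augmented directed complexes (these conditions only involve the basis, the supports, $d$ and $e$, all respected by an $\ADC$-isomorphism). It remains to see that $T$, which is assumed atomic, is strongly loop-free. The key observation is that for an \emph{atomic} generator $y\in T_n$ with $n\ge 1$, atomicity in dimension $n-1$ says the supports of $[\sce{y}]$ and $[\tge{y}]$ are disjoint, so the canonical decomposition of $d[y]=[\tge{y}]-[\sce{y}]\in\Z^{(T_{n-1})}$ into parts with disjoint supports is exactly $d^{+}(y)=[\tge{y}]$ and $d^{-}(y)=[\sce{y}]$. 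Consequently, on generators of $T$ the support of $d^{+}(x)$ (resp.\ $d^{-}(y)$) equals the support of $[\tge{x}]$ (resp.\ $[\sce{y}]$), so the strong loop-freeness condition for the basis of $\lambda(\free T)$ is verbatim the strong loop-freeness condition for $T$. Since $\lambda(\free T)$ is strongly loop-free, $T$ is strongly loop-free, hence a strong Steiner polygraph. I expect this identification to be the main point of the proof; everything else is transport of structure through the cited theorems.

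Finally, both $\free S$ and $\free T$ now lie in the source of the equivalence of Theorem~\ref{thm:equiv_Steiner}, so $\lambda$ restricted there is fully faithful and conservative. Full faithfulness produces a unique \oo-functor $\bar f:\free S\to\free T$ with $\lambda(\bar f)=\tilde f$, and conservativity makes $\bar f$ an isomorphism, as $\tilde f$ is one. To see that $\bar f$ is induced by $f$, note that for a generator $x\in S_n$ we have $[\bar f(x)]=\lambda(\bar f)[x]=\tilde f[x]=[f(x)]$, which is a single basis element of $\lambda(\free T)$; but an $n$-cell of a free \oo-category whose class in $\lambda$ is a single basis element $[f(x)]$ is built out of one copy of the generator $f(x)$ together with units, and hence collapses to $f(x)$ by the unit laws. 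Therefore $\bar f(x)=f(x)$ for every generator $x$, so $\bar f:\free S\xrightarrow{\ \sim\ }\free T$ is the isomorphism induced by $f$, completing the proof.
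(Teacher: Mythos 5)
Your main line of argument is essentially the paper's: you transfer everything to $\ADC$, observe that the hypotheses say exactly that $[x]\mapsto[f(x)]$ commutes with the differentials (and augmentations), so that by Proposition~\ref{prop:lambda_pol} it gives an isomorphism $\tilde f:\lambda(S^\ast)\to\lambda(T^\ast)$, then check that $T$ is strongly loop-free and conclude with Theorem~\ref{thm:equiv_Steiner}. Your route to loop-freeness of $T$ --- transport the strong Steiner property of the complex along $\tilde f$ and then use atomicity of $T$ to identify $d^-$, $d^+$ of a generator with $[\sce{\cdot}]$, $[\tge{\cdot}]$ --- is correct, just slightly more roundabout than the paper's, which transports the partial order directly at the polygraph level: the polygraph-level condition only involves the operations $z\mapsto[\sce z]$ and $z\mapsto[\tge z]$ on generators, and these are preserved by $f$ by hypothesis, so atomicity is not even needed for this step.

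The one step that fails is the justification, in your last paragraph, that $\bar f(x)=f(x)$ on generators. It is not true that an $n$-cell of a free \oo-category whose $\lambda$-class is a single basis element must collapse to the corresponding generator: units of positive-dimensional cells have class zero, so whiskerings give counterexamples. For instance, if $\tau$ is a $2$-generator with $\tge\tau_0=b$ and $u:b\to c$ is a $1$-generator, then the $2$-cell $u\comp 0\tau=\unit u\comp 0\tau$ has class $[\unit u]+[\tau]=[\tau]$, yet it is not equal to $\tau$ (its $0$-target is $c$, not $b$). Note that this only affects a refinement that the paper does not claim in this form: there, ``induces'' means that the isomorphism $S^\ast\simeq T^\ast$ is the one corresponding to $\tilde f$ under the equivalence of Theorem~\ref{thm:equiv_Steiner}, which is all that is used afterwards, so you could simply drop the last claim. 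If you do want the on-generators statement, the repair is to use that $\bar f$ is an \emph{isomorphism}: generators of a polygraphic free \oo-category can be characterized intrinsically (they are the indecomposable non-unit cells, the generating polygraph being unique), so $\bar f$ sends generators to generators, and since the classes of generators are linearly independent by Proposition~\ref{prop:lambda_pol}, the equality $[\bar f(x)]=[f(x)]$ then forces $\bar f(x)=f(x)$.
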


\begin{proof}
  If $x$ is in $S_n$, we have
  \[
     fd[x] = f([\tge{x}] - [\sce{x}]) = f[\tge{x}] - f[\sce{x}]
     = [\tge{f(x)}] - [\sce{f(x)}] = d\,\!f[x]
  \]
  and, by Proposition~\ref{prop:lambda_pol}, the map $f$ defines an
  isomorphism from $\lambda(S^\ast)$ to $\lambda(T^\ast)$. Using the
  previous theorem, to conclude the proof, it thus suffices to show that $T$
  is strongly loop-free. But being strongly loop-free only depends on the
  generators and on the operations $z \mapsto [\sce{z}]$ and $z \mapsto
  [\tge{z}]$, where $z$ is a generator. Since the bijection $f$ is compatible
  with these, we get the result.
\end{proof}

\subsection{Uniqueness of orientals}

We shall now give a ``linear characterization'' of $\norient{n}$, aiming at
proving it is isomorphic to Street's oriental. We saw in
paragraph~\ref{paragr:generators-On} that the $m$-generators of
$\norient{n}$ correspond to the injections $\deltan{m} \emb \deltan{n}$. We
now describe the linear source and target of such a generator:

\begin{proposition}\label{prop:lin_st_On}
  Fix $n \ge -1$. For every $m \ge 1$ and every $m$-generator $x$
  of~$\norient{n}$ considered as an injection $x : \deltan{m} \emb
  \deltan{n}$, we have
    \[
       [\sce{x}] = \sum_{\substack{0 \le i \le m\\\text{\rm $i$ odd}}} [x\faced{i}{m}]
       \quad\text{and}\quad
       [\tge{x}] = \sum_{\substack{0 \le i \le m\\\text{\rm $i$ even}}}
       [x\faced{i}{m}].
    \]
\end{proposition}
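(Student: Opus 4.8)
The plan is to proceed by induction on $n$, using the inductive description of $\norient{n}$ as $\expan{\norient{n-1}}$ together with the oriental calculus of Section~\ref{sec:calculus}. The key observation is that an $m$-generator of $\norient{n}$ is either a shifted generator $\shift{s}$, where $s$ is an $m$-generator of $\norient{n-1}$ (i.e.\ an injection not hitting $0$), or an expanded generator $\sch{0}{\shift{s}} = \ch{\shift{s}}$, where $s$ is an $(m{-}1)$-generator of $\norient{n-1}$ (i.e.\ an injection hitting $0$). In the first case, since $\munit$ is rigid, $[\sce{\shift{s}}] = \munit[\sce{s}]$ and the claim transports directly from $\norient{n-1}$ along the face maps, using that $\munit$ realizes the coface $\faced{0}{n}$ and commutes with precomposition by faces. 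So the substance is the second case.

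For the expanded case, write $x = \ch{y}$ with $y = \shift{s}$ an $(m{-}1)$-generator, viewed as an injection $y : \deltan{m-1} \emb \deltan{n}$ with $0 \notin \operatorname{im}(y)$; then $x$, as an $m$-generator, corresponds to the injection $\deltan{m} \emb \deltan{n}$ sending $0 \mapsto 0$ and $j \mapsto y(j-1)$ for $j \ge 1$, so that $x\faced{0}{m} = y$ and $x\faced{i}{m} = \ch{y\faced{i-1}{m-1}}$ for $1 \le i \le m$ — this last identity being precisely the content of the simplicial notation: applying $\ch{-}$ to $y$ and then taking the $(i{-}1)$-th face of the result in dimension $m-1$ recovers the $i$-th face of $\ch{y}$. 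I will record this combinatorial bookkeeping first. Then I compute $[\sce{\ch y}]$ and $[\tge{\ch y}]$ from the chevron source/target formula of paragraph~\ref{paragr:chevron}:
\[
\ch{y} : \ch{\tge{y}_{m-2}} \to y \comp{0} \ch{\sce{y}_0} \comp{1} \cdots \comp{m-2} \ch{\sce{y}_{m-2}},
\]
so that, passing to $\lambda$, we get $[\sce{\ch y}] = [\ch{\tge{y}_{m-2}}]$ and $[\tge{\ch y}] = [y] + \sum_{j=0}^{m-2}[\ch{\sce{y}_j}]$, since $\lambda$ sends compositions to sums.

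Now I unfold these using the inductive hypothesis applied to $y$ in $\norient{n}$ (legitimate since $y$ is a generator of $\norient{n-1} \subset \norient{n}$, already covered by the induction, or more carefully by a secondary induction on $m$). The iterated source $[\tge{y}_{m-2}]$ expands, via repeated application of the formulas for $\sce{\tge y}$ and $\tge{\tge y}$ in $\lambda$ (i.e.\ $d^-, d^+$ in Steiner's language), into an alternating-parity sum of iterated faces $[y\faced{i_1}{m-1}\faced{i_2}{m-2}\cdots]$; applying the chevron on top reindexes each such term as $[\ch{y}\,\faced{i_1+1}{m}\cdots]$, and a parity-counting argument — the key being that $\ch{-}$ shifts face indices up by one, flipping odd$\leftrightarrow$even at the top level — will show that the resulting collection of codimension-one faces is exactly $\sum_{i \text{ odd}}[x\faced{i}{m}]$. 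The term $[y] = [x\faced{0}{m}]$ together with the $[\ch{\sce y_j}]$ terms assemble, by the same parity accounting, into $\sum_{i \text{ even}}[x\faced{i}{m}]$. I expect this parity bookkeeping — matching the alternating signs/parities in the iterated $d^\pm$ expansion of $[\tge y_{m-2}]$ and $[\sce y_j]$ against the even/odd face partition after the index shift — to be the main obstacle; it is essentially the combinatorial identity underlying the simplicial boundary, and the cleanest route may be to verify it abstractly in $\lambda(\norient{n})$ (or even in a free ADC) using that the boundary of a face, re-expressed, telescopes, rather than by a brute-force sign chase. The base cases $n = -1$ (vacuous) and $m = 1$ (where $\ch{y}$ with $y$ a $0$-generator gives $\ch{y} : \orig \to y$, i.e.\ $[\sce{\ch y}] = [\orig] = [x\faced{1}{1}]$ and $[\tge{\ch y}] = [y] = [x\faced{0}{1}]$) are immediate.
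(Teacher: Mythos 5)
Your skeleton coincides with the paper's: induction on $n$, shifted generators handled by transport along the rigid unit $\eta : \norient{n-1} \emb \norient{n}$, and the substance concentrated on the expanded generators $x = \ch{y}$ with $y = \eta(s) = x\faced{0}{m}$, using the reindexing $x\faced{i}{m} = \ch{y\faced{i-1}{m-1}}$ for $1 \le i \le m$ (this is exactly the identity the paper uses, there written $\econe_{\eta(y)\faced{i}{m-1}} = \econe_{\eta(y)}\faced{i+1}{m}$). However, in the expanded case your argument has a concrete error and a genuine gap. The error: the formula $[\tge{\ch{y}}] = [y] + \sum_{j=0}^{m-2}[\ch{\sce{y}_j}]$ is false as an identity in $\lambda(\norient{n})_{m-1}$. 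In the composite $y \comp 0 \ch{\sce{y}_0} \comp 1 \cdots \comp{m-2} \ch{\sce{y}_{m-2}}$ the factors $\ch{\sce{y}_j}$ with $j < m-2$ have dimension $j+1 < m-1$ and enter only as iterated units, and $[z] = 0$ in $\lambda$ whenever $z$ is a unit; the correct statement is $[\tge{\ch{y}}] = [y] + [\ch{\sce{y}}]$ (together with $[\sce{\ch{y}}] = [\ch{\tge{y}}]$). This vanishing of unit classes is not a cosmetic point: it is what makes the whole computation close, and the paper states it explicitly.

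The gap: you never justify the step ``applying the chevron on top reindexes each such term.'' The cells $\sce{y}$ and $\tge{y}$ are composites, not generators, and the chevron of a composite is not the composite of the chevrons --- it is given by the oplax functoriality axiom, with whiskerings. What is needed is that $z \mapsto [\econe_z]$ descends to a $\Z$-linear map $\lambda(\norient{n})_k \to \lambda(\norient{n})_{k+1}$; this is the decisive ingredient in the paper's proof (quoted from the proof of Theorem~6.1 of \cite{MetResPol}, and provable from the functoriality axioms plus the vanishing of units just mentioned), and it is absent from your plan. Once you have it, the induction hypothesis applied to $y$ gives in one step $[\ch{\tge{y}}] = \sum_{i\ \text{even}} [\ch{y\faced{i}{m-1}}] = \sum_{i\ \text{even}} [x\faced{i+1}{m}]$ and $[\ch{\sce{y}}] = \sum_{i\ \text{odd}} [x\faced{i+1}{m}]$, and adding $[y] = [x\faced{0}{m}]$ finishes the proof; the parity flip is nothing more than the shift $i \mapsto i+1$. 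Consequently your proposed expansion of $[\tge{y}_{m-2}]$ into iterated faces with a ``parity-counting argument'' is misdirected: $\tge{y}_{m-2}$ is just the codimension-one target of $y$, so only one application of the induction hypothesis is needed, and the termwise application of $\ch{-}$ that your bookkeeping presupposes is precisely the unproved linearity. In short, what you flagged as the main obstacle is the easy combinatorial step, while the actual missing ideas are the linearity of $[\econe_{(-)}]$ on $\lambda$ and the vanishing of unit classes.
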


\begin{proof}
  We prove the result by induction on $n$. The assertion is clear if $n= -1$ or
  $n = 0$. Suppose $n > 0$. Let $m \ge 1$ and let $x = \smp{i_0, \dots,
  i_m}$ be a generator of $\norient{n}$ (see
  paragraph~\ref{paragr:generators-On}).
  \begin{enumerate}
    \item Suppose first that $i_0 \neq 0$. This means that $x = \eta(y)$,
    with $y =\smp{i_0-1, \dots, i_m-1}$ an $m$-generator of $\norient{n-1}$,
    where $\eta : \norient{n-1} \emb \norient{n}$ is the \oo-functor coming
    from the fact that $\norient{n}$ is the free expansion on
    $\norient{n-1}$. By induction, we have
    \[
       [\sce{y}] = \sum_{\substack{0 \le i \le m\\\text{$i$ odd}}} [y\faced{i}{m}]
       \quad\text{and}\quad
       [\tge{y}] = \sum_{\substack{0 \le i \le m\\\text{$i$ even}}}
       [y\faced{i}{m}],
    \]
    so that
    \[
      \begin{split}
        [\sce{x}]
        & = [\sce{\eta(y)}]
        = [\eta(\sce y)]
        = \lambda(\eta)[\sce y]
        \\
        & =
        \lambda(\eta)\Big(\sum_{\substack{0 \le i \le m\\\text{$i$ odd}}}
        [y\faced{i}{m}]\Big)
        =
        \sum_{\substack{0 \le i \le m\\\text{$i$ odd}}}
        [\eta(y\faced{i}{m})]
        \\
        &
        =
        \sum_{\substack{0 \le i \le m\\\text{$i$ odd}}}
        [\eta(y)\faced{i}{m}]
        =
        \sum_{\substack{0 \le i \le m\\\text{$i$ odd}}}
        [x\faced{i}{m}],
      \end{split}
    \]
    whence the desired formula, and similarly for $[\tge{x}]$.
    \item Suppose now that $i_0 = 0$. This means
    \[ x = \econe_{\eta(y)}, \]
    with $y =\smp{i_1-1, \dots, i_m-1}$ an $(m-1)$-generator of
    $\norient{n-1}$, where $\econe$ is the expansion of~$\norient{n}$.
    In particular,
    \[ \eta(y) = \smp{i_1, \dots, i_m} = x\faced{0}{m}. \]
    \begin{enumerate}
      \item If $m = 1$, so that $x = \smp{0, i}$, then $\eta(y) = \smp{i}$
      and
      \[ \econe_{\eta(y)} : \smp{0} \to \smp{i}. \]
      Thus
      \[
      [\sce{x}] = [\smp{0}] = [x\faced{1}{m}]
       \quad\text{and}\quad
      [\tge{x}] = [\smp{1}] = [x\faced{0}{m}],
      \]
      whence the result.
      \item If $m > 1$, then
      \[
      \econe_{\eta(y)} : \econe_{\tge{\eta(y)}_{m-1}} \!\! \to \eta(y) \comp 0
      \econe_{\sce{\eta(y)}_0} \comp 1 \cdots \comp{m-1} \econe_{\sce{\eta(y)}_{m-1}}, \]
      so that
      \[
        [\sce{x}] = [\econe_{\tge{\eta(y)}_{m-1}}]
        = [\econe_{\tge{\eta(y)}}]
        = [\econe_{\eta(\tge y)}]
        \]
        and
        \[
        [\tge{x}] = [\eta(y)] + [\econe_{\sce{\eta(y)}_{m-1}}]
        = [\eta(y)] + [\econe_{\eta(\sce{y})}],
      \]
      since $[z] = 0$ if $z$ is an identity.
      To be able to use this, we will need the fact that the oplax
      transformation $\econe$ induces a $\Z$-linear map (and actually even a
      chain homotopy)
      \[
        \begin{split}
        \lambda(\econe) :
          \lambda(\norient{n})_k & \to \lambda(\norient{n})_{k+1} \\
          [z] & \mapsto [\econe_z]
        \end{split}
      \]
      for every $k \ge 0$ (see the proof of Theorem~6.1 of
        \cite{MetResPol}). Now by induction, we have
      \[
       [\sce{y}] = \sum_{\substack{0 \le i \le m-1\\\text{$i$ odd}}}
         [y\faced{i}{m-1}]
       \quad\text{and}\quad
       [\tge{y}] = \sum_{\substack{0 \le i \le m-1\\\text{$i$ even}}}
         [y\faced{i}{m-1}],
      \]
      so that
      {
      \allowdisplaybreaks
      \begin{align*}
      [\sce{x}]
      & =
      [\econe_{\eta(\tge{y})}]
      =
      \lambda(\econe)\lambda(\eta)[\tge{y}]
      =
      \lambda(\econe)\lambda(\eta)
      \Big(
      \sum_{\substack{0 \le i \le m-1\\\text{$i$ even}}} [y\faced{i}{m-1}]
      \Big)
      \\
      &
      =
      \lambda(\econe)
      \Big(
      \sum_{\substack{0 \le i \le m-1\\\text{$i$ even}}}
      [\eta(y\faced{i}{m-1})]
      \Big)
      =
      \lambda(\econe)
      \Big(
      \sum_{\substack{0 \le i \le m-1\\\text{$i$ even}}}
      [\eta(y)\faced{i}{m-1}]
     \Big)
      \\
      &
      =
      \sum_{\substack{0 \le i \le m-1\\\text{$i$ even}}}
         [\econe_{\eta(y)\faced{i}{m-1}}]
      =
      \sum_{\substack{0 \le i \le m-1\\\text{$i$ even}}}
         [\econe_{\eta(y)}\faced{i+1}{m}]
      \\*
      & \phantom{= 1} \text{(the equality
      $\econe_{\eta(y)\faced{i}{m-1}} = \econe_{\eta(y)}\faced{i+1}{m}$
      being more transparent}
      \\*
      & \phantom{= 1 (} \text{under the form
      $\smp{0, \smp{i_1, \dots,
      i_m}\faced{i}{m-1}} = \smp{0, i_1, \dots, i_m}\faced{i+1}{m}$)} \\
      &
      =
      \sum_{\substack{0 \le i \le m-1\\\text{$i$ even}}}
         [x\faced{i+1}{m}]
=
      \sum_{\substack{1 \le j \le m\\\text{$j$ odd}}}
         [x\faced{j}{m}]
      =
      \sum_{\substack{0 \le j \le m\\\text{$j$ odd}}}
         [x\faced{j}{m}]
      \end{align*}
      }%
      as wanted. Similarly, one gets that
      \[
      [\econe_{\eta(\sce{y})}] =
      \sum_{\substack{1 \le j \le m\\\text{$j$ even}}}
         [x\faced{j}{m}]
      \]
      and hence that
      \[
      [\tge{x}] = [\eta(y)] + [\econe_{\eta(\sce{y})}]
      = [x\faced{0}{m}] + \sum_{\substack{1 \le j \le m\\\text{$j$ even}}}
      [x\faced{j}{m}]
      =
      \sum_{\substack{0 \le j \le m\\\text{$j$ even}}}
         [x\faced{j}{m}],
      \]
      thereby ending the proof.
      \qedhere
  \end{enumerate}
  \end{enumerate}
\end{proof}

\begin{proposition}
  For every $n \ge -1$, the polygraph defining $\norient{n}$ is atomic.
\end{proposition}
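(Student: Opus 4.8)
Here is how I would prove that the polygraph defining $\norient{n}$ is atomic.

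The plan is to argue by induction on $n$; the cases $n = -1$ and $n = 0$ are trivial, the polygraph having no generator, resp.\ a single $0$-generator (atomic vacuously). Fix $n \ge 1$. By paragraphs~\ref{paragr:construct_free_exp} and~\ref{paragr:def_expan_S}, the polygraph defining $\norient{n} = \expan{\norient{n-1}}$ has, besides the origin $\orig$, exactly two kinds of generators: the \emph{shifts} $\munit(y)$ of the generators $y$ of $\norient{n-1}$, and the \emph{chevrons} $\econe_{\munit(y)} = \ch{\munit(y)}$, one for each generator $y$ of $\norient{n-1}$, of dimension one higher than $y$. I would show each is atomic, assuming inductively that every generator of $\norient{n-1}$ is. Throughout I would use the $\Z$-linear map $\lambda(\econe)\colon \lambda(\norient{n})_k \to \lambda(\norient{n})_{k+1}$, $[z] \mapsto [\econe_z]$, from the proof of Proposition~\ref{prop:lin_st_On} (see the proof of Theorem~6.1 of~\cite{MetResPol}); the chevron axioms of paragraph~\ref{paragr:chevron} give at once the relations
\[
  d\lambda(\econe)[z] = [z] - \lambda(\econe)d[z] \quad\text{if } \dim z \ge 1,
  \qquad
  d\lambda(\econe)[w] = [w] - [\orig] \quad\text{if } \dim w = 0.
\]
The crucial bookkeeping point, following from paragraph~\ref{paragr:generators-On}, is that a generator $\smp{i_0, \dots, i_k}$ of $\norient{n}$ has $0$-source $\orig$ precisely when $i_0 = 0$, that $\munit$ never produces such a generator, and that on the other generators $\lambda(\econe)$ acts by $[\smp{i_0, \dots, i_k}] \mapsto [\smp{0, i_0, \dots, i_k}]$ while sending those with $i_0 = 0$ to $0$ (since then $\econe_{\smp{0, \dots}}$ is an identity by the degeneracy condition $\cch u = \unit{\ch u}$). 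Hence $\lambda(\econe)\lambda(\munit)$ is injective on generators, its image consists of generators with $0$-source $\orig$, and on supports it corresponds to the injection $T \mapsto T \cup \{0\}$ of vertex sets.

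The shift case is immediate: if $x = \munit(y)$, then $[\sce{x}_i] = \lambda(\munit)[\sce{y}_i]$ and $[\tge{x}_i] = \lambda(\munit)[\tge{y}_i]$ for all $i < \dim x$, and since $\lambda(\munit)$ is injective on generators, the supports of $[\sce{x}_i]$ and $[\tge{x}_i]$ are disjoint because those of $[\sce{y}_i]$ and $[\tge{y}_i]$ are, by the induction hypothesis.

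For the chevron case, write $x = \ch{s}$ with $s = \munit(y)$, and put $m = \dim x = \dim s + 1$. Iterating the identity $\SCE\ch z = \ch{\tge z}$ of paragraph~\ref{paragr:chevron} (bottoming out at $\SCE\ch w = \orig$ for $w$ of dimension $0$) and using the globular identities gives $\sce{x}_i = \ch{\tge{s}_{i-1}}$ for $1 \le i \le m-1$ and $\sce{x}_0 = \orig$. Feeding this, the globular relation $[\tge{x}_i] = [\sce{x}_i] + d[\sce{x}_{i+1}]$, the relations above for $\lambda(\econe)$, and $d[\tge{s}_i] = [\tge{s}_{i-1}] - [\sce{s}_{i-1}]$ into one another, one obtains
\[
  [\sce{x}_i] = \lambda(\econe)\lambda(\munit)[\tge{y}_{i-1}],
  \qquad
  [\tge{x}_i] = \lambda(\munit)[\tge{y}_i] + \lambda(\econe)\lambda(\munit)[\sce{y}_{i-1}]
\]
for $1 \le i \le m-1$, while $[\sce{x}_0] = [\orig]$ and $[\tge{x}_0] = \lambda(\munit)[\tge{y}_0]$. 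Now the bookkeeping point applies: the support of $[\sce{x}_i]$ consists of vertex sets containing $0$, namely the sets $T \cup \{0\}$ with $T$ in the support of $\lambda(\munit)[\tge{y}_{i-1}]$; the support of $[\tge{x}_i]$ is the disjoint union of the support of $\lambda(\munit)[\tge{y}_i]$ (vertex sets \emph{not} containing $0$) and of the sets $T \cup \{0\}$ with $T$ in the support of $\lambda(\munit)[\sce{y}_{i-1}]$. Therefore $\operatorname{supp}[\sce{x}_i] \cap \operatorname{supp}[\tge{x}_i]$ is carried bijectively, by $T \cup \{0\} \mapsto T$ followed by $\lambda(\munit)^{-1}$, onto $\operatorname{supp}[\tge{y}_{i-1}] \cap \operatorname{supp}[\sce{y}_{i-1}]$, which is empty since $i-1 < \dim y$ and $y$ is atomic by induction. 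For $i = 0$, $\sce{x}_0 = \orig$ whereas $\tge{x}_0 = \munit(\tge{y}_0)$ is a vertex different from $\orig$, so these supports are disjoint too. Hence $x$ is atomic, completing the induction.

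The main obstacle is establishing the two chevron formulas for the iterated sources and targets of $\ch{s}$ --- especially the identity $[\tge{(\ch s)}_i] = [\tge{s}_i] + \lambda(\econe)[\sce{s}_{i-1}]$, which is much cleaner to extract from the chain-homotopy relation for $\lambda(\econe)$ than to read off the iterated composite $\tge{(\ch s)} = s \comp 0 \ch{\sce{s}_0} \comp 1 \cdots \comp{m-2} \ch{\sce{s}_{m-2}}$ at the level of cells --- and then keeping scrupulous track of which vertex sets contain the origin $0$ and which do not, since it is exactly this dichotomy between the ``shift'' part and the ``chevron'' part of source and target that forces their supports to be disjoint.
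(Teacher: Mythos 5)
Your proof is correct and takes essentially the same route as the paper's: the same split of the generators of $\norient{n}=\expan{\norient{n-1}}$ into shifted generators (immediate by injectivity) and chevrons, the same iterated source formula $\sce{x}_i=\ch{\tge{s}_{i-1}}$, and the same disjointness mechanism, namely that the source and the chevron part of the target are supported on the ``new'' generators (those with $0$-source $\orig$), where disjointness reduces to the atomicity of $y$ in $\norient{n-1}$, while the rest of the target is supported on shifted generators. The only cosmetic differences are that the paper proves the more general statement that $\expan S$ is atomic for every atomic polygraph $S$ and reads the iterated targets directly off the cell-level formula $\tge{(r_y)}_i=\tge{y}_i\comp 0 r_{\sce y_0}\comp 1\cdots\comp{i-1}r_{\sce y_{i-1}}$, whereas you specialize to orientals and extract the same linear formulas from the chain-homotopy relation for $\lambda(\econe)$.
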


\begin{proof}
  We will prove more generally that if $S$ is an atomic polygraph, then so
  is the polygraph~$\expan S$ of paragraph~\ref{paragr:def_expan_S}.
  The result will then follow by induction as the polygraph defining
  $\norient{n}$ is obtained by iterating this construction from the empty
  polygraph, which is atomic.

  Let thus $S$ be an atomic polygraph. Consider a generator $x$
  of $\expan S$ of dimension $n \ge 1$.
  \begin{itemize}
    \item If $x = \eta(y)$ for $y$ a generator of $S$, where $\inc : S \to
    \expan S$ is the canonical morphism, then, as $y$ is atomic by
    hypothesis, so is $x$, as $\eta$ is injective on cells.

    \item Otherwise, $x = r_y$ for $y$ a generator of $S$, with the notation
    of paragraph~\ref{paragr:def_expan_S}. If $n = 1$, then
    \[ 
      \sce{(r_y)} = \orig
      \qquad\text{and}\qquad
      \tge{(r_y)} = y,
    \]
    where $\orig$ is the origin of $\expan S$. If $n > 1$, using the
    formulas
    \[
      \sce{(r_y)}= r_{\tge y_{n-2}}
      \qquad\text{and}\qquad
      \tge{(r_y)}= y\comp 0 r_{\sce y_0}\comp 1 \cdots
      \comp{n-2} r_{\sce y_{n-2}},
    \]
    where $y$ was identified with $\eta(y)$, we get by induction that, for
    $i$ such that $0 < i < n$,
    \[
      \sce{(r_y)}_i = r_{\tge y_{i-1}}
      \qquad\text{and}\qquad
      \tge{(r_y)}_i = \tge{y}_i \comp 0 r_{\sce y_0}\comp 1 \cdots
      \comp{i-1} r_{\sce y_{i-1}},
    \]
    and that
    \[
      \sce{(r_y)}_0 = \orig
      \qquad\text{and}\qquad
      \tge{(r_y)}_0 = \tge{y}_0.
    \]
    The supports of $\sce{(r_y)}_i$ and $\tge{(r_y)}_i$, for $0 \le i < n$,
    are thus disjoint and $x$ is atomic, whence the result.
    \qedhere
  \end{itemize}
\end{proof}

\begin{proposition}
  Fix $n \ge -1$ and let $S$ be an atomic polygraph such that
  \begin{enumerate}
    \item for every $m \ge 0$, we have $S_m = \{x : \deltan{m} \emb
    \deltan{n}\mid \text{$x$ injective and order-preserving}\}$,
    \item for every $m \ge 1$ and every $x : \deltan{m} \emb \deltan{n}$ in
    $S_m$, we have
    \[
       [\sce{x}] = \sum_{\substack{0 \le i \le m\\\text{\rm $i$ odd}}} [x\faced{i}{m}]
       \quad\text{and}\quad
       [\tge{x}] = \sum_{\substack{0 \le i \le m\\\text{\rm $i$ even}}}
       [x\faced{i}{m}].
    \]
  \end{enumerate}
  Then $S^\ast$ is canonically isomorphic to $\norient{n}$.
\end{proposition}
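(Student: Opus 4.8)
The plan is to derive the statement from Proposition~\ref{prop:uniqueness}. Denote by $P$ the polygraph defining $\norient{n}$ (the iterated expansion of the empty polygraph described in paragraph~\ref{paragr:def_expan_S}), so that $P^\ast = \norient{n}$. First I would set up the comparison map: by paragraph~\ref{paragr:generators-On} the $m$-generators of $P$ are canonically in bijection with the injective order-preserving maps $\deltan{m}\emb\deltan{n}$, and by hypothesis~(1) so are the $m$-generators of $S$; this yields a dimension-preserving bijection $f : P \to S$ between the sets of generators, compatible with the simplicial indexing and hence with precomposition by cofaces. Moreover, by Proposition~\ref{prop:lin_st_On}, an $m$-generator $x$ of $P$ with $m \ge 1$, viewed as an injection $\deltan{m}\emb\deltan{n}$, has $[\sce{x}] = \sum_{i\ \mathrm{odd}}[x\faced{i}{m}]$ and $[\tge{x}] = \sum_{i\ \mathrm{even}}[x\faced{i}{m}]$, while by hypothesis~(2) the same formulas hold in $S$; since $f$ is compatible with cofaces, it follows that $f[\sce{x}] = [\sce{f(x)}]$ and $f[\tge{x}] = [\tge{f(x)}]$ for every such $x$.

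Next, $S$ is atomic by assumption and $P$ is atomic by the previous proposition, so the only hypothesis of Proposition~\ref{prop:uniqueness} left to check is that one of $S$, $P$ is a strong Steiner polygraph; given the atomicity, this amounts to strong loop-freeness. By Proposition~\ref{prop:lambda_pol} together with the boundary formulas above, the free augmented directed complex $\lambda(P^\ast)$ has the injections $\deltan{m}\emb\deltan{n}$ for basis and differential $d[x] = \sum_{i}(-1)^i[x\faced{i}{m}]$, whence $d^-([x]) = [\sce{x}]$ and $d^+([x]) = [\tge{x}]$; thus $\lambda(P^\ast)$ is precisely the augmented directed complex of the standard $n$-simplex, and strong loop-freeness of the polygraph $P$ coincides with strong loop-freeness of the complex $\lambda(P^\ast)$. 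The latter complex is a strong Steiner complex, in particular strongly loop-free, which is classical (see \cite{Steiner}, \cite{SteinerOrient}, \cite{AGOR}); concretely the required order may be taken to be the one generated by the relations $x\le x\faced{i}{m}$ for $i$ even and $y\faced{j}{k}\le y$ for $j$ odd. Alternatively, one may prove directly, in the spirit of the previous proposition's proof, that $\expan{S}$ is strongly loop-free whenever $S$ is, and then conclude by induction starting from the empty polygraph.

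With strong loop-freeness in hand, $P$ (and, through $f$, also $S$) is a strong Steiner polygraph, so Proposition~\ref{prop:uniqueness} applied to $f : P \to S$, with $P$ strong Steiner and $S$ atomic, shows that $f$ induces an isomorphism $P^\ast \isom S^\ast$, that is, a canonical isomorphism $\norient{n} \isom S^\ast$ (canonical because $f$ is the simplicial-indexing bijection). I expect the main obstacle to be the strong loop-freeness step: granting that the standard $n$-simplex is a strong Steiner complex makes it immediate, but a self-contained treatment requires either pinning down and verifying a suitable partial order on the faces of $\deltan{n}$, or checking that the expansion construction preserves strong loop-freeness — this is the only genuinely combinatorial point.
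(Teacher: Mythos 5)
Your proposal is correct and follows the paper's overall strategy --- reduce everything to Proposition~\ref{prop:uniqueness} together with the fact, borrowed from Steiner's Example~3.8 in \cite{Steiner}, that the augmented directed complex of the standard simplex is a strong Steiner complex --- but it differs in which polygraph is made to play the ``strong Steiner'' role. The paper keeps Steiner's own polygraph (the one associated to his complex $\Delta[n]$) as a third comparison object and, in effect, applies Proposition~\ref{prop:uniqueness} with it on the strong Steiner side against each of the two atomic polygraphs (the one defining $\norient{n}$ and the given $S$); it explicitly remarks that one \emph{could} instead show that the polygraph defining $\norient{n}$ is itself strong Steiner, but declines to do so. You carry out exactly that alternative: using Proposition~\ref{prop:lambda_pol} and Proposition~\ref{prop:lin_st_On} you identify $\lambda(\norient{n})$ with Steiner's $\Delta[n]$, observe that for an injection $x$ the faces $x\faced{i}{m}$ are pairwise distinct (so the supports of $[\sce{x}]$ and $[\tge{x}]$ are disjoint and $d^{\pm}[x]=[\tge{x}],[\sce{x}]$), and transport strong loop-freeness from the complex back to the polygraph; this lets you apply Proposition~\ref{prop:uniqueness} just once, directly to the bijection between the polygraph of $\norient{n}$ and $S$. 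Both routes lean on the same external input (loop-freeness of the simplex complex, which neither you nor the paper verifies from scratch; your candidate order generated by $x \le x\faced{i}{m}$ for $i$ even and $x\faced{j}{m} \le x$ for $j$ odd encodes the right relations but its antisymmetry is precisely the unchecked combinatorial point, as you acknowledge). The trade-off is that your version needs the small but genuine transfer argument through $\lambda$, while the paper's version needs to know that Steiner's polygraph has generators indexed by injections with the prescribed linearized boundaries; both are sound, and your write-up of the transfer step is correct.
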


\begin{proof}
  By paragraph~\ref{paragr:generators-On} and
  Proposition~\ref{prop:lin_st_On}, these two properties are satisfied by
  the polygraph defining
  $\norient{n}$, which is atomic by the previous proposition.
  To get the result, using Proposition~\ref{prop:uniqueness}, 
  it thus suffices to produce a strong Steiner polygraph $S$ that satisfies
  these two properties. We could prove that the polygraph defining
  $\norient{n}$ does the job but it is simpler to refer to Steiner: the
  polygraph associated to the complex $\Delta[n]$ of \cite[Example
  3.8]{Steiner} satisfies these conditions.
\end{proof}

\begin{theorem}
  The cosimplicial object $\orient : \Delta \to \ooCat$ of
  Definition~\ref{def:orientals} is canonically isomorphic to the
  cosimplicial objects of orientals as introduced by Street in
  \cite{StreetOrient}.
\end{theorem}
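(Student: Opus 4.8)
The plan is to combine the previous proposition --- which characterizes $\norient n$ as $\free S$ for \emph{any} atomic polygraph $S$ whose $m$-generators are the injective order-preserving maps $\deltan m\emb\deltan n$ and whose linearized sources and targets are the alternating face sums --- with the explicit description of the cosimplicial structure in Proposition~\ref{prop:cosimplicial}. Write $\orient^{\mathrm{St}}:\scat\to\ocat$ for Street's cosimplicial object and $\norient n^{\mathrm{St}}=\orient^{\mathrm{St}}(\deltan n)$. The first step is to observe that $\norient n^{\mathrm{St}}$ is, by construction in \cite{StreetOrient} (see also \cite{SteinerOrient} and \cite[Example~3.8]{Steiner}), the \oo-category freely generated by the polygraph of the parity complex of the standard $n$-simplex: its $m$-generators are the $m$-faces of $\Delta^n$, i.e.\ the injective order-preserving maps $x:\deltan m\emb\deltan n$; this polygraph is atomic; and the linearized source and target of such an $x$ are
\[
  [\sce x]=\sum_{\substack{0\le i\le m\\ i\text{ odd}}}[x\faced{i}{m}]
  \qquad\text{and}\qquad
  [\tge x]=\sum_{\substack{0\le i\le m\\ i\text{ even}}}[x\faced{i}{m}],
\]
these being the odd and even parts of the simplicial boundary $d=\sum_{i}(-1)^{i}\faced{i}{m}$.

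Granting this, the previous proposition applies to this polygraph and yields, for every $n\ge0$, an isomorphism $\phi_n:\norient n\rto\norient n^{\mathrm{St}}$. By the way it is produced --- via Proposition~\ref{prop:uniqueness} from the identity isomorphism between the linearizations $\lambda(\norient n)$ and $\lambda(\norient n^{\mathrm{St}})$, using that both \oo-categories are freely generated by their atoms (Theorems~\ref{thm:Steiner_pol} and~\ref{thm:equiv_Steiner}) --- this $\phi_n$ is rigid and is ``the identity on generators'': it sends the generator of $\norient n$ indexed by an injection $x:\deltan m\emb\deltan n$ (notation $\smp{i_0,\dots,i_m}$ of paragraph~\ref{paragr:generators-On}) to the generator of $\norient n^{\mathrm{St}}$ indexed by the same $x$.

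It remains to check that $\phi=(\phi_n)_{n\ge0}$ is a morphism of cosimplicial objects, i.e.\ that for every order-preserving $\psi:\deltan n\to\deltan{n'}$ the square
\[
  \xymatrix{
    \norient n \ar[r]^-{\phi_n} \ar[d]_{\orient(\psi)} & \norient n^{\mathrm{St}} \ar[d]^{\orient^{\mathrm{St}}(\psi)} \\
    \norient{n'} \ar[r]_-{\phi_{n'}} & \norient{n'}^{\mathrm{St}}
  }
\]
commutes. As $\norient n$ is freely generated by a polygraph, it suffices to compare the two composites on a generator $x:\deltan m\emb\deltan n$. By Proposition~\ref{prop:cosimplicial}, $\orient(\psi)(x)=\smp{\psi(i_0),\dots,\psi(i_m)}$: this is the generator $\psi x$ if $\psi x$ is injective, and otherwise an explicit unit cell. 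On the other side, the cosimplicial operators of $\orient^{\mathrm{St}}$ act on generators by post-composition of faces (\cite{StreetOrient}), so $\orient^{\mathrm{St}}(\psi)$ sends $x$ to the same $\psi x$, respectively to the corresponding unit. Since $\phi_n$ and $\phi_{n'}$ are the identity on generators and \oo-functors preserve units, the two composites agree on $x$; hence the square commutes and $\phi:\orient\rto\orient^{\mathrm{St}}$ is an isomorphism of cosimplicial objects.

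The main obstacle is the first step: establishing, uniformly in $n$, that Street's $n$-th oriental is freely generated by an \emph{atomic} polygraph having precisely the generators and alternating-sum linearized boundaries demanded by the previous proposition, and that the cosimplicial operators of $\orient^{\mathrm{St}}$ act on these generators by post-composition of faces. Once these facts about Street's construction are secured (from \cite{StreetOrient}, \cite{SteinerOrient}, \cite{Steiner}), the per-object comparison is immediate from the previous proposition and the naturality check is the routine verification on generators sketched above.
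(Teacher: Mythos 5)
The object-level half of your argument is essentially the paper's own proof: one quotes Street for the fact that his $n$-th oriental is freely generated by an atomic polygraph whose $m$-generators are the injections $\deltan{m}\emb\deltan{n}$ with the alternating-sum linearized boundaries, and then the preceding proposition produces the isomorphisms $\phi_n:\norient{n}\rto\norient{n}^{\mathrm{St}}$; your observation that $\phi_n$ is generator-preserving (being induced, via Proposition~\ref{prop:uniqueness} and Theorem~\ref{thm:equiv_Steiner}, by the identity bijection on generators) is correct and worth making explicit.

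The gap is in the naturality step, which you describe as a routine check on generators. For an \emph{injective} $\psi$ both sides indeed send the generator $x$ to the generator $\psi x$, but for a general order-preserving $\psi$ your argument needs two facts that are established neither in this paper nor, in this form, in Street: (i) a precise identification of the cell $\smp{\psi(i_0),\dots,\psi(i_m)}$ when the sequence has repetitions --- the paper only records one example showing it is \emph{a} unit, not a general formula saying which (iterated) unit of which generator it is; and (ii) a statement that Street's codegeneracy operators send each atom to exactly that unit cell --- Street's cosimplicial operators are not defined by ``post-composition on generators'', and extracting such a cell-level description of their effect on atoms is a genuine combinatorial argument, essentially the one this section is built to avoid. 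The paper's proof bypasses both points: by Theorem~\ref{thm:equiv_Steiner} the functor $\lambda$ is faithful on \oo-categories freely generated by strong Steiner polygraphs, so it suffices that the two squares commute after applying $\lambda$; there every unit cell is sent to $0$ and every generator to the corresponding basis element, so the comparison collapses to Proposition~\ref{prop:cosimplicial} on one side and the simplicial chain-level description of Street's operators (his Section~5) on the other. To repair your proof, either supply the two identifications (i) and (ii), or replace the cell-level naturality check by this linearization-plus-faithfulness argument.
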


\begin{proof}
  By \cite[Section 3 and Corollary 4.2]{StreetOrient}, the $n$-th oriental
  defined by Street satisfies the conditions of the previous proposition.
  This shows that the two cosimplicial objects agree on objects. To show
  that they also agree on morphisms, by using
  Theorem~\ref{thm:equiv_Steiner}, it suffices to show that they agree after
  applying $\lambda : \ooCat \to \ADC$. This follows from Proposition
  \ref{prop:cosimplicial} and~\cite[Section 5]{StreetOrient}.
\end{proof}

\bibliographystyle{alpha}
\bibliography{biblio}

\begin{thebibliography}{AGRO23}

\bibitem[AGRO23]{AGOR}
D.~Ara, A.~Gagna, M.~Rovelli, and V.~Ozornova.
\newblock A categorical characterization of strong {S}teiner
  $\omega$-categories.
\newblock {\em J. Pure Appl. Algebra}, 227(7):24, 2023.

\bibitem[AM15]{AraMaltsiCondE}
D.~Ara and G.~Maltsiniotis.
\newblock Le type d'homotopie de la $\infty$-cat{\'e}gorie associ{\'e}e {\`a}
  un complexe simplicial.
\newblock Preprint, 2015.

\bibitem[AM20]{AraMaltsiJoint}
D.~Ara and G.~Maltsiniotis.
\newblock Joint et tranches pour les {$\infty$}-cat\'{e}gories strictes.
\newblock {\em M\'{e}m. Soc. Math. Fr. (N.S.)}, (165):vi + 213, 2020.

\bibitem[AR94]{AdamRos}
J.~Ad{\'a}mek and J.~Rosick{\'y}.
\newblock {\em Locally presentable and accessible categories}, volume 189 of
  {\em London Mathematical Society Lecture Note Series}.
\newblock Cambridge University Press, 1994.

\bibitem[BG16]{BucGar}
M.~Buckley and R.~Garner.
\newblock Orientals and cubes, inductively.
\newblock {\em Adv. Math.}, 303:175--191, 2016.

\bibitem[Bur93]{BurHighdim}
A.~Burroni.
\newblock Higher-dimensional word problems with applications to equational
  logic.
\newblock {\em Theoret. Comput. Sci.}, 115(1):43--62, 1993.

\bibitem[Bur00]{BurOxford}
A.~Burroni.
\newblock A new calculation of the orientals of {S}treet.
\newblock Slides of a talk given in the 3rd annual meeting of {\emph{Linear
  Logic in Computer Science}}, Oxford, April 2000.

\bibitem[Bur05]{BurOrientCahiers}
A.~Burroni.
\newblock Une autre approche des $\omega$-categories.
\newblock {\em Cah. Topol. G\'eom. Diff\'er. Cat\'eg.}, 46(3):185--186, 2005.

\bibitem[Bur07]{BurOrientPreprint}
A.~Burroni.
\newblock Une autre approche des orientaux.
\newblock Preprint, 2007.

\bibitem[Lai79]{LairMonad}
C.~Lair.
\newblock Condition syntaxique de triplabilit\'e d'un foncteur alg\'ebrique
  esquiss\'e.
\newblock {\em Diagrammes}, 1:CL1--CL16, 1979.

\bibitem[LM09]{LafMetPolRes}
Y.~Lafont and F.~M{\'e}tayer.
\newblock Polygraphic resolutions and homology of monoids.
\newblock {\em J. Pure Appl. Algebra}, 213(6):947--968, 2009.

\bibitem[LMW10]{LafMetWorFolk}
Y.~Lafont, F.~M{\'e}tayer, and K.~Worytkiewicz.
\newblock A folk model structure on omega-cat.
\newblock {\em Advances in Math.}, 224(3):1183--1231, 2010.

\bibitem[M{\'e}t03]{MetResPol}
F.~M{\'e}tayer.
\newblock Resolutions by polygraphs.
\newblock {\em Theory Appl. Categ.}, 11:No. 7, 148--184, 2003.

\bibitem[M{\'e}t08]{MetCofib}
F.~M{\'e}tayer.
\newblock Cofibrant objects among higher-dimensional categories.
\newblock {\em Homology, Homotopy Appl.}, 10(1):181--203, 2008.

\bibitem[ML98]{MacLane}
S.~Mac~Lane.
\newblock {\em Categories for the working mathematician}, volume~5 of {\em
  Graduate Texts in Mathematics}.
\newblock Springer-Verlag, second edition, 1998.

\bibitem[Ste04]{Steiner}
R.~Steiner.
\newblock Omega-categories and chain complexes.
\newblock {\em Homology Homotopy Appl.}, 6(1):175--200, 2004.

\bibitem[Ste07]{SteinerOrient}
R.~Steiner.
\newblock Orientals.
\newblock In {\em Categories in algebra, geometry and mathematical physics},
  volume 431 of {\em Contemp. Math.}, pages 427--439. Amer. Math. Soc., 2007.

\bibitem[Str87]{StreetOrient}
R.~Street.
\newblock The algebra of oriented simplexes.
\newblock {\em J. Pure Appl. Algebra}, 49(3):283--335, 1987.

\bibitem[Str91]{StreetParComp}
R.~Street.
\newblock Parity complexes.
\newblock {\em Cah. Topol. Géom. Différ. Catég.}, 32(4):315--343, 1991.

\bibitem[Str94]{StreetParCompCorr}
R.~Street.
\newblock Parity complexes: corrigenda.
\newblock {\em Cah. Topol. Géom. Différ. Catég.}, 35(4):359--361, 1994.

\end{thebibliography}

\end{document}